\documentclass[10pt, reqno]{amsart}
\usepackage{graphicx, amssymb, amsmath, amsthm}
\numberwithin{equation}{section}
\usepackage{xcolor}
\usepackage{amscd}
\usepackage{amssymb}
\usepackage{latexsym}
\usepackage{tikz}
\usetikzlibrary{arrows.meta,calc}

\usepackage{amsmath}
\usepackage{amscd}
\usepackage{cite}
\usepackage{color}
\usepackage{enumerate}
\usepackage{amsfonts}
\usepackage{graphicx}
\usepackage{mathrsfs}
\usepackage{setspace}
\usepackage{hyperref}

\let\Re=\undefined\DeclareMathOperator*{\Re}{Re}
\let\Im=\undefined\DeclareMathOperator*{\Im}{Im}

\newcommand{\qtq}[1]{\quad\text{#1}\quad}
\newcommand{\R}{\mathbb{R}}
\newcommand{\C}{\mathbb{C}}

\newcommand{\eps}{\varepsilon}

\newcommand{\F}{\mathcal{F}}

\newtheorem{theorem}{Theorem}[section]

\newtheorem{lemma}[theorem]{Lemma}

\newtheorem{corollary}[theorem]{Corollary}

\newtheorem{proposition}[theorem]{Proposition}

\theoremstyle{definition}
\newtheorem{definition}[theorem]{Definition}
\newtheorem{remark}[theorem]{Remark}

\title[Threshold and blow-threshold dynamics]{Threshold and Below-Threshold Dynamics for the Energy-Critical NLS: The Higher-Kinetic-Energy Regime}

	\author[Z. Ma]{Zuyu Ma}
	\address[Z. Ma]{  The Graduate School of China Academy of Engineering Physics,
		Beijing 100088,\ P. R. China}
	\email{mazuyu23@gscaep.ac.cn}

\author[C. Miao]{Changxing Miao}
\address[C. Miao]{School of Mathematics and Physics, University of Science and Technology Beijing, Beijing 100083, China}
\email{miao\_changxing@ustb.edu.cn, miao\_changxing@iapcm.ac.cn}

\author[K. Yang]{Kai Yang}
\address[K. Yang]{School of Mathematics, SouthEast University, Nanjing, P.R. China 211189}
\email{kaiyang@seu.edu.cn}

\author[X. Zhang]{Xiaoyi Zhang}
\address[X. Zhang]{Department of Mathematics, University of Iowa, Iowa City, IA 52242}
\email{xiaoyi-zhang@uiowa.edu}

\author[J. Zheng]{Jiqiang Zheng}
\address[J. Zheng]{Institute of Applied Physics and Computational Mathematics and National Key Laboratory of Computational Physics, Beijing 100088, China}
\email{zheng\_jiqiang@iapcm.ac.cn, zhengjiqiang@gmail.com}

\begin{document}
\subjclass[2020]{35Q55}
\keywords{energy-critical, NLS, threshold dynamics, blow-up}
\begin{abstract}
{
We study the focusing energy-critical nonlinear Schr\"odinger equation below and at the ground-state energy, focusing on solutions in the upper energy cone, characterized by
\[
E(u)\le E(W),\qquad
\|\nabla u\|_{L_x^2}>\|\nabla W\|_{L_x^2},
\]
where \(W\) is the ground state. Unlike the lower cone, where compactness of critical solutions plays a central role in understanding the dynamics, such compactness is generally unavailable on the upper branch. Consequently, previous results have often required additional assumptions, such as finite mass or finite variance. In this paper, we remove the finite-mass assumption in the radial setting and give a complete characterization of the upper-cone dynamics in the natural energy space \(\dot H^1_{\mathrm{rad}}\), resolving an open problem posed by Duyckaerts and Merle\cite{DM08}. In particular, this establishes finite-time blow-up of the special threshold solution \(W^+\) in the negative time direction in dimensions \(d=3,4\). For non-radial data in dimensions \(d\ge5\), we show that {for solutions in the interior of the upper cone, the limit superior of their $\dot H^1$-norm satisfies an explicit lower bound as time approaches each endpoint of the maximal lifespan,} while forward-global solutions on the upper threshold sheet that remain below this level must coincide with \(W^+\) modulo symmetries. We also obtain a complete classification of upper-threshold dynamics for finite-variance data in all dimensions \(d\ge3\).}
\end{abstract}

\maketitle

\section{Introduction}

We consider the focusing energy-critical nonlinear Schr\"odinger equation 
\begin{align} \label{1.1}\tag{NLS}
\qquad \qquad (i\partial_t+\Delta)u= -|u|^{\frac4{d-2}} u,\quad (t,x)\in\R\times\R^d,
\end{align}
in dimensions $d\geq 3$. Equation \eqref{1.1} is the Hamiltonian flow associated with the (conserved) energy
\begin{equation}\label{equ:energydef}
E(u(t))=\tfrac12\int_{\R^d}|\nabla u(t,x)|^2\;dx-\tfrac{d-2}{2d}\int_{\R^d}|u(t,x)|^{\frac{2d}{d-2}}\;dx.
\end{equation}

For initial data $u|_{t=0}=u_0$, we understand a solution $u(t,x)$ to be a maximal-lifespan mild solution on the interval 
$$
I=(T_-,T_+),\qquad -\infty\leq T_-<0<T_+\leq\infty,
$$ 
such that 
$$
u\in C(I; \dot H^1)\cap L_{t_{\textit{loc}},x}^{\frac{2(d+2)}{d-2}}(I\times \mathbb R^d)
$$  
and $u$ satisfies the usual Duhamel formula. For a time interval $J\subset I$, we define the scattering size by
\[
S_J(u)=\|u\|_{L_{t,x}^{\frac{2(d+2)}{d-2}}(J\times\mathbb R^d)}^{\frac{2(d+2)}{d-2}}.
\]
We say $u$ blows up at $T_{\pm}$ if $S_{(0,T_+)}(u)=\infty$ (or $S_{(T_-,0)}(u)=\infty$). For a solution that is global forward or backward in time, blow-up means non-scattering in that time direction. 

A central role in the dynamics of \eqref{1.1} is played by the \emph{ground state}, which is the optimizer of the sharp Sobolev inequality and is given explicitly by 
\begin{equation}
\label{defW}
W(x):= \biggl(1+\tfrac{|x|^2}{d(d-2)}\biggr)^{-\frac{d-2}{2}}.
\end{equation}
The ground state satisfies the Euler-Lagrange equation
\begin{equation}\label{equ:CP}
\Delta W+|W|^\frac{4}{d-2}W=0.
\end{equation}
By the phase, scaling, and spatial translation symmetries of \eqref{1.1}, $W$ generates the $(d+2)$-dimensional ground-state orbit $\mathcal W$:
\begin{equation}\label{groundm}
\mathcal W=\ \bigl\{ e^{i\theta}\lambda^{-\frac{d-2}2}W\biggl(\frac{x-x_0}\lambda\biggr)
,\ (\theta,\lambda,x_0)\in \mathbb R\times\mathbb R^+\times\mathbb R^d\bigr\}. 
\end{equation}

We consider solutions with energy at or below the ground-state energy, $E(u)\le E(W)$. Since $W$ is a critical point of $E$ and, modulo $(d+2)$ symmetry directions, $E''(W)$ has one negative direction and is positive in the remaining transverse directions, the threshold energy surface $E(u)=E(W)$ has a local double-cone structure near the ground-state orbit $\mathcal W$. Motivated by this local geometry, and in order to describe the dynamics below and at the threshold, we regard $\{u: E(u)=E(W)\}$ as a double cone whose vertex set is $\mathcal W$. The upper sheet is characterized by 
$$\|\nabla u\|_{L_x^2}>\|\nabla W\|_{L_x^2},
$$ 
whereas the lower sheet is characterized by 
$$
\|\nabla u\|_{L_x^2}<\|\nabla W\|_{L_x^2}.
$$
We use the same terminology below the threshold: the interior of the upper cone consists of functions satisfying
$$
E(u)<E(W),\qquad  \|\nabla u\|_2>\|\nabla W\|_2,
$$ 
while the interior of the lower cone is characterized by 
$$
E(u)<E(W), \qquad \|\nabla u\|_2<\|\nabla W\|_2. 
$$
By the variational characterization of the ground state, together with conservation of energy and continuity of \(t\mapsto \|\nabla u(t)\|_2\), the upper and lower sheets of the threshold surface, as well as the interiors of the upper and lower cones, are invariant under the \eqref{1.1} flow. More precisely, if \(u(t_0)\) belongs to one of these regions at some time \(t_0\), then \(u(t)\) remains in the same region throughout its lifespan.

Figure 1 illustrates this double-cone terminology.

\begin{figure}[htbp]
\centering
\resizebox{0.78\textwidth}{!}{%
\begin{tikzpicture}[>=Latex,font=\small]

\definecolor{upperblue}{RGB}{120,160,220}
\definecolor{lowerred}{RGB}{220,140,140}

\coordinate (O) at (0,0);

\def\topY{3.6}
\def\topA{3.2}
\def\topB{0.55}

\def\botY{-3.6}
\def\botA{3.2}
\def\botB{0.55}

\draw[thick,->] (0,-5.0) -- (0,5.0);
\node[above] at (0,5.05) {kinetic energy};

\fill[upperblue!25]
  (-\topA,\topY) --
  (0,0) --
  (\topA,\topY) --
  plot[domain=0:180,smooth] ({\topA*cos(\x)},{\topY+\topB*sin(\x)})
  -- cycle;

\draw[thick,blue!70!black]
  (-\topA,\topY) -- (0,0) -- (\topA,\topY);

\draw[thick,blue!70!black] (0,\topY) ellipse ({\topA} and {\topB});
\draw[dashed,blue!50!black] (0,\topY) ellipse ({0.82*\topA} and {0.42*\topB});

\fill[lowerred!25]
  (-\botA,\botY) --
  (0,0) --
  (\botA,\botY) --
  plot[domain=0:-180,smooth] ({\botA*cos(\x)},{\botY+\botB*sin(\x)})
  -- cycle;

\draw[thick,red!75!black]
  (-\botA,\botY) -- (0,0) -- (\botA,\botY);

\draw[thick,red!75!black] (0,\botY) ellipse ({\botA} and {\botB});
\draw[dashed,red!55!black] (0,\botY) ellipse ({0.82*\botA} and {0.42*\botB});

\draw[thick,red!80!black,fill=white] (0,0) ellipse (0.28 and 0.08);

\node at (0,2.0) {upper interior cone};
\node at (0,-2.1) {lower interior cone};

\draw[->,thick] (2.25,1.75) -- (1.55,2.15);
\node[anchor=west,align=left] at (2.3,1.75) {upper threshold\\sheet / surface};

\draw[->,thick] (2.25,-1.95) -- (1.55,-2.35);
\node[anchor=west,align=left] at (2.3,-1.95) {lower threshold\\sheet / surface};

\draw[->,thick] (1.8,0.15) -- (0.35,0.03);
\node[anchor=west] at (1.85,0.15) {ground-state manifold $\mathcal W$};

\draw[dashed] (-2.35,0) -- (-0.38,0);
\node[anchor=east,align=center] at (-2.5,0)
{ground-state\\kinetic level\\[1mm]
$\displaystyle E_{\mathrm{kin}}=E_0$};

\draw[thick,<->] (-3.3,1.0) -- (-3.3,2.8);
\node[anchor=east,align=center] at (-3.55,1.9)
{higher kinetic energy\\[1mm]
$\displaystyle E_{\mathrm{kin}}>E_0$};

\draw[thick,<->] (-3.3,-1.0) -- (-3.3,-2.8);
\node[anchor=east,align=center] at (-3.55,-1.9)
{lower kinetic energy\\[1mm]
$\displaystyle E_{\mathrm{kin}}<E_0$};

\end{tikzpicture}
}
\caption{\small{Schematic of the double-cone terminology: $E_{kin}(u)= \|\nabla u\|_{L_x^2}^2$ and $E_0= \|\nabla W\|_{L_x^2}^2$. The cone boundaries denote $E(u)=E(W)$; the shaded interiors denote $E(u)<E(W)$.}} 
\label{fig:doublecone}
\end{figure}

The dynamics in the lower cone are now well understood. For radial solutions in dimensions $d=3,4,5$, Kenig and Merle~\cite{kenig-merle1} proved that the data in the interior of the lower cone generate global solutions that scatter in both time directions. Killip and
Vi\c{s}an~\cite{KV1} subsequently removed the radial assumption in $d\ge 5$, and Dodson~\cite{dodson1} established the corresponding result in dimension $d=4$. Very recently, {Su-Zhao~\cite{SZ} and Chung-Han ~\cite{CH} announced two different proofs of the remaining non-radial three-dimensional case. }Thus, subject to this recent result, the scattering theory in the interior of the lower cone is complete in every dimension $d\ge 3$.

On the threshold surface \(E(u)=E(W)\), the dynamics are more delicate because of the stable and unstable directions of the linearized flow around the ground-state orbit \(\mathcal W\). At \(W\), there is a one-dimensional stable manifold whose two branches are represented by two special solutions \(W^+\) and \(W^-\) up to time translation, and both solutions converge exponentially to \(W\) as \(t\to+\infty\). The solution \(W^+\) lies on the upper sheet of the energy cone, while \(W^-\) lies on the lower sheet. The corresponding unstable manifold is obtained by time reversal, and the stable and unstable manifolds associated with the other elements of \(\mathcal W\) are obtained by applying the \((d+2)\)-parameter symmetries of the equation.

The existence of \(W^\pm\) and the classification of threshold dynamics were initiated by Duyckaerts and Merle~\cite{DM08} and subsequently extended in~\cite{LZ09,CFR22,MMMZ25}. Thus, unlike the interior of the lower cone, where scattering gives the dominant behavior, the threshold surface contains special non-scattering trajectories attached to the ground-state orbit. These exceptional solutions play a central role in the dynamics on the upper sheet and will be particularly relevant to the results of this paper.

Our main interest is the upper branch of the energy cone. We first recall the known dynamics on the two branches, including the recent three-dimensional scattering result \cite{SZ,CH}.

For convenience, we say that ``$u=v$ modulo symmetries" if
\[
 u(t,x)=e^{i\theta}\lambda^{-\frac{d-2}2}
 v\left(\frac{t-T}{\lambda^2},\frac{x-y}{\lambda}\right),
 \]
 or
 \[
 u(t,x)=e^{i\theta}\lambda^{-\frac{d-2}2}
 \overline {v\left(\frac{T-t}{\lambda^2},\frac{x-y}{\lambda}\right)}
 \]
for some $(\lambda,\theta,y,T)\in(0,\infty)\times\R\times\R^d\times\R$.

\begin{theorem}[Known results on the lower branch, \cite{kenig-merle1, KV1, dodson1, SZ, CH, DM08, LZ09, CFR22, MMMZ25}] \label{thm:KM} Let $d\ge 3$ and let $u_0\in \dot H^1(\mathbb R^d)$ satisfy $$\|\nabla u_0\|_{L_x^2}<\|\nabla W\|_{L_x^2}.$$ Then

\begin{enumerate}
\item[(i)] If $E(u_0)<E(W)$, the corresponding solution is global and scatters in both time directions. 

\item[(ii)] If $E(u_0)=E(W)$, in dimension $d=3$ we further assume that $u_0$ is radial. Then the corresponding solution is global and either scatters in both time directions or satisfies $u=W^-$ modulo symmetries. 
\end{enumerate}
\end{theorem}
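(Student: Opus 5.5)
This theorem collects known results, so the plan is to recall the architecture of the cited proofs rather than build a new argument. The first ingredient is the variational analysis around the sharp Sobolev inequality
\[
\|f\|_{L^{\frac{2d}{d-2}}}\le C_d\|\nabla f\|_{L^2},
\]
whose optimizer is $W$. Combined with conservation of energy and continuity of $t\mapsto\|\nabla u(t)\|_2$, it shows two things. If $E(u_0)<E(W)$ and $\|\nabla u_0\|_2<\|\nabla W\|_2$, then $\|\nabla u(t)\|_2^2\le (1-\delta)\|\nabla W\|_2^2$ on the whole lifespan, with $\delta>0$ depending on $E(W)-E(u_0)$. Moreover, the coercivity quantity $\|\nabla u\|_2^2-\|u\|_{2d/(d-2)}^{2d/(d-2)}$ is bounded below by $c\,\|\nabla u\|_2^2$. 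In particular, the solution is global by the $\dot H^1$-subcritical-in-time blow-up criterion together with this uniform bound. This step handles invariance of the region.

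For (i), I would run the Kenig--Merle concentration-compactness/rigidity scheme. Suppose scattering fails below the threshold. Using the linear profile decomposition and the stability theory, one extracts a minimal-energy blow-up solution $u_c$ whose orbit is precompact in $\dot H^1$ modulo the symmetries $(\lambda(t),x(t))$. One then excludes such $u_c$ with a localized virial identity, whose time derivative is controlled from below by the coercivity above. The rigidity step is the main obstacle. It is exactly where dimension and radiality enter, and I would cite the relevant literature for each case:
\begin{itemize}
\item Kenig--Merle in the radial case $d=3,4,5$, where $x(t)\equiv 0$ and $\lambda(t)$ is handled by mass/virial arguments;
\item Killip--Vi\c{s}an for $d\ge5$, using additional decay (negative regularity) of almost periodic solutions to control the translation parameter;
\item Dodson for $d=4$, via long-time Strichartz and frequency-localized interaction Morawetz estimates;
\item Su--Zhao and Chung--Han for $d=3$, in the non-radial case.
\end{itemize}

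For (ii), on the threshold $E(u_0)=E(W)$, I would follow Duyckaerts--Merle. One first shows that if the solution does not scatter in some time direction, then, after running the compactness argument at the threshold, it is almost periodic in that direction. One then proves that $d(u(t))=\bigl|\|\nabla W\|_2^2-\|\nabla u(t)\|_2^2\bigr|$ tends to $0$ exponentially, via a modulated virial estimate whose integral in time is bounded by $d(u)$. Finally, by the spectral analysis of the linearized operator around $W$, one concludes that $u$ lies on the stable manifold, hence equals $W^-$ modulo symmetries, since $\|\nabla u\|<\|\nabla W\|$ selects the lower branch.

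Globality on the threshold also follows from the compactness and modulation argument. The extensions to non-radial data, and across dimensions, of the required compactness at threshold and of the exponential convergence are provided by \cite{LZ09,CFR22,MMMZ25}, relying on the scattering results from (i). The restriction to radial data in $d=3$ reflects the remaining gap in the threshold modulation analysis, which rules out translation drift only under symmetry.
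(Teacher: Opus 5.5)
\emph{Gap: global existence does not follow from a uniform $\dot H^1$ bound.} Your overall plan matches the paper, which simply cites the relevant works: the Kenig--Merle concentration-compactness/rigidity scheme for (i), and the Duyckaerts--Merle compactness, exponential convergence and classification for (ii). However, one step of your sketch is false, and it hides a case that the cited proofs treat separately.

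You claim the solution is global because $\|\nabla u(t)\|_{L_x^2}^2\le(1-\delta)\|\nabla W\|_{L_x^2}^2$ on its lifespan. For the energy-critical equation there is no such criterion. The rescaling $u\mapsto\lambda^{-\frac{d-2}{2}}u(t/\lambda^2,x/\lambda)$ preserves the $\dot H^1$ norm and multiplies the lifespan by $\lambda^2$. So the local existence time is not controlled by $\|\nabla u_0\|_{L_x^2}$, and the blow-up criterion is $S_I(u)=\infty$ (Theorem~\ref{T:local}(ii)), not growth of the kinetic energy. The reasoning you invoke is the $H^1$-subcritical one. Here a uniform $\dot H^1$ bound alone does not rule out finite-time blow-up, so global existence in (i) is an \emph{output} of the compactness argument, not an input.

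\emph{Missing case: a critical element with finite lifespan.} The minimal non-scattering solution $u_c$ may a priori have $T_+<\infty$ with $N(t)\to\infty$ as $t\uparrow T_+$, and your rigidity step does not exclude it. The localized virial identity gives a contradiction only by integrating the lower bound $8K(u_c)\gtrsim\|\nabla u_c\|_{L_x^2}^2\gtrsim E(u_c)>0$ over time intervals long compared with $R^2$, since $|M_R(u_c)|\lesssim R^2$. That is exactly what is unavailable when $T_+<\infty$.

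The missing ingredient is the localized-mass argument of Kenig--Merle, reproduced as Lemma~\ref{lem:finite-ap} in this paper:
\begin{enumerate}
\item[(a)] Compactness and $N(t)\to\infty$ give $\|u_c(t)\chi_R\|_{L_x^2}\to0$ as $t\uparrow T_+$, for every $R$.
\item[(b)] By H\"older, Sobolev and the scale invariance of $\|\nabla\chi_R\|_{L_x^d}$, the derivative $\bigl|\frac{d}{dt}\|u_c(t)\chi_R\|_{L_x^2}^2\bigr|$ is bounded uniformly in $R$ and $t$. Hence $\|u_c(t)\chi_R\|_{L_x^2}^2\lesssim T_+-t$.
\item[(c)] Letting $R\to\infty$ shows $u_c(t)\in L^2$ with mass $\lesssim T_+-t$. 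Mass conservation then forces $u_c\equiv0$.
\end{enumerate}
Only after this does the virial argument, with $N(t)$ bounded below and the spatial center controlled, dispose of the case $T_+=\infty$. The same exclusion is what yields globality on the threshold in (ii); compactness and modulation alone do not give it.
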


Compactness of non-scattering critical solutions plays a central role in the analysis of the lower branch. Through concentration-compactness, failure of scattering leads to a critical element whose trajectory is precompact modulo symmetries, and the minimality of $E(W)$ then enables a rigidity argument. On the upper branch, such compactness is no longer available in the same form. Consequently, classical characterizations of upper-branch dynamics require additional spatial localization, such as $H^1$ regularity or finite variance. 

We write 
\[
\Sigma:=\{f\in H^1(\mathbb R^d): |x|f \in L_x^2 (\mathbb R^d)\}.
\]
The previously known upper-branch results can be summarized as follows.

\begin{theorem}[Known results on the upper branch, \cite{kenig-merle1, KV1, DM08, LZ09, CFR22}] Let $d\ge 3$ and suppose 
$$\|\nabla u_0\|_{L_x^2}>\|\nabla W\|_{L_x^2}.$$ Then
\begin{enumerate}
\item[(i)] If $E(u_0)<E(W)$ and either $u_0\in H^1_{\mathrm{rad}}$ or $u_0\in \Sigma$, the corresponding solution blows up in finite time in both directions.
\item[(ii)] If $E(u_0)=E(W)$ and $u_0\in H^1_{\mathrm{rad}}$, then either $u$ blows up in finite time in both directions or $u=W^+$ modulo symmetries.
\end{enumerate} 
\end{theorem}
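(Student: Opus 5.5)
This statement collects results already in the literature, so the plan is to reconstruct the classical arguments: a localized virial identity combined with a coercivity estimate coming from the variational characterization of \(W\).

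For part (i), start with coercivity. Let \(\delta_0>0\) be such that \(E(u_0)\le(1-\delta_0)E(W)\). By the sharp Sobolev inequality (optimized by \(W\)), invariance of the upper interior cone, and energy conservation, there is \(\delta_1>0\) with
\[
\|\nabla u(t)\|_2^2-\|u(t)\|_{2^*}^{2^*}\le -\delta_1\|\nabla u(t)\|_2^2\le-\delta_1\|\nabla W\|_2^2
\]
for all \(t\) in the lifespan. This follows by analyzing \(f(y)=\tfrac12 y-\tfrac{C_d^{2^*}}{2^*}y^{2^*/2}\) past its maximum at \(y=\|\nabla W\|_2^2\). Equivalently, one can rewrite the left-hand side as \(dE(u)-\tfrac{2}{d-2}\cdot(\text{kinetic part})\) and use \(\|\nabla u\|_2>\|\nabla W\|_2\).

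Next comes the virial step. If \(u_0\in\Sigma\), the exact virial identity
\[
\tfrac{d^2}{dt^2}\int|x|^2|u|^2=8\bigl(\|\nabla u\|_2^2-\|u\|_{2^*}^{2^*}\bigr)\le -8\delta_1\|\nabla W\|_2^2
\]
forces the variance to become negative in finite time in each direction, so \(T_\pm\) are finite. If \(u_0\in H^1_{\mathrm{rad}}\), use the Ogawa--Tsutsumi truncated virial \(\int\phi_R|u|^2\). The errors are bounded by \(\|u\|_{L^2(|x|>R)}\) terms and, through the radial Strauss inequality, by \(R^{-(d-1)\cdot\frac{2}{d-2}\cdot\ldots}\)-type decay of the tail potential energy, weighted against mass conservation. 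Choosing \(R\) large then makes the second derivative \(\le-4\delta_1\|\nabla W\|_2^2\), and the same argument gives finite-time blow-up. The finite mass in \(H^1\) is exactly what controls these error terms.

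For part (ii), threshold energy with radial \(H^1\) data, follow Duyckaerts--Merle. The coercivity is now lost near \(\mathcal W\). The modulation estimate gives \(\|\nabla u\|_2^2-\|u\|_{2^*}^{2^*}\approx-\delta(t)\), where \(\delta(t)=\bigl|\|\nabla u\|_2^2-\|\nabla W\|_2^2\bigr|\) controls the distance to \(\mathcal W\). Suppose \(T_+=\infty\). The localized virial combined with mass conservation gives \(\int_t^\infty\delta\lesssim\delta(t)\), hence exponential decay of \(\delta\), hence exponential convergence to \(W\) modulo symmetries with controlled modulation parameters. Uniqueness of exponentially convergent threshold solutions, via the linearized operator spectral analysis producing the \(e^{-e_0t}\mathcal Y_+\) expansion, then identifies \(u=W^+\).

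The main obstacle is this last rigidity step: the spectral construction of \(W^\pm\) together with the contraction argument showing that any solution converging exponentially to \(W\) coincides with \(W^+\) or \(W^-\). The upper-sheet sign selects \(W^+\). Finally, apply the argument to both time directions, using the conjugation symmetry.
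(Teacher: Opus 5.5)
Part (i) is correct. For part (ii) you follow the Duyckaerts--Merle scheme that the statement cites. The paper gives no proof of this theorem. For the $\Sigma$ case it only restates the classical Glassey result (Theorem~\ref{thm:general-subthreshold}(i)), which is exactly your argument.

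The radial parts are recovered, in the larger class $\dot H^1_{\mathrm{rad}}$, by Theorems~\ref{thm:radial-subthreshold} and~\ref{thm:radial-threshold}, through a different mechanism:
\begin{itemize}
\item You use mass conservation and the Strauss inequality to make the truncated-virial errors small for a single fixed $R$. This gives $\frac{d^2}{dt^2}V_R\le -c$ (resp.\ $\le -c\,\delta(u(t))$) for all times. It is short, and legitimate here because $u_0\in H^1$.
\item The paper never uses the $L^2$ norm. It bounds the exterior potential energy by the exterior Hardy mass $H_R$ (Lemma~\ref{lem:radial-tail}) and controls the growth of $H_R$ by the flux bound of Lemma~\ref{lem:Hardy-flux}. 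It therefore has to let the radius grow with time: dyadic times with $R_n^2=L2^n$ in Lemma~\ref{lem:radial-negative-gap}, and $R^2=L\int_0^T\delta$ in Lemma~\ref{lem:radial-no-return}.
\item The cost is a more delicate convexity argument. The gain is removing the finite-mass hypothesis, which in $d=3,4$ is what admits $W^+$ itself.
\end{itemize}
A small slip: $K(u)=2^*E(u)-\frac{2}{d-2}\|\nabla u\|_2^2$, not $dE(u)-\cdots$. Your coercivity bound is unaffected.

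\textbf{The gap.} It is in (ii), at ``hence exponential decay of $\delta$''. The bound $\int_t^\infty\delta\lesssim\delta(t)$ gives $\int_t^\infty\delta\lesssim e^{-ct}$. It says nothing about $\sup_{s\ge t}\delta(s)$, and it does not exclude repeated excursions away from $\mathcal W$.

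To trap the solution, you must take $t_n\to\infty$ with $\delta(u(t_n))\to0$ and integrate $|\alpha'|\lesssim\lambda^2\delta$ and $|(\lambda^{-2})'|\lesssim\delta$ (from \eqref{Mod2}) forward from $t_n$. This closes only if $\sup_n\lambda(t_n)<\infty$. Mass gives $\lambda\gtrsim1$; you already need this tacitly for the fixed-$R$ error near $\mathcal W$ to be $\ll\delta$, cf.\ \eqref{moll}. But nothing in your sketch rules out $\lambda(t_n)\to\infty$, i.e.\ concentration along the good times. The paper must handle exactly this point separately in the finite-variance case (claim \eqref{claimfinal}). In the radial case it sidesteps it with the scale-invariant no-return Lemma~\ref{lem:radial-no-return} plus Theorem~\ref{Ret2.3}, which needs no a priori bound on $\lambda$.

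\textbf{A repair within your framework.}
\begin{itemize}
\item Since $\frac{d}{dt}M_R(u(t))\le-c\,\delta(u(t))<0$ and $V_R\ge0$, you get $M_R(u(t))\ge0$. So $V_R(u(t))$ is nondecreasing, and $V_R(u(t_n))\ge V_R(u_0)>0$ for $R$ large.
\item Write $u(t_n)=\widetilde W_n+h_n$ with $\|h_n\|_{\dot H^1}\lesssim\delta(u(t_n))$. Hardy's inequality gives $\int_{|x|\le C_0R}|x|^2|h_n|^2\lesssim R^4\|h_n\|_{\dot H^1}^2\to0$.
\item If $\lambda(t_n)\to\infty$, then also $\int_{|x|\le C_0R}|x|^2|\widetilde W_n|^2\to0$, so $V_R(u(t_n))\to0$, a contradiction.
\item With $\lambda(t_n)$ bounded, forward integration keeps $\delta$ small on $[t_n,\infty)$. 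Then $\delta(t)\lesssim|\alpha(t)|\lesssim\int_t^\infty\delta\lesssim e^{-ct}$, and the modulation parameters converge.
\end{itemize}
Only then does Proposition~\ref{2WW} apply. So the real difficulty is this trapping and scale-control step, not the spectral rigidity, which the paper simply cites.
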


The $L^2$-based regularity or decay assumptions in these upper-branch results are not natural for this energy-critical problem whose natural phase space is $\dot H^1$. This is particularly apparent in dimensions $d=3,4$, where neither the ground-state $W$ nor the distinguished threshold solutions $W^\pm$ belong to $L^2$. Thus, some of the central objects governing the threshold dynamics are excluded from the framework of the previous results. This limitation was already recognized by Duyckaerts and Merle ~\cite{DM08}. They conjectured that their upper-threshold classification should remain valid without $L^2$ assumption and also expected that $W^+$ should blow up in finite negative time in $d=3,4$, as they had proved in dimension $d=5$. Both questions point to the need for an upper-branch theory formulated directly in the natural energy space $\dot H^1_{\mathrm{rad}}$. 

Our first contribution answers this question affirmatively and gives a complete characterization of the radial upper-cone dynamics in the natural energy space $\dot H^1_{rad}$. No finite-mass assumption is required. Our results show that, in the interior of the upper cone, every radial $\dot H^1$ solution blows up in finite time in both directions; on the upper threshold surface, every radial solution either blows up in finite time in both directions or coincides with $W^+$ modulo symmetries. {In particular, we obtain} finite-time blow-up of $W^+$ in the negative time direction in dimensions $d=3,4$.

\begin{theorem}[Radial solutions in the interior of the upper cone]
\label{thm:radial-subthreshold}
Let $d\geq3$, and let $u:I\times\R^d\to\C$ be the radial maximal-lifespan solution to \eqref{1.1} with initial data $u|_{t=0}=u_0\in \dot H^1_{\mathrm{rad}}(\mathbb R^d)$ satisfying
\begin{equation}\label{hyp:subthreshold}
 E(u_0)<E(W),\qquad
 \|\nabla u_0\|_{L_x^2}>\|\nabla W\|_{L_x^2}.
\end{equation}
Then the solution blows up
in finite time in both time directions.
\end{theorem}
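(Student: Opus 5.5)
Our plan is to run a localized virial argument in which the missing $L^2$ control is replaced by a bound that uses only $\dot H^1_{\mathrm{rad}}$, radial decay, and energy conservation. By time-reversal symmetry it is enough to treat the forward direction. We argue by contradiction and suppose $T_+=\infty$, or more generally that $u$ does not blow up in finite forward time.

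\textbf{Step 1: coercivity.} First we record the standard variational consequence of \eqref{hyp:subthreshold}. There is $\delta=\delta(E(W)-E(u_0))>0$ such that
\[
\|\nabla u(t)\|_2^2-\|u(t)\|_{2^*}^{2^*}\le -\delta\|\nabla u(t)\|_2^2 \qquad\text{for all } t\in I,
\]
and moreover $\|\nabla u(t)\|_2^2\ge(1+\delta)\|\nabla W\|_2^2$. This follows from the sharp Sobolev inequality together with energy conservation and the invariance of the upper cone.

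\textbf{Step 2: truncated virial functional.} Next we introduce $V_R(t)=\int \phi_R(x)|u(t,x)|^2\,dx$, where $\phi_R=R^2\phi(x/R)$ with $\phi(x)=|x|^2$ for $|x|\le1$ and $\phi$ constant for $|x|\ge2$. Since $u\in \dot H^1$ only, $V_R$ is not finite a priori. We therefore work with $V_R(t)-V_R(0)$, or equivalently with $V_R'$ itself: $V_R'(t)=2\Im\int\nabla\phi_R\cdot\nabla u\,\bar u$, and since $|\nabla\phi_R|\lesssim R\,\mathbb 1_{|x|\le 2R}$, Hardy's inequality gives $|V_R'(t)|\lesssim R^2\|\nabla u(t)\|_2^2$. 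Differentiating once more produces $8(\|\nabla u\|_2^2-\|u\|_{2^*}^{2^*})$ plus error terms supported in $|x|\ge R$. The quadratic error is nonpositive for a suitable convex-type $\phi$, or is bounded by $\int_{|x|\sim R}|u|^2/|x|^2$. The nonlinear error is controlled by the radial Strauss estimate $|u(x)|\lesssim |x|^{-(d-2)/2}\|\nabla u\|_2$ in the form $\int_{|x|>R}|u|^{2^*}\lesssim \bigl(\sup_{|x|>R}|x|^{(d-2)/2}|u|\bigr)^{4/(d-2)}\int_{|x|>R}|u|^2/|x|^2$.

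\textbf{Step 3 (main obstacle): controlling the errors by $\|\nabla u\|_2^2$.} We must show that the errors are at most $\tfrac\delta2\|\nabla u(t)\|_2^2$ uniformly in time. Without finite mass, $\int_{|x|\sim R}|u|^2/|x|^2$ cannot be made small through an $L^2$ bound. It is only bounded by $\|\nabla u\|_2^2$, with no small constant. The first route we would try is a modified exterior Hardy estimate: the piece of $u$ in $|x|\ge R$ carries little kinetic energy relative to the total, because the coercive gap forces concentration. Concretely, we would split $\|\nabla u\|_2^2$ into its inner part ($|x|<R$) and outer part. Applying the Strauss bound to the outer part makes the exterior nonlinear term $\lesssim \|\nabla u\|_{L^2(|x|>R)}^{2^*}$. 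Energy conservation then shows that if the outer kinetic energy is large, the exterior linear term has the right (good) sign. The sharp Sobolev inequality applied to a cut-off of the inner part would close the bound. Failing this, we would use a time-dependent radius $R(t)$, chosen through the mass flux bound $|\partial_t\int_{|x|<R}|u|^2|\lesssim \|\nabla u\|_2^2$, so that the local mass $M_R(t)=\int\phi_R|u|^2$ is finite. The local mass is finite because the Strauss decay gives $\int_{|x|<2R}|u|^2\lesssim R^2\|\nabla u\|_2^2$.

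\textbf{Step 4: conclusion.} With Step 3 in hand we obtain $V_R''(t)\le -4\delta\|\nabla u(t)\|_2^2\le -c<0$ uniformly in $t$ on the forward lifespan. This contradicts the bound $|V_R'(t)|\lesssim R^2\sup_t\|\nabla u(t)\|_2^2$ for large $t$, provided $\|\nabla u\|_2$ stays bounded. If it does not stay bounded, we would compare $\int_0^t\|\nabla u\|_2^2$ with $R^2\|\nabla u(t)\|_2^2$, via a Grönwall-type ODE inequality in the style of Ogawa--Tsutsumi, and conclude that $T_+<\infty$. Finite-time blow-up in the sense $S_{(0,T_+)}(u)=\infty$ then follows from the standard blow-up criterion.
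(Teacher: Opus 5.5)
There is a genuine gap, and it is exactly the step you flag as the main obstacle. Step 3 is never resolved, yet Step 4 uses its strongest form: that at one fixed $R$ the exterior errors are at most $\tfrac{\delta}{2}\|\nabla u(t)\|_{L_x^2}^2$ for every $t$ in the forward lifespan. Neither of your routes supplies this.
\begin{itemize}
\item The gap $K(u)\le-\delta\|\nabla u\|_{L_x^2}^2$ is a global inequality and says nothing about where the kinetic energy sits.
\item The exterior kinetic term in the virial remainder is merely nonpositive. It cannot absorb $\|\nabla u\|_{L^2(|x|>R)}^{2^*}$, which is a higher power, nor the sign-indefinite term $\int\Delta^2\varphi_R|u|^2\,dx$.
\item The flux bound $\bigl|\partial_t\int_{|x|<R}|u|^2\,dx\bigr|\lesssim\|\nabla u\|_{L_x^2}^2$ has no decay in $R$, so a time-dependent radius chosen through it gives no smallness.
\end{itemize}
The missing ingredient is the exterior Hardy mass $H_R(u)=\int\chi(|x|/R)|u|^2|x|^{-2}\,dx$. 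It tends to $0$ as $R\to\infty$ for the fixed datum. It obeys the scale-gaining flux bound $\bigl|\tfrac{d}{dt}H_R(u)^{1/2}\bigr|\lesssim R^{-2}\|\nabla u\|_{L_x^2}$, so it stays small exactly as long as $\int_0^t\|\nabla u\|_{L_x^2}\,ds\ll R^2$. In the radial case it controls the nonlinear tail via $\int_{|x|>R}|u|^{2^*}\,dx\lesssim\|\nabla u\|_{L_x^2}^{2/(d-2)}H_R(u)^{(d-1)/(d-2)}$. This comes from keeping half a power of $H_R$ in the pointwise bound $\sup_{|x|>R}|x|^{d-2}|u|^2\lesssim\|\nabla u\|_{L_x^2}H_R(u)^{1/2}$. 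Your plain Strauss version, $\|\nabla u\|_{L_x^2}^{4/(d-2)}H_R(u)$, is too weak in $d=3$. There it equals $\|\nabla u\|_{L_x^2}^{4}H_R(u)$, which cannot be absorbed by a small multiple of $\|\nabla u\|_{L_x^2}^2$ if the kinetic energy grows.

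Since the coercive bound $\tfrac{d}{dt}M_R(u)\le-4g$, with $g:=-K(u)$ and $M_R(u)=V_R'$, holds only on such a time window, the endgame must also change. Comparing $V_R'$ against $|M_R(u)|\lesssim R^2\|\nabla u\|_{L_x^2}^2$ does not close on a window of length $\sim\epsilon R^2$. What works is:
\begin{itemize}
\item take a compactly supported weight, so that $V_R$ is finite and nonnegative (your weight, constant at infinity, gives $V_R=\infty$ and discards this sign);
\item use $V_R(u_0)=o(R^4)$ and $M_R(u_0)=o(R^2)$ for the fixed datum;
\item integrate the virial inequality twice.
\end{itemize}
Your Ogawa--Tsutsumi fallback for unbounded kinetic energy also fails without finite mass. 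Then you only have $|M_R(u)|\lesssim R^2\|\nabla u\|_{L_x^2}^2$, so with $y(t)=\int_0^t g\,ds$ you get $4y\le|M_R(u(0))|+|M_R(u(t))|\lesssim R^2(1+y')$. This linear inequality is compatible with global existence; the Riccati structure requires $|M_R|\lesssim R\|u\|_{L_x^2}\|\nabla u\|_{L_x^2}$. The paper instead chooses doubling times $\int_0^{t_n}g^{1/2}\,dt=2^n$ and radii $R_n^2=L2^n$, so that $H_{R_n}(u)$ stays small on $[0,t_n]$. The twice-integrated virial then gives $\int_0^{t_n}(t_n-t)g\,dt=o(4^n)$. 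Cauchy--Schwarz bounds the same integral below by $4^{n-2}(t_n-t_{n-1})/(t_{n-1}-t_{n-2})$. Together these force $t_n$ to stay bounded, contradicting $t_n\to\infty$.
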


\begin{theorem}[Radial solutions on the upper threshold surface]\label{thm:radial-threshold}
Let $d\geq3$, and let $u:I\times\R^d\to\C$ be the radial maximal-lifespan solution to \eqref{1.1} with initial data $u|_{t=0}=u_0\in \dot H^1_{\mathrm{rad}}(\mathbb R^d)$ satisfying
\begin{equation}\label{hyp:bounded}
 E(u_0)=E(W),\qquad
 \|\nabla u_0\|_{L_x^2}>\|\nabla W\|_{L_x^2}.
\end{equation}
Then either $|I|<\infty$ or $u=W^+$ modulo symmetries.
\end{theorem}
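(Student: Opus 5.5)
We argue by contradiction: suppose $|I|=\infty$ and, by time reversal symmetry (the conjugated flow), assume $T_+=\infty$. The proof runs a localized virial argument adapted to radial $\dot H^1$ data, with no mass available, combined with the modulation analysis near $\mathcal W$ from \cite{DM08}.

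\textbf{Step 1: modulation.} Set $\delta(t)=\|\nabla u(t)\|_2^2-\|\nabla W\|_2^2>0$. On the threshold sheet the variational characterization of $W$ gives a dichotomy. Either $\delta(t)\ge\delta_0$, and then $\int|\nabla u|^2-\int|u|^{2^*}\lesssim-\delta(t)$ with a uniform constant. Or $\delta(t)$ is small, and then $u(t)$ is $O(\delta(t))$-close in $\dot H^1$ to $e^{i\theta(t)}W_{\lambda(t)}$, with modulation parameters whose derivatives satisfy $|\theta'|+|\lambda'/\lambda|\lesssim\lambda^{-2}\delta$.

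\textbf{Step 2: virial in the energy space.} We use the truncated virial
$$V_R(t)=\int\phi_R(x)|u(t,x)|^2\,dx,\qquad \phi_R=R^2\phi(x/R).$$
This is finite for $u\in\dot H^1_{\rm rad}$ once we use the weight $\phi_R\lesssim R^2$ on a bounded set together with Hardy's inequality: $\int_{|x|\le 2R}|u|^2\lesssim R^2\int|u|^2/|x|^2\lesssim R^2\|\nabla u\|_2^2$. Its second derivative is $V_R''=8\big(\|\nabla u\|^2-\|u\|_{2^*}^{2^*}\big)+A_R(u)$. The error $A_R$ lives on $|x|\ge R$ and is controlled by the radial Strauss estimate $|u(x)|\lesssim|x|^{-(d-2)/2}\|\nabla u\|_2$ together with the tail $\int_{|x|>R}|\nabla u|^2$. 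The essential point is that for radial data with $\|\nabla u(t)\|_2$ bounded, the exterior part of the potential energy is small uniformly in $t$ once $R$ is large. The main obstacle is unbounded kinetic energy. We plan to exclude it first. If $\sup_t\|\nabla u\|_2=\infty$ along $T_+=\infty$, we run a mass-free convexity argument on $V_R$ in the exterior-tail-controlled regime, with $R$ chosen adaptively, so that $V_R''\lesssim -\delta(t)$. This forces $V_R$ to become negative in finite time, a contradiction. Otherwise, the Hardy bound $V_R\lesssim R^2\sup\|\nabla u\|^2$ combines with $V_R''\le-c\,\delta(t)$ to give the key estimate
$$\int_t^{t+\tau}\delta(s)\,ds\lesssim R^2 .$$
We also get a monotonicity bound $|V_R'(t)|\lesssim R\,\delta(t)$: near $\mathcal W$ it follows from the modulation expansion and $\operatorname{Im}\int\bar W\nabla W=0$, and away from $\mathcal W$ it holds trivially.

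\textbf{Step 3: conclusion.} Inverting as in \cite{DM08}, $V_R''\le -c\delta$ and $|V_R'|\lesssim R\delta$ give $\int_t^\infty\delta\lesssim R\,\delta(t)$. For this we need $\lambda(t)$ to stay bounded, so that a fixed $R\gg\lambda$ suffices; we get this from integrating $|\lambda'/\lambda|\lesssim\lambda^{-2}\delta$, which requires a bootstrap. Exponential decay $\delta(t)\lesssim e^{-ct}$ then follows, with $\lambda(t)\to\lambda_\infty$ and $\theta(t)\to\theta_\infty$. Hence $u(t)\to e^{i\theta_\infty}W_{\lambda_\infty}$ exponentially in $\dot H^1$. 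The uniqueness part of \cite{DM08}'s classification states that the only solutions on the threshold sheet converging exponentially to $\mathcal W$ are $W^\pm$ up to symmetries. That uniqueness proof uses only $\dot H^1$ linear analysis, so it needs no mass. Since $\delta>0$, we get $u=W^+$ modulo symmetries.

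Finally, we must show that the solution is not $W^+$ and does not blow up in infinite time in the other direction. In the negative direction the same argument would force convergence to $\mathcal W$ as $t\to T_-=-\infty$. Then $u$ would equal $W^+$ and also the time reversal of $W^+$, which contradicts \cite{DM08}'s description of $W^+$ leaving a neighborhood of $\mathcal W$ as $t$ decreases. So $T_->-\infty$, as required. I expect the bound on $\lambda(t)$ and the exclusion of $\sup_t\|\nabla u\|=\infty$ without mass to be the hardest points.
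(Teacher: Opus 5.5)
There is a genuine gap, and it sits exactly where the mass-free setting departs from \cite{DM08}. Step 2 rests on the claim that for radial data with bounded $\|\nabla u(t)\|_{L_x^2}$, the exterior potential energy $\int_{|x|>R}|u(t)|^{2^*}\,dx$ is small uniformly in $t$ once $R$ is large. Without mass this is false. The $\dot H^1$ Strauss bound $|x|^{\frac{d-2}{2}}|u(x)|\lesssim\|\nabla u\|_{L_x^2}$ is scale invariant, so at best it converts this quantity into the exterior Hardy mass $H_R(u(t))$. Concretely, the radial family $f_\mu=\mu^{-\frac{d-2}{2}}W(\cdot/\mu)$ has fixed kinetic energy, while $\int_{|x|>R}|f_\mu|^{2^*}\,dx\to\|W\|_{L_x^{2^*}}^{2^*}$ as $\mu\to\infty$. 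The exterior kinetic energy you also want small is worse: it has no flux control at all, and one must instead use the sign of the Hessian term, cf.\ \eqref{oneside}. Hence nothing a priori prevents $u$ from spreading to ever larger scales, in particular during excursions away from $\mathcal W$ where no modulation parameters exist. So there is no fixed $R$ with $\frac{d}{dt}M_R\le-c\,\delta$ on $[0,\infty)$. Everything you build on that inequality is therefore unsupported: the bound $\int_t^{t+\tau}\delta\lesssim R^2$, sequential convergence, and the bound on $\lambda(t)$. The bootstrap for $\lambda$ only works on an interval on which $u$ stays near $\mathcal W$; if $u$ leaves and re-enters, the scale at re-entry is uncontrolled. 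The real obstruction is thus long-time control of the exterior tail, not unbounded kinetic energy; the paper never splits into those cases. Note also the $\dot H^1$ scalings: $V_R\lesssim R^4\|\nabla u\|_{L_x^2}^2$ and $|M_R|\lesssim R^2\delta$, since your bound $|V_R'|\lesssim R\delta$ uses the mass.

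The paper supplies exactly these missing pieces. The tail estimate \eqref{eq:radial-nonlinear-tail} bounds the exterior potential energy by $H_R$. The flux bounds \eqref{rHroot} and \eqref{refHardy} show that $H_R(u(t))$ stays small on time intervals over which $\int\|\nabla u\|_{L_x^2}\,dt\ll R^2$, resp.\ $\int\delta\,dt\ll R^2$. The radius is then tied to these time integrals. In Lemma~\ref{lem:radial-negative-gap} one takes $R_n^2=L2^n$ with doubling times $\int_0^{t_n}g^{1/2}\,dt=2^n$, which gives $\liminf_{t\to\infty}\delta(u(t))=0$. In the no-return Lemma~\ref{lem:radial-no-return} one takes $R^2=L\int_0^T\delta\,dt$, which upgrades sequential convergence to eventual trapping in $\{\delta\le\eta_*\}$. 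Only after trapping do Theorem~\ref{Ret2.3}, exponential convergence, and Proposition~\ref{2WW} (which, as you say, needs no mass) identify $u$ with $W^+$. Finally, your last paragraph misreads the statement: $u=W^+$ is one of the two allowed alternatives, so nothing about $T_-$ needs proving. The argument there is also invalid. That $W^+$ leaves a neighbourhood of $\mathcal W$ backward in time does not exclude a homoclinic return as $t\to-\infty$. Excluding that in $d=3,4$ was precisely the open question of \cite{DM08}, settled in Corollary~\ref{cor:Wplus-finite-backward} via the no-return lemma.
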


We next turn to non-radial data. Our results in this setting are twofold. First, in dimensions \(d\ge5\), without imposing any spatial localization, we obtain a quantitative rigidity property for solutions on the upper branch. {Namely, for solutions in the interior of the upper cone, the limit superior of their $\dot H^1$-norm satisfies an explicit lower bound as time approaches each endpoint of the maximal lifespan.} On the upper threshold surface, if a solution remains uniformly below this threshold throughout its forward lifespan, then it is global forward in time and coincides with $W^+$ modulo symmetries. The {threshold constant} arising from our argument is
\[
\kappa_d:=\Big(\frac d{d-2}\Big)^{\frac{d-2}{4}}.
\]

Second, for finite-variance data, we obtain a complete characterization of the upper-threshold dynamics in every dimension \(d\ge3\). Previously, the finite-variance assumption yielded finite-time blow-up in the interior of the upper cone through the classical Glassey argument, where the second derivative of the variance is uniformly negative. At the threshold energy, this uniform negativity is no longer available, and the same argument does not provide a classification. We prove that for \(u_0\in\Sigma\) on the upper threshold surface, the only alternative to finite-time blow-up is the special solution \(W^+\) modulo symmetries.

We describe these results in the following two theorems. 
We also include the classical finite-time blow-up statement for data with finite variance \cite{kenig-merle1}.

\begin{theorem}[Non-radial solutions in the interior of the upper cone]
\label{thm:general-subthreshold}
Let $d\geq3$, let $u_0\in\dot H^1(\R^d)$ satisfy
\eqref{hyp:subthreshold}, and let $u:I\times\R^d\to\C$ {, $I=(T_-,T_+)$,} be the
corresponding maximal-lifespan solution.
\begin{enumerate}
\item[(i)] If $u_0\in\Sigma$, then $|I|<\infty$.
\item[(ii)] If $d\geq5$, then
\[
{ \limsup_{t\uparrow T_+}\|\nabla u(t)\|_{L_x^2}
 \geq\kappa_d\|\nabla W\|_{L_x^2},
 \qquad
 \limsup_{t\downarrow T_-}\|\nabla u(t)\|_{L_x^2}
 \geq\kappa_d\|\nabla W\|_{L_x^2}.
}\]
\end{enumerate}
\end{theorem}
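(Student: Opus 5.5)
Part (i) is the classical Glassey argument, which I would only sketch. For $u_0\in\Sigma$ the variance $V(t)=\int|x|^2|u(t)|^2\,dx$ is finite and $C^2$ on $I$. By the virial identity and conservation of energy,
\[
V''(t)=8\bigl(\|\nabla u\|_2^2-\|u\|_{2^*}^{2^*}\bigr)=\tfrac{16}{d-2}\bigl(dE(u)-\|\nabla u(t)\|_2^2\bigr).
\]
Since $E(W)=\frac1d\|\nabla W\|_2^2$, the hypotheses \eqref{hyp:subthreshold} together with the invariance of the upper interior cone give $\|\nabla u(t)\|_2^2>\|\nabla W\|_2^2$ and $dE(u)<\|\nabla W\|_2^2$. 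Hence $V''\le-\frac{16}{d-2}\bigl(\|\nabla W\|_2^2-dE(u)\bigr)<0$ uniformly on $I$. A positive function with uniformly negative second derivative cannot exist on a half-line, so both $T_\pm$ are finite.

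For part (ii) I would argue by contradiction, treating the forward endpoint; the backward endpoint follows by time reversal and conjugation. Suppose that for some $\delta>0$ and $t_0\in I$ we have $\|\nabla u(t)\|_2\le(\kappa_d-\delta)\|\nabla W\|_2$ for all $t\in[t_0,T_+)$. The constant is chosen so that $\kappa_d^{4/(d-2)}=\frac d{d-2}$. By sharp Sobolev (equality at $W$, where $\|W\|_{2^*}^{2^*}=\|\nabla W\|_2^2$),
\[
\frac{\|u\|_{2^*}^{2^*}}{\|\nabla u\|_2^{2}}\le\Bigl(\frac{\|\nabla u\|_2}{\|\nabla W\|_2}\Bigr)^{\frac4{d-2}}\le(1-\delta')\frac d{d-2},
\]
and this also holds for the localized pieces $\chi u$ up to commutator errors. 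I would then run a truncated virial with $V_R(t)=\int R^2\phi(x/R)|u|^2\,dx$, where $\phi=|x|^2$ near the origin and $\phi$ is bounded and radially convex far out. In $d\ge5$ the function $V_R$ is finite for $\dot H^1$ data, since $\int_{|x|\le R}|u|^2\lesssim R^2\|u\|_{2^*}^2$ by Hölder. It also satisfies $|V_R'|\lesssim R\,\|u\|_{L^2(|x|\lesssim R)}\|\nabla u\|_2\lesssim R^2\|\nabla u\|_2^2$.

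The key step is to show $V_R''(t)\le-c<0$ uniformly on $[t_0,T_+)$, with $c$ independent of $R$. Write $V_R''$ as the main term $\frac{16}{d-2}(dE-\|\nabla u\|_2^2)$ plus an error. The error consists of the defect of the Hessian of $\phi$ (nonnegative kinetic contribution) and of $\Delta\phi$ (the nonlinear term weighted by $2d-\Delta\phi$) in the exterior region $|x|\gtrsim R$. The danger is that the truncation removes the focusing term in the exterior while keeping kinetic energy there. The natural bound is exterior kinetic energy minus $\frac{d-2}{d}\times$ exterior potential energy, and it is precisely the ratio bound above, with factor $\frac{d}{d-2}$, that should show the exterior contributes with a sign, or at worst $o_R(1)$ after averaging over $R$ dyadically. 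I expect this balancing to be the main obstacle and the real source of $\kappa_d$. I would try choosing $\phi$ so that $\Delta\phi$ and $\phi''$ decay in a matched way (for instance a Morawetz-type tail $\phi\sim R|x|$), so that the exterior error is a nonnegative multiple of $\|\nabla(\chi u)\|_2^2-\frac{d-2}{d}\cdot\kappa$-adjusted potential term.

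Once $V_R''\le-c$ is in hand, the conclusion follows in two cases. If $T_+=\infty$, integrating twice contradicts $V_R\ge0$ and $|V_R'(t_0)|\lesssim R^2$. If $T_+<\infty$, the solution has $\dot H^1$ norm bounded near $T_+$, so it is a type II blow-up. In that case I would localize: $V_R(t)\to$ a finite limit as $t\uparrow T_+$, by the standard mass-localization argument for bounded-gradient blow-up in $d\ge5$. Then $R$ is chosen large relative to $(T_+-t_0)^{1/2}$ to produce a contradiction with the uniform concavity and the bound on $V_R'$.
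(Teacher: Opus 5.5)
Your part (i) is fine; it is the classical Glassey argument that the paper cites from Kenig--Merle.

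\textbf{The gap in part (ii).} The failure is at the step you yourself flag: the uniform bound $V_R''\le -c$ on $[t_0,T_+)$ cannot be obtained this way. For a bounded weight, the kinetic defect $\partial_{jk}\varphi_R-2\delta_{jk}$ is negative semidefinite, so it is favorable rather than the danger. The harmful terms are the lost focusing term $\frac4d\int(2d-\Delta\varphi_R)|u|^{2^*}$ and the $\Delta^2\varphi_R$ mass term. So you need $\int_{|x|>R}|u(t)|^{2^*}$ (and the exterior Hardy mass) to be small uniformly in $t$.

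Write $u_{\mathrm{ext}}$ for the part of $u$ where $\varphi_R$ is constant. There the error is essentially $-8K(u_{\mathrm{ext}})$. Your ceiling inequality only yields $-8K(u_{\mathrm{ext}})\le 8\bigl(\tfrac{d}{d-2}(1-\delta')-1\bigr)\|\nabla u_{\mathrm{ext}}\|_{L^2}^2$, which is \emph{positive}. The bound $\|f\|_{2^*}^{2^*}<\frac d{d-2}\|\nabla f\|_2^2$ is exactly the statement $E(f)>0$, not $K(f)\ge0$. The latter would need the ratio to be at most $1$, which sharp Sobolev gives only when $\|\nabla f\|_{L_x^2}\le\|\nabla W\|_{L_x^2}$.

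More fundamentally, a direct argument does not prevent the supercritical bulk of a non-radial $\dot H^1$ solution from drifting out of $B_R$. On such a piece every coefficient of an eventually constant weight vanishes, and $V_R''$ need not be negative. In the radial case the paper controls this with Lemma~\ref{lem:radial-tail} and the Hardy-mass flux of Lemma~\ref{lem:Hardy-flux}, but there is no non-radial substitute. A Morawetz tail $\varphi_R\sim R|x|$ does not help, because $V_R$ and $V_R'$ then require $L^2$ bounds you do not have.

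The finite-$T_+$ branch also does not close. Uniform concavity on a bounded interval is compatible with $V_R\ge0$, and enlarging $R$ only enlarges $V_R(t_0)$ and $|V_R'(t_0)|$. Bounded-gradient blow-up need not concentrate the whole solution either.

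\textbf{The missing idea.} What is missing is the paper's compactness reduction, which is where $\kappa_d$ actually enters. Lemma~\ref{lem:ceiling-coercivity} uses the ceiling inside a profile decomposition. Every nonlinear profile and the remainder have nonnegative energy, and at most one profile lies above $\|\nabla W\|_{L_x^2}$. That profile has $t_n^1\equiv0$, and the others scatter by Theorem~\ref{thm:KM}(i). This produces a minimal-energy element of $\mathcal B_M$ that is almost periodic modulo $G$ with $K(v)\le-c_0$ (Proposition~\ref{prop:critical-element}). The contradiction then proceeds in four steps:
\begin{enumerate}
\item Almost periodicity excludes a finite lifespan (Lemma~\ref{lem:finite-ap}).
\item The Killip--Vi\c{s}an negative-regularity argument puts $v$ in $L^2$ with $N\equiv1$. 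This is where $d\ge5$ is used; finiteness of $V_R$ holds by H\"older in every $d\ge3$.
\item Minimality plus a Galilean boost forces $P(v)=0$, hence $x(t)=o(t)$.
\item Only then does the localized virial with $R=\varepsilon T$ give a contradiction, because compactness makes the exterior terms uniformly small.
\end{enumerate}
This reduction handles $T_+<\infty$ and $T_+=\infty$ at the same time.
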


\begin{theorem}[Non-radial solutions on the upper threshold surface]
\label{thm:general-threshold}
Let $d\geq3$, let $u_0\in\dot H^1(\R^d)$ satisfy
\eqref{hyp:bounded}, and let $u:I\times\R^d\to\C$ be the
corresponding maximal-lifespan solution.
\begin{enumerate}
\item[(i)] Suppose further that $u_0\in\Sigma$. Then in dimensions $3\le d\le 6$, $|I|<\infty$. In dimensions $d\ge 7$, either $|I|<\infty$ or
$u=W^+$ modulo symmetries.

\item[(ii)] Suppose $d\geq5$, $I=(T_-,T_+)$ and
\begin{equation}\label{hyp:ceiling}
 M:=\sup_{t\in[0,T_+)}\|\nabla u(t)\|_{L_x^2}
 <\kappa_d\|\nabla W\|_{L_x^2}.
\end{equation}
Then  $T_+=\infty$ and $u=W^+$ modulo a fixed phase,
scaling, spatial translation and time translation.
{In particular, since $W^+$ has finite backward lifespan, no
maximal-lifespan solution satisfying \eqref{hyp:bounded} can obey both}
\[
{ \limsup_{t\uparrow T_+}\|\nabla u(t)\|_{L_x^2}
 <\kappa_d\|\nabla W\|_{L_x^2},
 \qquad
 \limsup_{t\downarrow T_-}\|\nabla u(t)\|_{L_x^2}
 <\kappa_d\|\nabla W\|_{L_x^2}.
}\]
\end{enumerate}
\end{theorem}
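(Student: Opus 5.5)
We prove the two parts separately. Both rest on the virial identity and on the threshold-modulation analysis near $\mathcal W$ from \cite{DM08,LZ09}.

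\textbf{Part (i), finite variance.} For $u_0\in\Sigma$ on the upper threshold sheet, set $V(t)=\int|x|^2|u|^2\,dx$, which is finite on $I$. The virial identity gives
\[
V''(t)=8\Big(\|\nabla u\|_2^2-\|u\|_{2^*}^{2^*}\Big)=16\Big(E(u)-\tfrac{1}{d}\|u\|_{2^*}^{2^*}\cdot\tfrac{d}{d-2}\cdot\tfrac{2}{d}\cdots\Big),
\]
which reduces to $V''=-\tfrac{16}{d-2}\big(\|\nabla u\|_2^2-\|\nabla W\|_2^2\big)<0$ at $E(u)=E(W)$. Set $\delta(t)=\|\nabla u(t)\|_2^2-\|\nabla W\|_2^2>0$.

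Suppose $T_+=\infty$. Since $V>0$ and $V''<0$, we get $V'>0$ for all $t$, and $\int_0^\infty\delta<\infty$.
\begin{itemize}
\item We localize this in the standard way, $|V'(t)|\lesssim \|xu\|_2\|\nabla u\|_2$, and combine it with the modulation theory near $W$ for small $\delta$. This shows that $\delta(t)\to0$ exponentially along the trajectory, $u$ converges to $\mathcal W$, and the modulation parameters converge.
\item The uniqueness result of \cite{DM08} (exponential convergence to $W$ on the upper sheet forces $u=W^+$) then identifies $u$ with $W^+$ modulo symmetries.
\item The only subtle point is that $V$ must remain controlled while the scaling parameter may drift. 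The standard argument of \cite{DM08} bounds $\lambda(t)$ using $\int\delta<\infty$.
\end{itemize}

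To separate dimensions, note that $W\in L^2$ requires $d\ge5$, and $|x|W\in L^2$ requires $d\ge7$. The same holds for $W^+$, since $W^+-W$ decays faster. Hence for $3\le d\le6$ the alternative $u=W^+$ contradicts $u_0\in\Sigma$, and we conclude $|I|<\infty$. The argument is the same backward in time.

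\textbf{Part (ii), ceiling bound, $d\ge5$.} Without finite variance we use a truncated virial $V_R$ with weight $R^2\phi(x/R)$. The key step is an algebraic coercivity inequality on the upper threshold sheet, valid whenever $\|\nabla u\|_2\le M<\kappa_d\|\nabla W\|_2$. In the error terms of the localized virial, the nonlinear tail $\int_{|x|>R}|u|^{2^*}$ must be absorbed. With the sharp Sobolev inequality and energy $E(W)$, the constant $\kappa_d=(d/(d-2))^{(d-2)/4}$ is exactly the level at which
\[
\|u\|_{2^*}^{2^*}\le C_{\mathrm{Sob}}\|\nabla u\|_2^{2^*}
\]
still leaves $8\|\nabla u\|_2^2-8\|u\|_{2^*}^{2^*}\lesssim-\delta$ with a margin that controls truncation errors uniformly. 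The result is an estimate
\[
V_R''(t)\le -c\,\delta(t)+o_R(1).
\]
Since $|V_R'|\lesssim R\|u\|_{L^2(|x|\lesssim R)}\|\nabla u\|_2\lesssim R^2 M^2$ by Hardy's inequality, integrating over $[0,T]$ gives
\[
\int_0^T\delta\lesssim R^2+o_R(1)\,T.
\]
This yields $T_+=\infty$: if $T_+<\infty$, blow-up with a bounded $\dot H^1$ norm plus the profile analysis would force concentration, which contradicts the same virial control, as in \cite{DM08}. It also yields the averaged smallness of $\delta$. From there we run the \cite{DM08} scheme: modulation, exponential convergence to $\mathcal W$, and identification with $W^+$.

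\textbf{Main obstacle and the final claim.} The main obstacle is controlling the spatial translation and scaling parameters without compactness, in particular ruling out $\lambda(t)\to0$ or $\infty$ along subsequences. I would handle this by using $d\ge5$, where $W\in L^2$ and dispersive decay is fast enough to make the virial tails summable. The last statement of Theorem 1.6 then follows by applying the result in both time directions: both the forward and backward bounds would force $u$ to be both $W^+$ and a time-reversed $W^+$, and that is impossible because $W^+$ blows up in finite negative time.
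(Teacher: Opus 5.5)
Your proposal has genuine gaps in both parts. Your derivation of the final assertion from the forward classification is fine.

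\textbf{Part (i).} The gap is the step from $\int_0^\infty\delta(u(t))\,dt<\infty$ to ``$\delta\to0$ exponentially and the modulation parameters converge.'' The bound $|V'|\lesssim\|xu\|_2\|\nabla u\|_2$ only gives $V(t)\le V(0)+V'(0)t$, with no rate. It also gives $\delta(u(t_n))\to0$ merely along a sequence, whereas Corollary~\ref{t2.3} needs $\delta$ small for \emph{all} large $t$.

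The missing idea is the Banica-type inequality (Lemma~\ref{lem:banica}). Apply sharp Sobolev to $e^{i\lambda\phi}u$ and use that the resulting quadratic in $\lambda$ is nonnegative. This gives $|\Im\int\nabla\phi\cdot\nabla u\,\bar u|^2\lesssim\delta^2\int|\nabla\phi|^2|u|^2$, i.e.\ $(V')^2\lesssim (V'')^2V$. Hence $\sup_{t\ge0}V(t)<\infty$ and $V'\lesssim -V''$, so $\int_t^\infty\delta\lesssim e^{-c_0t}$.

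You must still exclude repeated exits, i.e.\ times with $\delta(u(t_n^+))=\eta_*$ and $\delta(u(t_n))\to0$. Since $|\alpha'|\lesssim\lambda^2\delta$, you need $\lambda\sim1$ on $[t_n^+,t_n]$. Bounding $\lambda$ through $\int\delta<\infty$ is circular: the modulation parameters exist only where $\delta<\delta_0$, so they cannot carry a bound across an excursion. The paper gets $\lambda\gtrsim1$ from mass conservation. It gets $\sup_n\lambda(t_n^+)<\infty$ from the uniform variance bound, since a concentrating bubble with tight mass would force zero mass. Then $\eta_*\lesssim\delta(u(t_n))+\int_{t_n^+}^\infty\delta\to0$, a contradiction.

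Your dimension split has the same problem. ``$W^+-W$ decays faster'' is unjustified, since only $\dot H^1$ convergence in time is known. For $d=5,6$ the paper shows $W^+(t)\notin\Sigma$ by combining the uniform variance bound with weak compactness in $\Sigma$, which would force $W\in\Sigma$.

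\textbf{Part (ii).} The proposed mechanism does not work. On the threshold sheet $8K(u)=-\frac{16}{d-2}\delta(u)$ holds identically. A kinetic ceiling gives no uniform-in-time control of the exterior terms $\int_{|x|>R}|u|^{2^*}$ and $H_R(u)$ in $F_R$ for non-radial $\dot H^1$ data. That is exactly why earlier upper-cone results needed radial symmetry or finite variance. So $V_R''\le-c\delta+o_R(1)$ is unsupported, and ``$W\in L^2$ for $d\ge5$'' does not help, because $u$ itself need not be in $L^2$.

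In the paper $\kappa_d$ enters only through Lemma~\ref{lem:ceiling-coercivity}. Below the ceiling $E(f)\ge c_M\|\nabla f\|_2^2$, so in a profile decomposition every profile energy is nonnegative and at most one profile lies above $\|\nabla W\|_2$. The proof is then a concentration-compactness argument in four steps:
\begin{enumerate}
\item Prove Theorem~\ref{thm:general-subthreshold}(ii). A minimal-energy almost periodic solution in the interior of the upper cone is excluded using negative regularity, zero momentum (via a Galilean boost), $x(t)=o(t)$, and the localized virial.
\item With lower-cone scattering, step 1 rules out sub-threshold supercritical profiles. So the forward orbit is precompact modulo $G$, and $T_+=\infty$ by Lemma~\ref{lem:finite-ap}.
\item A further virial argument on a limiting almost periodic solution gives $\delta(u(t_n))\to0$.
\item Repeated departures are excluded. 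Lemma~\ref{lem:delta-variation} gives $S_{[t_n^+,t_n]}(u)\to\infty$. At the exit times $t_n^+$ one extracts a global almost periodic limit that stays within $\eta_*$ of $\mathcal W$ forward in time. By Corollary~\ref{t2.3} it is $W^+$, contradicting the finite backward lifespan of $W^+$.
\end{enumerate}
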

\begin{remark}

    First, we note that the distinction between the dimension ranges in (i) arises from the fact that $W^+\in \Sigma$ iff $d\geq7$ (as shown in Appendix \ref{Wplus}). In other words, $W^+$ is excluded from $\Sigma$ in low dimensions, so every finite-variance solution in these dimensions blows up in finite time in both directions.

     Second, the constant $\kappa_d$ in (ii) comes from the coercivity estimate in Lemma~\ref{lem:ceiling-coercivity}. In ongoing work, we obtain a larger admissible kinetic-energy region than in \eqref{hyp:ceiling}; this refinement will be addressed elsewhere.  
\end{remark}

We next outline the proofs. To this end, we define
\begin{equation}\label{def:deltaK}
 \delta(f):=\|\nabla f\|_{L_x^2}^2-\|\nabla W\|_{L_x^2}^2,
 \
 K(f):=\|\nabla f\|_{L_x^2}^2-\|f\|_{L_x^{2^*}}^{2^*},
\end{equation}
where $2^*=\frac{2d}{d-2}.$

\subsection*{Outline of the proof}
 
The proofs for the interior of the upper cone and for the upper threshold surface are  closely related. In the radial setting, the first step is to rule out global solutions for which the virial functional remains uniformly negative. Below the ground-state energy, the variational estimates provide such a uniform gap, which yields Theorem~\ref{thm:radial-subthreshold}. At the threshold, the same argument gives convergence to the ground-state manifold along a sequence of times. A no-return argument then shows that a forward-global solution must eventually remain near \(\mathcal W\), and the rigidity result in Corollary~\ref{t2.3} identifies it with \(W^+\) modulo symmetries. This proves Theorem ~\ref{thm:radial-threshold} and, in particular, gives finite-time blow-up of \(W^+\) in the negative time direction.

The main new ingredient in the radial argument is a localized virial estimate adapted to data in \(\dot H^1\), where no finite-mass assumption is made.
Following the spirit of Merle and Rapha\"el~\cite{MR08}, we control the exterior nonlinear contribution by a scale-invariant Hardy-type quantity and combine this with a flux estimate. This allows us to exclude global radial solutions with a uniform negative virial gap. At the threshold, the same control also yields the no-return property needed to pass from sequential convergence to trapping near the ground-state manifold.

The rigidity statement used above concerns threshold solutions that remain near \(\mathcal W\). Modulation and virial estimates control the scaling and translation parameters and yield exponential convergence to the ground-state manifold. The classification of exponentially convergent threshold solutions then gives \(W^+\) modulo symmetries.

For non-radial data with finite variance, a Banica-type inequality, the virial identity, and modulation estimates rule out repeated departures from the ground-state manifold. Together with the preceding rigidity statement, this yields the complete classification in Theorem~\ref{thm:general-threshold}(i).

Finally, we consider the non-radial results in dimensions \(d\ge5\) under the kinetic-energy bound involving \(\kappa_d\). The role of \(\kappa_d\) is to ensure favorable energy properties for the profiles in a concentration--compactness decomposition. Below the threshold, this leads to a minimal almost-periodic solution, which is excluded by a localized virial argument, proving the {endpoint} escape estimate in Theorem~\ref{thm:general-subthreshold}(ii). { For solutions on the upper threshold sheet, the escape result in Theorem~\ref{thm:general-subthreshold}(ii), together with the scattering theory in the interior of the lower cone, allows us to exclude nonlinear profiles with energy strictly below $E(W)$ in the concentration compactness argument. Consequently, the forward orbit is compact modulo scaling and translation symmetries, and a further virial argument yields sequential convergence to the ground-state manifold $\mathcal W$.} If the solution were to leave every small neighborhood of \(\mathcal W\), compactness and stability would produce a forward-trapped threshold solution; the rigidity result would force it to be \(W^+\), contradicting its finite backward lifespan. This proves Theorem~\ref{thm:general-threshold}(ii).

\subsection*{Organization of the paper.} Section 2 collects the variational and modulation estimates and develops the localized virial identities used throughout the paper. In Section 3, we establish a bootstrap argument and a rigidity result for threshold solutions that remain close to the ground-state manifold. Sections 4 and 5 are devoted to radial solutions in the interior of the upper cone and on the upper threshold sheet, respectively. In Section 6, we turn to non-radial solutions. We first treat the finite-variance case, then use concentration-compactness and virial arguments to establish the {endpoint} escape result and the rigidity theorem on the upper threshold sheet. Finally, Appendix A determines precisely when the special solution \(W^+\) has finite variance.

\subsection*{Acknowledgments}
Part of this work was done during Z. Ma’s visits to Southeast University in December 2025 and September 2026. C. Miao was supported by the National Key R\&D program of China: No.2022YFA1005700 and NSFC Grants 12371095 and 12531005.  K. Yang was supported by  the Jiangsu Scientific Research Center of Applied Mathematics under grant BK20233002, NSFC grant 12671287, and NSF of Jiangsu (China) grant. J. Zheng was supported by  National key R\&D program of China: 2021YFA1002500 and  NSFC Grant 12671284.

\subsection*{Declaration of AI Use}

ChatGPT 5.6 Sol was used to assist in the proof of Lemma \ref{lem:radial-tail}. 
The authors also used ChatGPT 5.6 Sol and ChatGPT 6 Astra for language editing, generating Figure 1,
and checking proofs previously developed by the authors. 
The authors reviewed all AI-assisted revisions and take full responsibility for the mathematical content and conclusions of the paper.

\section{Preliminaries and localized virial estimates}\label{sec:modulation}

In this section, we collect the notation and local theory and establish variational and modulation estimates, along with localized virial identities and related bounds.

\subsection{Notation and variational estimates} We begin by setting up some notation. For nonnegative quantities $X$ and $Y$, we write $X\lesssim Y$ to denote the estimate $X\leq C Y$ for some $C>0$. If $X\lesssim Y\lesssim X$, we will write $X\sim Y$. Dependence on various parameters will be indicated by subscripts, e.g. $X\lesssim_u Y$ indicates $X\leq CY$ for some $C=C(u)$.

The $L^2$ and $\dot H^1$ inner products are defined by $(f,g)_{L^2}=\Re\int f\bar g$ and $(f,g)_{\dot H^1}=(\nabla f,\nabla g)_{L^2}$. 
For a spacetime slab $I\times\R^d$, we write $L_t^q L_x^r(I\times\R^d)$ for the Banach space of functions $u:I\times\R^d\to\C$ equipped with the norm
\[
\|u\|_{L_t^{q}L_x^r(I\times\R^d)}:=\bigg(\int_I \|u(t)\|_{L_x^r(\R^d)}^q\,dt\bigg)^{1/q},
\]
with the usual adjustments if $q$ or $r$ is infinity. When $q=r$, we abbreviate $L_t^qL_x^q=L_{t,x}^q$ and $\|f\|_{L_x^r(\R^d)}=\|f\|_{L_x^r}.$ For $1\leq r\leq\infty$, we use $r'$ to denote the dual exponent to $r$, i.e. the solution to $\frac{1}{r}+\frac{1}{r'}=1.$

The Fourier transform on $\mathbb{R}^d$ is given by
\begin{equation*}
\aligned \widehat{f}(\xi)= \F f(\xi)= (2\pi)^{-d/2}\int_{\mathbb{R}^d}e^{- ix\cdot \xi}f(x)\,dx.
\endaligned
\end{equation*}
We then define the fractional differentiation operator $|\nabla|^s=\F^{-1}|\xi|^s\F$, with the corresponding homogeneous Sobolev norm 
\[
\|f\|_{\dot{H}_x^s(\R^d)}:=\||\nabla|^s f\|_{L_x^2(\R^d)}.
\]

We denote the free Schr\"odinger propagator by $e^{it\Delta}$.  It is most naturally defined as the Fourier multiplier operator with symbol $e^{-it|\xi|^2}$, but also has a physical-space representation given by convolution with a complex Gaussian. This operator satisfies well-known estimates known as Strichartz estimates, which we state as follows.

First, we call a pair of exponents $(q,r)$ \emph{Schr\"odinger admissible} (in dimension $d$) if $2\leq q,r\leq\infty$ and $\tfrac{2}{q}+\tfrac{d}{r}=\tfrac{d}{2}.$ For a spacetime slab $I\times\R^d$, we define the Strichartz norm 
\[
\|u\|_{S^0(I)}:=\sup\big\{\|u\|_{L_t^{q}L_x^r(I\times\R^d)}:(q,r)\text{ is Schr\"odinger admissible}\big\}.
\]
We denote $S^0(I)$ to be the closure of all test functions under this norm and write $N^0(I)$ for the dual of
$S^0(I)$. We then have the following:

\begin{proposition}[Strichartz estimates, \cite{GinibreVelo, KeelTao, Strichartz}]\label{prop1}
Let $s\geq 0$ and suppose $u:I\times\R^d\to\C$ is a solution to $(i\partial_t+\Delta)u=F$. For any $t_0\in I$, 
\[
\||\nabla|^s u\|_{S^0(I)}\lesssim\||\nabla|^s u(t_0)\|_{L_x^2(\R^d)}+\||\nabla|^s F\|_{N^0(I)}.
\]
\end{proposition}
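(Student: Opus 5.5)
The plan follows the classical route: reduce to the frequency-localized dispersive estimate for $e^{it\Delta}$, then upgrade it to the full family of admissible pairs by a $TT^*$ argument, treating the endpoint separately.

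We begin with the two basic estimates for the propagator. From the Fourier side, $e^{it\Delta}$ is unitary on $L^2_x$. From the physical-space representation as convolution with the kernel $(4\pi i t)^{-d/2}e^{i|x|^2/4t}$, we obtain the dispersive bound
\[
\|e^{it\Delta}f\|_{L^\infty_x}\lesssim |t|^{-d/2}\|f\|_{L^1_x}.
\]
Riesz--Thorin interpolation then gives $\|e^{it\Delta}f\|_{L^r_x}\lesssim |t|^{-d(\frac12-\frac1r)}\|f\|_{L^{r'}_x}$ for $2\le r\le\infty$.

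For the homogeneous estimate $\|e^{it\Delta}u_0\|_{L^q_tL^r_x}\lesssim\|u_0\|_{L^2}$, we use the $TT^*$ method. With $T f=e^{it\Delta}f$, the operator $TT^*F=\int e^{i(t-s)\Delta}F(s)\,ds$ maps $L^{q'}_tL^{r'}_x$ to $L^q_tL^r_x$. This follows from the dispersive decay together with Hardy--Littlewood--Sobolev in time, since $\frac2q=d(\frac12-\frac1r)$ and $q>2$. By duality we get the dual estimate $\|\int e^{-is\Delta}F(s)\,ds\|_{L^2}\lesssim\|F\|_{L^{q'}L^{r'}}$.

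For the inhomogeneous term $\int_{t_0}^t e^{i(t-s)\Delta}F(s)\,ds$ with non-endpoint pairs, the same HLS argument handles equal pairs. Mixed pairs $(q,r)$ and $(\tilde q,\tilde r)$ then follow from the Christ--Kiselev lemma applied to the untruncated operator, which factors as a composition of the homogeneous bound and the dual bound. Together with the Duhamel formula $u(t)=e^{i(t-t_0)\Delta}u(t_0)-i\int_{t_0}^t e^{i(t-s)\Delta}F(s)\,ds$, this gives the estimate for $s=0$, taking the supremum over admissible pairs and the dual norm $N^0$.

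\textbf{Main obstacle: the endpoint.} The hardest step is $q=2$, $r=\frac{2d}{d-2}$ (for $d\ge3$). There HLS fails and Christ--Kiselev does not apply. For this case we follow Keel--Tao. We decompose the bilinear form $\iint\langle e^{-is\Delta}F(s),e^{-it\Delta}G(t)\rangle\,ds\,dt$ dyadically in $|t-s|\sim2^j$. On each piece we prove bounds from interpolated $L^1\to L^\infty$ and $L^2\to L^2$ estimates with exponents off the endpoint, gaining geometric factors. We then sum using real (bilinear) interpolation, producing Lorentz-space estimates that yield the endpoint. The retarded version is treated the same way, restricting to $s<t$ dyadically.

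Finally, for general $s\ge0$, we note that $|\nabla|^s$ is a Fourier multiplier commuting with $e^{it\Delta}$. Since $(i\partial_t+\Delta)|\nabla|^su=|\nabla|^sF$, applying the $s=0$ estimate to $|\nabla|^s u$ gives the claim.
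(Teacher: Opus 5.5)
Your proposal is correct and follows the standard argument of the works the paper cites; the paper itself gives no proof. The ingredients are dispersive decay with $TT^*$ and Hardy--Littlewood--Sobolev off the endpoint, Christ--Kiselev for the retarded estimate with mixed pairs, Keel--Tao's dyadic bilinear interpolation at the endpoint, and commuting $|\nabla|^s$ through the equation. The one step you compress, assembling the pairwise bounds into the $S^0$/$N^0$ form, is harmless: by H\"older, $\|u\|_{L^q_tL^r_x}\le\|u\|_{L^\infty_tL^2_x}^{1-\theta}\|u\|_{L^2_tL^{2d/(d-2)}_x}^{\theta}$ with $\theta=2/q$ for every admissible pair. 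So the constants are uniform, and by duality it suffices to treat forcing in the two dual endpoint spaces.
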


We next give some basic properties for the ground state $W$.
\begin{lemma}[Sharp Sobolev embedding, \cite{Au76, Ta76}] \label{GNsharpz}
For any $f\in \dot H^1(\R^d)$, 
\begin{equation}\label{GNinequality}
\|f\|_{L^{\frac{2d}{d-2}}_x(\mathbb{R}^d)} \leq C_d \|f\|_{\dot{H}^{1}(\mathbb{R}^d)},
\end{equation}
where 
\[
C_{d}:=\|W\|_{L^{\frac{2d}{d-2}}_x(\mathbb{R}^d)} \|W\|_{\dot{H}^{1}(\mathbb{R}^d)}^{-1}.
\]
Moreover, equality holds in \eqref{GNinequality} if and only if \(f(x)=z_0  W(\lambda_0 x+x_0)\) for some  $\lambda_0>0$, $z_0\in\C$ and $x_0\in\R^d$.
\end{lemma}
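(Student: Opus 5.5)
This is the classical sharp Sobolev inequality, cited from Aubin and Talenti, so the plan is to reconstruct its standard proof rather than use anything from the paper. The strategy has three steps: reduce to nonnegative radially decreasing functions by rearrangement, identify the optimizers as rational Bubbles via a conformal (stereographic) reformulation, and read off the constant $C_d$ by evaluating at $W$.

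\textbf{Reduction.} First I would reduce to nonnegative, radially symmetric, nonincreasing functions. Replacing $f$ by $|f|$ does not increase $\|\nabla f\|_{L^2}$, since $|\nabla|f||\le|\nabla f|$ a.e. Passing to the Schwarz symmetrization $f^*$ preserves $\|f\|_{L^{2^*}}$. By the P\'olya--Szeg\H{o} inequality it does not increase $\|\nabla f\|_{L^2}$. Hence the supremum of the quotient $\|f\|_{L^{2^*}}/\|\nabla f\|_{L^2}$ may be taken over radial decreasing functions.

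\textbf{Optimizers.} Next I would show the supremum is attained and identify the optimizers. For radial decreasing $f$, the problem becomes a one-dimensional weighted inequality. Following Talenti, one can solve it by a Bliss-type inequality: substitute $r=e^{s}$ and use a mass-transport or direct calculus-of-variations argument. The optimizers are exactly the functions $a(b+r^2)^{-(d-2)/2}$. Alternatively, one can pull back to $\mathbb S^d$ by stereographic projection, which is conformally invariant. There the inequality becomes the statement that constants minimize the Yamabe quotient, and this follows from Lieb's sharp HLS/competing-symmetries argument. Either way, the extremals in the radial class are $c\,W(\lambda x)$. The constant is then computed by plugging in $W$, which gives $C_d=\|W\|_{L^{2^*}}\|W\|_{\dot H^1}^{-1}$ by definition.

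\textbf{Equality cases (main obstacle).} The hardest step is the characterization of equality without the a priori radial assumption. My plan is as follows. If $f$ attains equality, then so does $|f|$, and equality must then hold in $|\nabla|f||\le|\nabla f|$. This forces $f=e^{i\theta}|f|$ with constant phase on the support; here one uses that $|f|>0$ everywhere by the strong maximum principle applied to the Euler--Lagrange equation $-\Delta|f|=\mu|f|^{4/(d-2)}$. Any nonnegative $\dot H^1$ solution of this equation is, by the moving-plane method (Caffarelli--Gidas--Spruck) or by Obata-type rigidity on the sphere, radial about some point $x_0$. It therefore equals $z_0W(\lambda_0x+x_0)$ after normalization. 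This gives the stated characterization, with $z_0=\mu$-dependent scaling absorbed into $z_0\in\C$.
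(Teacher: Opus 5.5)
The paper gives no proof of this lemma. It is quoted from Aubin and Talenti and used purely as a black box in the variational arguments. Your reconstruction is the standard one and is correct.

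\textbf{Sharp constant.} The chain $f\mapsto|f|\mapsto f^*$ uses $|\nabla|f||\le|\nabla f|$ a.e.\ and P\'olya--Szeg\H{o}. Following it with Bliss's one-dimensional inequality, or with the conformal/competing-symmetries argument on $\mathbb{S}^d$, gives the sharp constant with $W$ as a radial extremal. Hence $C_d=\|W\|_{L^{2^*}}\|W\|_{\dot H^1}^{-1}$.

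\textbf{Equality cases.} Your argument is sound:
\begin{enumerate}
\item An extremal $f$ forces $|f|$ to be extremal and forces $|\nabla|f||=|\nabla f|$ a.e.
\item $|f|$ is then a nonnegative weak solution of the Euler--Lagrange equation, hence smooth and strictly positive (unless $f\equiv0$).
\item Writing $f=|f|g$ with $|g|=1$ gives $|\nabla f|^2=|\nabla|f||^2+|f|^2|\nabla g|^2$, so $g$ is a constant phase.
\item Caffarelli--Gidas--Spruck then identifies $|f|$.
\end{enumerate}

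Two small corrections:
\begin{itemize}
\item The Euler--Lagrange equation is $-\Delta|f|=\mu|f|^{\frac{d+2}{d-2}}$; you wrote the exponent $\frac{4}{d-2}$.
\item You should state the trivial converse. The quotient $\|f\|_{L^{2^*}}/\|f\|_{\dot H^1}$ is invariant under $f\mapsto z_0f(\lambda_0\cdot+x_0)$ with $z_0\neq0$, and $z_0=0$ is trivial, so every such function attains equality.
\end{itemize}

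\textbf{Comparison with the paper.} Citing Aubin--Talenti is all the paper needs. Your write-up makes visible that the non-radial rigidity requires an ingredient beyond symmetrization; you supply it through elliptic regularity and the moving-plane classification. An alternative that stays inside rearrangement theory and avoids the PDE machinery is the Brothers--Ziemer equality case of P\'olya--Szeg\H{o}. It applies here because $f^*$ is a rescaled $W$, whose gradient $\nabla W=-\frac{x}{d}\bigl(1+\frac{|x|^2}{d(d-2)}\bigr)^{-d/2}$ vanishes only at the maximum point. It shows directly that $|f|$ is a translate of $f^*$, and your phase argument then finishes the proof.
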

The ground state $W$ also satisfies the following Pohozaev identities:
\begin{equation*}
\|W\|_{\dot{H}^1(\R^d)}^2=\|W\|^{\frac{2d}{d-2}}_{L^{\frac{2d}{d-2}}(\R^d)} \quad \mbox{and }\ \quad E(W)=\tfrac{1}{d}\|W\|_{\dot{H}^1(\R^d)}^2.
\end{equation*}

We will need the following two elementary lemmas.

\begin{lemma}\label{lem:subthreshold-branch}
Let $u:I\times\R^d\to\C$ be a solution to \eqref{1.1} satisfying \eqref{hyp:subthreshold}. Then, for all $t\in I$,
\begin{equation}\label{supk}
 \|\nabla u(t)\|_{L_x^2}>\|\nabla W\|_{L_x^2},\quad
 K(u(t))\leq-2^*\bigl[E(W)-E(u)\bigr]<0.
\end{equation}
\end{lemma}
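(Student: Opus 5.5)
The plan is to use only conservation of energy, the sharp Sobolev inequality (Lemma~\ref{GNsharpz}), continuity of $t\mapsto\|\nabla u(t)\|_{L_x^2}$, and the Pohozaev identities for $W$.

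\textbf{Kinetic energy stays above $W$.} Set $y(t)=\|\nabla u(t)\|_{L_x^2}^2$. By Lemma~\ref{GNsharpz},
\[
E(u)\ge f(y):=\tfrac12 y-\tfrac{d-2}{2d}C_d^{2^*}y^{\frac{d}{d-2}}.
\]
Using $\|W\|_{2^*}^{2^*}=\|\nabla W\|_2^2$, one computes $C_d^{2^*}=\|\nabla W\|_2^{-4/(d-2)}$. Hence $f$ is strictly increasing on $[0,\|\nabla W\|_2^2]$, strictly decreasing afterwards, and has maximum $f(\|\nabla W\|_2^2)=\tfrac1d\|\nabla W\|_2^2=E(W)$.

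Since $E(u(t))=E(u_0)<E(W)$, we get $y(t)\neq\|\nabla W\|_2^2$ for every $t\in I$. The function $y$ is continuous on the interval $I$ and $y(0)>\|\nabla W\|_2^2$. Therefore $y(t)>\|\nabla W\|_2^2$ for all $t\in I$, which is the first inequality in \eqref{supk}.

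\textbf{Bound on $K$.} Write $K(u)=\|\nabla u\|_2^2-\|u\|_{2^*}^{2^*}$ and use the energy identity
\[
\|u\|_{2^*}^{2^*}=\tfrac{2^*}{2}\|\nabla u\|_2^2-2^*E(u).
\]
This gives
\[
K(u)=\bigl(1-\tfrac{d}{d-2}\bigr)y+2^*E(u)=-\tfrac{2}{d-2}\,y+2^*E(u).
\]
By the first step, $y>\|\nabla W\|_2^2=dE(W)$. Therefore
\[
K(u)<-\tfrac{2d}{d-2}E(W)+2^*E(u)=-2^*\bigl[E(W)-E(u)\bigr],
\]
since $2^*=\tfrac{2d}{d-2}$. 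This is the second claim of \eqref{supk}, and it is in fact a strict inequality.

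\textbf{Where care is needed.} There is no real obstacle. The only points to check are the constant computation showing that the maximum of $f$ equals $E(W)$, and the continuity argument, which uses that $I$ is an interval and $u\in C(I;\dot H^1)$.
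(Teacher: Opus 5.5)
Your proof is correct and follows the paper's argument. The sharp Sobolev inequality rules out $\|\nabla u(t)\|_{L_x^2}=\|\nabla W\|_{L_x^2}$: you phrase this through the maximum of $f$, while the paper just evaluates at that level. Continuity then propagates the strict inequality, and the identity $K(u)=2^*E(u)-\frac{2}{d-2}\|\nabla u\|_{L_x^2}^2$ together with $\|\nabla W\|_{L_x^2}^2=dE(W)$ gives the bound on $K$. Your explicit continuity step, which the paper leaves implicit, and your remark that the inequality for $K$ is actually strict are both accurate.
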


\begin{proof}
If $\|\nabla u(t_0)\|_{L_x^2}=\|\nabla W\|_{L_x^2}$ at some
time $t_0\in I$, Lemma~\ref{GNsharpz} gives
$\|u(t_0)\|_{L_x^{2^*}}^{2^*}
\leq\|\nabla W\|_{L_x^2}^2$. Then, we have
\[
 E(W)>E(u)\geq\left(\frac12-\frac{d-2}{2d}\right)
              \|\nabla W\|_{L_x^2}^2=E(W).
\]
This contradiction confirms the first assertion in \eqref{supk}. The expression of $E(u)$ and \eqref{def:deltaK} imply
\begin{equation}\label{Ku}
     K(u(t))=2^*E(u)
                -\frac2{d-2}\|\nabla u(t)\|_{L_x^2}^2.
\end{equation}
$ \|\nabla u(t)\|_{L_x^2}>\|\nabla W\|_{L_x^2}$ and
$\|\nabla W\|_{L_x^2}^2=dE(W)$ complete the proof.
\end{proof}

\begin{lemma}\label{lem:threshold-branch}
Suppose $u$ is a solution to \eqref{1.1} satisfying \eqref{hyp:bounded}. Then
\begin{equation}\label{eq:positive-delta}
 \delta(u(t))=\|\nabla u(t)\|_{L_x^2}^2-\|\nabla W\|_{L_x^2}^2>0
\end{equation}
throughout its lifespan $I$, and
\begin{equation}\label{eq:Kdelta}
 K(u(t))=-\frac2{d-2}\delta(u(t)).
\end{equation}
In particular,
\begin{equation}\label{potg}
 \|u(t)\|_{L_x^{2^*}}^{2^*}
 >\|W\|_{L_x^{2^*}}^{2^*}.
\end{equation}
If $u$ is global forward in time, then $S_{[0,\infty)}(u)=\infty$.
\end{lemma}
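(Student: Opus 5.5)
The plan follows the proof of Lemma~\ref{lem:subthreshold-branch}: first establish the sign of $\delta$ by a variational argument, then read off the identity for $K$ from the energy, and finally derive the statement about global solutions from the fact that scattering would force the kinetic energy to drop.

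\textbf{Positivity of $\delta$.} Suppose $\delta(u(t_0))\le 0$ at some $t_0\in I$. Since $t\mapsto\|\nabla u(t)\|_{L_x^2}$ is continuous and $\delta(u(0))>0$, the intermediate value theorem gives a time $t_1$ with $\|\nabla u(t_1)\|_{L_x^2}=\|\nabla W\|_{L_x^2}$. By Lemma~\ref{GNsharpz}, $\|u(t_1)\|_{L_x^{2^*}}^{2^*}\le\|W\|_{L_x^{2^*}}^{2^*}$, hence $E(u(t_1))\ge E(W)$. Energy conservation gives $E(u(t_1))=E(W)$, so equality holds in the sharp Sobolev inequality. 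Therefore $u(t_1)=z_0W(\lambda_0\cdot+x_0)$, and matching the kinetic energy forces $u(t_1)\in\mathcal W$. Then $u$ is a rescaled, translated copy of the static solution $W$, so $\|\nabla u(t)\|_{L_x^2}=\|\nabla W\|_{L_x^2}$ for all $t$. This contradicts $\delta(u(0))>0$, which proves \eqref{eq:positive-delta}.

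\textbf{The identity for $K$ and the bound \eqref{potg}.} As in \eqref{Ku}, expanding the energy gives $K(u)=2^*E(u)-\frac{2}{d-2}\|\nabla u\|_{L_x^2}^2$. Substituting $E(u)=E(W)=\frac1d\|\nabla W\|_{L_x^2}^2$ and using $2^*/d=\frac{2}{d-2}$ yields \eqref{eq:Kdelta}. Next, $\|u\|_{L_x^{2^*}}^{2^*}=\|\nabla u\|_{L_x^2}^2-K(u)=\|\nabla W\|_{L_x^2}^2+\frac{d}{d-2}\delta>\|\nabla W\|_{L_x^2}^2=\|W\|_{L_x^{2^*}}^{2^*}$, where the last equality is Pohozaev.

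\textbf{No scattering forward in time.} Suppose instead that $u$ is forward global with $S_{[0,\infty)}(u)<\infty$. Standard theory then gives $u_+\in\dot H^1$ with $\|u(t)-e^{it\Delta}u_+\|_{\dot H^1}\to0$. Free solutions satisfy $\|e^{it\Delta}u_+\|_{L_x^{2^*}}\to0$, which follows by density from dispersive decay of Schwartz data. Hence $\|u(t)\|_{L_x^{2^*}}\to0$, contradicting \eqref{potg}. Equivalently, energy conservation would force $\|\nabla u_+\|_{L_x^2}^2=2E(W)<\|\nabla W\|_{L_x^2}^2$, contradicting $\delta>0$.

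The only subtle point is the equality case in the first step. Here one needs the full characterization in Lemma~\ref{GNsharpz} together with uniqueness of solutions to conclude that the solution through an element of $\mathcal W$ stays on that orbit.
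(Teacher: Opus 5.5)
Your proof is correct and follows the paper's argument. The equality case of the sharp Sobolev inequality rules out $\delta=0$; you make explicit the intermediate-value and uniqueness steps that the paper leaves implicit. The identity for $K$ comes from $E(u)=E(W)=\frac1d\|\nabla W\|_{L_x^2}^2$, and forward scattering is excluded because $\|u(t)\|_{L_x^{2^*}}\to0$ would contradict \eqref{potg}.
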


\begin{proof}
If $\delta(u(t_0))=0$ for some $t_0\in I$, then energy conservation gives
\[
 \|\nabla u(t_0)\|_{L_x^2}^2
 =\|u(t_0)\|_{L_x^{2^*}}^{2^*}
 =\|\nabla W\|_{L_x^2}^2=\|W\|_{L_x^{2^*}}^{2^*}.
\]
By Lemma~\ref{GNsharpz}, $u(t_0)$ equals $W$ up to symmetries, and so does $u(t)$ for all $t\in I$. This contradicts {\eqref{hyp:bounded}}.
Identity \eqref{eq:Kdelta} follows directly from
$E(u)=E(W)=\|\nabla W\|_{L_x^2}^2/d$.  
$E(u)=E(W)$ and \eqref{eq:positive-delta} imply
\eqref{potg}.
If $u$ scatters forward in time, then along a sequence $t_n\to\infty$,
\[
 \|u(t_n)\|_{L_x^{2^*}}\to0.
\]
This contradicts \eqref{potg} and proves the last assertion.
\end{proof}

\subsection{Local theory} We turn to local well-posedness and stability for \eqref{1.1}. An important role is played by the $L_{t,x}^{\frac{2(d+2)}{d-2}}$ spacetime norm of solutions. The associated \emph{scattering size} is
\[
S_I(u) = \|u\|_{L_{t,x}^{\frac{2(d+2)}{d-2}}(I\times\R^d)}^{\frac{2(d+2)}{d-2}}.
\]  
In particular, solutions may be extended in time as long as this scattering size remains finite, and a global solution with finite $L_{t,x}^{\frac{2(d+2)}{d-2}}$-norm necessarily scatters in $\dot H^1$ in both directions. In light of this, we define \emph{blow-up} to mean infinite $L_{t,x}^{\frac{2(d+2)}{d-2}}$-norm.

\begin{definition}[Blow-up]\label{s2.7def}
Let $u:I\times\R^d\to\C$ be a maximal-lifespan solution to \eqref{1.1}. We say $u$ \emph{blows up forward in time} if there exists a time $t_1\in I$ such that $S_{[t_1, \sup I)}(u)=\infty$, and that $u$ \emph{blows up backward in time} if there exists a time $t_1 \in I$ such that $S_{(\inf I, t_1]}(u) = \infty.$
\end{definition}

We then have the following local well-posedness result for \eqref{1.1}.

\begin{theorem}[Local well-posedness, \cite{KVnote, VisanOberwolfach, Tao1}]\label{T:local}
Let $u_0\in\dot H_x^1(\R^d)$ and $t_0\in \R$. There exists a unique maximal-lifespan solution $u:I\times\R^d \to \C$ to
\eqref{1.1} with initial data $u|_{t=t_0}=u_0$.  Furthermore, we have the following:
\begin{enumerate}
\item[(i)] $I$ is an open neighborhood of $t_0$.
\item[(ii)] If $\sup(I)$ is finite, then $u$ blows up forward in time; if $\inf(I)$ is finite,
then $u$ blows up backward in time.
\item[(iii)] If $\sup(I)=\infty$ and $u$ does not blow up forward in time, then $u$ scatters forward in time.  The analogous statement holds backward in time.
\item[(iv)] If $\|\nabla u_0\|_{L^2}$ is sufficiently small, then $u$ is global and scatters in both time directions, with $S_\R(u)\lesssim \|\nabla u_0\|_{L^2}^{\frac{2(d+2)}{d-2}}$.
\end{enumerate}
\end{theorem}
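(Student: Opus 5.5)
The statement to be proved is the local well-posedness theorem (Theorem~\ref{T:local}). I would follow the standard Cazenave--Weissler contraction argument in Strichartz spaces, as presented in \cite{KVnote, Tao1}. The plan has four steps: build local solutions by a fixed point, glue them into a maximal solution, derive the blow-up criterion (ii) and scattering (iii) from finiteness of $S_I$, and deduce small-data global existence (iv).

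\textbf{Step 1: local existence.} Fix $t_0$ and write $u(t)=e^{i(t-t_0)\Delta}u_0+i\int_{t_0}^t e^{i(t-s)\Delta}F(u(s))\,ds$, where $F(u)=-|u|^{\frac4{d-2}}u$. I would work in the ball
\[
X=\Bigl\{u:\ \|\nabla u\|_{L_t^{\frac{2(d+2)}{d-2}}L_x^{\frac{2d(d+2)}{d^2+4}}(J)}\le 2\eta,\ \|\nabla u\|_{S^0(J)}\le 2C\|\nabla u_0\|_{L^2}\Bigr\}.
\]
Proposition~\ref{prop1} (with $s=1$) gives
\[
\|\nabla F(u)\|_{L^{\frac{2(d+2)}{d+4}}_{t,x}}\lesssim \|u\|_{L^{\frac{2(d+2)}{d-2}}_{t,x}}^{\frac4{d-2}}\|\nabla u\|_{L_t^{\frac{2(d+2)}{d-2}}L_x^{\frac{2d(d+2)}{d^2+4}}}.
\]
Sobolev embedding controls $\|u\|_{L^{\frac{2(d+2)}{d-2}}_{t,x}}$ by the $\nabla$-norm above. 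By monotone convergence, choosing $J\ni t_0$ small makes $\|\nabla e^{i(t-t_0)\Delta}u_0\|_{L_t^{\frac{2(d+2)}{d-2}}L_x^{\frac{2d(d+2)}{d^2+4}}(J)}\le\eta$, so the Duhamel map sends $X$ into itself. For the contraction I would measure differences in the weaker norm $L_t^{\frac{2(d+2)}{d-2}}L_x^{\frac{2d(d+2)}{d^2+4}}$ without derivative, where closed balls of $X$ are complete. This step is the main technical point: for $d\ge 6$ the nonlinearity is not $C^2$ (the power $\frac4{d-2}\le1$), so derivative differences cannot be estimated directly. Working without derivatives on differences avoids this, because one only needs $|F(u)-F(v)|\lesssim(|u|^{\frac4{d-2}}+|v|^{\frac4{d-2}})|u-v|$. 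Uniqueness and continuity in $\dot H^1$ then follow by the same estimates. I would cite \cite{KVnote} for these routine verifications.

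\textbf{Step 2: maximal solution and (i).} Take $I$ to be the union of all intervals carrying a solution. Uniqueness allows the local solutions to be glued together, and $I$ is open by Step 1 applied at any time in $I$.

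\textbf{Step 3: (ii) and (iii).} Suppose $\sup I<\infty$ but $S_{[t_1,\sup I)}(u)<\infty$. Split $[t_1,\sup I)$ into finitely many intervals on which $S$ is small. A bootstrap with Proposition~\ref{prop1} on each piece gives $\|\nabla u\|_{S^0}<\infty$ on the whole interval. Then $e^{-it\Delta}u(t)$ is Cauchy in $\dot H^1$ as $t\to\sup I$, since the Duhamel tail is controlled by $\|\nabla F\|_{N^0}$ on shrinking intervals. Consequently the linear evolution of the limit has small $\nabla$-Strichartz norm near $\sup I$, and Step 1 extends the solution beyond $\sup I$, contradicting maximality. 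The same Cauchy argument with $\sup I=\infty$ produces the scattering state $u_+=\lim_{t\to\infty}e^{-it\Delta}u(t)$, which proves (iii).

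\textbf{Step 4: (iv).} If $\|\nabla u_0\|_{L^2}$ is small, Strichartz makes the linear norm small on $J=\R$, so Step 1 runs globally. The bound $S_\R(u)\lesssim\|\nabla u_0\|_{2}^{\frac{2(d+2)}{d-2}}$ follows from Sobolev embedding and the estimate $\|\nabla u\|_{S^0}\le 2C\|\nabla u_0\|_2$.
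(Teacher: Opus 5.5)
Your proposal is correct. It is essentially the standard Strichartz contraction argument from the references the paper cites for this theorem; the paper gives no proof of its own, only \cite{KVnote, VisanOberwolfach, Tao1}. One bookkeeping slip: by H\"older, the right-hand side of your displayed nonlinear estimate controls $\|\nabla F(u)\|_{L_t^2L_x^{\frac{2d}{d+2}}}$, which is the dual endpoint norm and is admissible in $N^0$ for $d\ge3$; it does not control the $L_{t,x}^{\frac{2(d+2)}{d+4}}$ norm, which would instead need $\|\nabla u\|_{L_{t,x}^{\frac{2(d+2)}{d}}}$ on the right, and either fix leaves your argument intact.
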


In addition, we will need the following stability result for \eqref{1.1}.

\begin{lemma}[Stability, \cite{KVnote, VisanOberwolfach, Tao1}]\label{pertu}  For any $E,L > 0$ and $\eps > 0$ there exists $\delta > 0$
such that the following holds: Suppose $\tilde u: I \times \R^d \to \C$ satisfies
\begin{equation*}
\bigl\| \nabla \bigl[(i\partial_t+\Delta)\tilde u +|\tilde u|^{\frac4{d-2}} \tilde u \bigr] \bigr\|_{L_{t,x}^{\frac{2(d+2)}{d+4}}(I\times\R^d)} \leq \delta
\end{equation*}
as well as
\begin{align*}
\|\tilde u\|_{L_t^\infty \dot H^1_x(I\times\R^d)}\leq E \quad \text{and} \quad S_I(\tilde u)\leq L.
\end{align*}
If $t_0 \in I$ and $u_0 \in \dot H^1_x(\R^d)$ are such that
\[
\|\tilde u(t_0)-u_0\|_{\dot H^1_x(\R^d)} \leq \delta,
\]
then there exists a solution $u: I \times \R^d \to \C$ to \eqref{1.1} with $u|_{t=t_0}= u_0$ such that
\begin{align*}
\|\tilde u -u\|_{L_t^\infty \dot H^1_x(I\times\R^d)} + S_I(\tilde u - u) \leq \eps.
\end{align*}
\end{lemma}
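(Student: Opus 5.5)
This is the standard long-time perturbation (stability) lemma for the energy-critical NLS, and I would prove it as in Tao--Visan and Killip--Visan. The argument has two stages. First a short-time version assuming $S_I(\tilde u)$ is small, then an iteration over subintervals. Throughout, write $F(u)=-|u|^{\frac4{d-2}}u$ and $e=(i\partial_t+\Delta)\tilde u-F(\tilde u)$ for the error. The unknown is $w=u-\tilde u$, which solves
\[
(i\partial_t+\Delta)w=F(\tilde u+w)-F(\tilde u)-e,\qquad w(t_0)=u_0-\tilde u(t_0).
\]
The norms used are $X(J)=L_{t,x}^{\frac{2(d+2)}{d-2}}(J)$ and the derivative Strichartz norm $\|\nabla w\|_{L_t^{\frac{2(d+2)}{d-2}}L_x^{\frac{2d(d+2)}{d^2+4}}}$, which controls $X$ by Sobolev embedding. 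The dual space for the nonlinearity is $L_{t,x}^{\frac{2(d+2)}{d+4}}$, which is $N^0$-admissible.

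\textbf{Short-time step.} Assume $\|\tilde u\|_{X(J)}\le\eta_0$ with $\eta_0$ small. I would first upgrade this to smallness of $\|\nabla\tilde u\|_{S^0(J)}$, modulo the $\dot H^1$ bound $E$, by a continuity argument using Proposition~\ref{prop1} and the bound on $\nabla e$. Proposition~\ref{prop1} applied to $w$ then gives
\[
\|\nabla w\|_{S^0(J)}\lesssim \delta+\|\nabla[F(\tilde u+w)-F(\tilde u)]\|_{L^{\frac{2(d+2)}{d+4}}_{t,x}}+\delta.
\]
The difference of the derivatives is pointwise bounded by
\[
\bigl(|\tilde u|^{\frac4{d-2}}+|w|^{\frac4{d-2}}\bigr)|\nabla w| \;+\; \bigl|\,|\tilde u+w|^{\frac4{d-2}}-|\tilde u|^{\frac4{d-2}}\bigr|\,|\nabla\tilde u|.
\]
For $d\le6$ the exponent $\frac4{d-2}\ge1$. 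Then the last factor is $\lesssim |w|(|\tilde u|+|w|)^{\frac4{d-2}-1}$, and Hölder together with a bootstrap closes with $\|\nabla w\|_{S^0}\lesssim\delta$.

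\textbf{Main obstacle: $d>6$.} Here $F'$ is only Hölder continuous of order $\frac4{d-2}<1$. The naive Hölder argument produces a term like $\|w\|^{\frac4{d-2}}\|\nabla\tilde u\|$, which is not linear in $w$, so the bootstrap does not close in $\dot H^1$-level norms. Following Tao--Visan, I would instead run the bootstrap in a fractional-derivative space $\||\nabla|^{s}w\|$ with $s<\frac4{d-2}$ slightly below $1$, using exotic (non-admissible) Strichartz exponents. The error in this space is handled by fractional chain rules for Hölder-continuous functions, such as Visan's chain rule. Once $w$ is small in these auxiliary norms, one recovers the full $\dot H^1$ Strichartz bound $\|\nabla w\|_{S^0}\lesssim \delta^{c}$ with $c=c(d)>0$, by estimating $\nabla[F(\tilde u+w)-F(\tilde u)]$ directly. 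This is the step I expect to be the genuinely delicate one. Since the paper cites \cite{KVnote, VisanOberwolfach, Tao1}, I would reproduce that argument rather than invent a new one.

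\textbf{Iteration.} For general $L$, split $I$ into $N\sim(1+L/\eta_0)^{C}$ intervals $J_k$ on each of which $\|\tilde u\|_{X(J_k)}\le\eta_0$. Apply the short-time step successively, starting from $t_0$ and moving in each time direction. The input for the next interval is $\|w(t_k)\|_{\dot H^1}$ plus the error. Here $w(t_k)$ is controlled through the Duhamel formula by the accumulated previous bounds, giving a geometric growth $\delta_k\le C\delta_{k-1}^{c}$. After $N$ steps the total error is at most $C(E,L)\delta^{c^N}$. Choosing $\delta$ small enough depending on $E,L,\eps$ makes this at most $\eps$. Summing over the $N$ intervals bounds $\|w\|_{L_t^\infty\dot H^1}+S_I(w)$ by $\eps$. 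The $X$-norm bound comes from Sobolev embedding of the Strichartz norm of $\nabla w$. Existence of $u$ on all of $I$ follows because its scattering size stays finite, by Theorem~\ref{T:local}(ii).
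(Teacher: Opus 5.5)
The paper does not prove this lemma; it is quoted from \cite{KVnote, VisanOberwolfach, Tao1}. Your outline is that standard argument: a short-time perturbation step, exotic Strichartz estimates with a H\"older fractional chain rule when $d>6$, and iteration over a partition of $I$. Your iteration with a power loss $\delta_k\le C\delta_{k-1}^{c}$ at every step is acceptable for this qualitative statement, since the number of steps depends only on $(E,L)$.

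One step is wrong as written, and it matters at $d=6$. Smallness of $S_J(\tilde u)$ cannot be upgraded to smallness of $\|\nabla\tilde u\|_{S^0(J)}$. The $L_t^\infty L_x^2$ component is comparable to the energy bound, and even the non-endpoint components need not be small. For instance, take $\tilde u(t)=N^{-1}e^{it\Delta}(e^{iNx_1}\psi)$ on $[0,1]$, with $\psi$ a fixed Schwartz function. It has $\dot H^1$ norm $\sim1$, $S_{[0,1]}(\tilde u)\sim N^{-\frac{2(d+2)}{d-2}}$, and error $\lesssim N^{-\frac4{d-2}}$ in $\dot N^1$, yet $\|\nabla\tilde u\|_{L^{\frac{2(d+2)}{d}}_{t,x}([0,1])}\sim1$. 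Your continuity argument only gives $\|\nabla\tilde u\|_{S^0(J)}\lesssim E+\delta$.

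For $d\le5$ this boundedness is enough, because the factor $(|\tilde u|+|w|)^{\frac4{d-2}-1}$ supplies the small coefficient $(\eta_0+\|w\|_{X})^{\frac4{d-2}-1}$. For $d=6$ it is not enough. There the term $|w|\,|\nabla\tilde u|$ contributes $\|w\|_{X}\|\nabla\tilde u\|_{L^{\frac{2(d+2)}{d}}_{t,x}(J)}$ with an $O(1)$ coefficient, so the bootstrap does not close. The paper needs $d=6$, since its non-radial compactness arguments run for $d\ge5$.

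The standard remedy is a second partition:
\begin{enumerate}
\item[(1)] Sum the bounded pieces over the first partition to get $\|\nabla\tilde u\|_{S^0(I)}\le C(E,L)$.
\item[(2)] Refine $I$ into $C(E,L)$ intervals on which $\|\nabla\tilde u\|_{L^{\frac{2(d+2)}{d}}_{t,x}}$ is small. For $d>6$, use instead whichever auxiliary norm of $\tilde u$ the short-time step requires to be small.
\item[(3)] Run the short-time step on each of these intervals.
\end{enumerate}

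A smaller slip concerns the high-dimensional step. The fractional order must lie below the H\"older exponent $\frac4{d-2}$, which is at most $\frac45$ for $d\ge7$ and tends to $0$ as $d$ grows. So ``slightly below $1$'' is not the right regime.
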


We also write
\[
 \|u\|_{\dot S^1(I)}:=\|\nabla u\|_{S^0(I)},\qquad
 \|F\|_{\dot N^1(I)}:=\|\nabla F\|_{L_{t,x}^{\frac{2(d+2)}{d+4}}(I\times\R^d)}.
\]

\subsection{Modulation estimates}

We next collect some results related to modulation analysis for \eqref{1.1}.

For $\theta\in\R$, $\lambda>0$, and $x_0\in\R^d$, set
\[
 f_{[\theta,\lambda,x_0]}(x)
 :=e^{i\theta}\lambda^{-\frac{d-2}{2}}
 f\left(\frac{x-x_0}{\lambda}\right).
\]
Thus, if $f_{[\theta,\lambda,x_0]}=W+h$, then
\[
 f(x)=e^{-i\theta}\lambda^{\frac{d-2}{2}}(W+h)(\lambda x+x_0).
\]

\begin{lemma}[Modulation parameters, \protect\cite{DM08,MMMZ25}]\label{Mod1}
There exists $\delta_0>0$ such that, for every $f\in\dot H^1(\mathbb{R}^d)$ with
$E(f)=E(W)$ and $|\delta(f)|<\delta_0$, there is a unique triple
$(\theta,\lambda,\tilde x)\in
\mathbb{R}/2\pi\mathbb{Z}\times(0,\infty)\times\R^d$ such that
$f_{[\theta,\lambda,\tilde x]}$ lies in a fixed sufficiently small
$\dot H^1$ neighborhood of $W$ and
\[
f_{[\theta,\lambda,\tilde{x}]}\perp iW,W_1,\partial_jW,
\qquad j\in\{1,\ldots,d\},
\]
with respect to the real $\dot H^1$ inner product
$(f,g)_{\dot H^1}:=\Re\int_{\R^d}\nabla f\cdot\overline{\nabla g}\,dx$,
where
\[
W_1:=\frac{d-2}{2}W+x\cdot\nabla W.
\]
Moreover, the map $f\mapsto(\theta,\lambda,\tilde{x})$ is $C^1$.
\end{lemma}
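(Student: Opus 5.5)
The plan is an implicit function theorem argument near the ground-state orbit, combined with a variational estimate showing that the hypotheses force $f$ into a small neighborhood of that orbit.

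\emph{Step 1: implicit function theorem at $W$.} Consider the map
\[
\Phi(f,\theta,\lambda,x_0):=\Bigl((f_{[\theta,\lambda,x_0]},iW)_{\dot H^1},\,(f_{[\theta,\lambda,x_0]},W_1)_{\dot H^1},\,(f_{[\theta,\lambda,x_0]},\partial_jW)_{\dot H^1}\Bigr)\in\R^{d+2}.
\]
Rather than differentiate the action on a general $f$, which is not smooth in $f$, I will move the group action onto the fixed smooth functions $iW,W_1,\partial_jW$. Since the symmetries are $\dot H^1$ isometries,
\[
(f_{[\theta,\lambda,x_0]},g)_{\dot H^1}=(f,g_{[\theta,\lambda,x_0]^{-1}})_{\dot H^1}.
\]
This makes $\Phi$ $C^1$ in all variables, with $g\in\{iW,W_1,\partial_jW\}$.

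At $(W,0,1,0)$, the derivatives in $(\theta,\lambda,x_0)$ of $W_{[\theta,\lambda,x_0]}$ are $iW$, $-W_1$, and $-\partial_jW$. These directions are mutually $\dot H^1$-orthogonal: parity separates $\partial_jW$ from the radial functions, $iW$ is orthogonal to real functions, and $\partial_jW\perp\partial_kW$ for $j\ne k$. Hence the Jacobian is diagonal with nonzero entries $\|W\|_{\dot H^1}^2$, $\|W_1\|_{\dot H^1}^2$ and $\|\partial_jW\|_{\dot H^1}^2$. Here $W_1\in\dot H^1$ requires $d\ge3$; this holds since $W_1\sim|x|^{-(d-2)}$ decays like $W$.

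The IFT then gives $\eps_0>0$ such that every $f$ with $\|f-W\|_{\dot H^1}<\eps_0$ admits a unique $C^1$ choice of $(\theta,\lambda,x_0)$ near $(0,1,0)$ with $f_{[\theta,\lambda,x_0]}$ satisfying the orthogonality conditions. Symmetry covariance extends this to the tube $\{f:\operatorname{dist}_{\dot H^1}(f,\mathcal W)<\eps_0\}$: if $f$ is close to $W_{[\theta_1,\lambda_1,x_1]}$, apply the construction to $f_{[\theta_1,\lambda_1,x_1]^{-1}}$ and compose. Uniqueness, in the sense of parameters for which $f_{[\theta,\lambda,\tilde x]}$ lies in the fixed small neighborhood of $W$, follows from the local uniqueness in the IFT. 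It also uses the fact that the orbit map $(\theta,\lambda,x_0)\mapsto W_{[\theta,\lambda,x_0]}$ is a homeomorphism onto $\mathcal W$ with $\theta$ taken mod $2\pi$. So two admissible parameter choices must be close, and local uniqueness then identifies them.

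\emph{Step 2: variational closeness.} This is the main obstacle. I need that $E(f)=E(W)$ and $|\delta(f)|<\delta_0$ imply $\operatorname{dist}_{\dot H^1}(f,\mathcal W)<\eps_0$.

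From $E(f)=E(W)$ we get
\[
\|f\|_{2^*}^{2^*}=\tfrac{d}{d-2}\bigl(\|\nabla f\|_2^2-\tfrac2d\|\nabla W\|_2^2\bigr),
\]
so $\|f\|_{2^*}^{2^*}\to\|W\|_{2^*}^{2^*}$ and $\|\nabla f\|_2\to\|\nabla W\|_2$ as $\delta(f)\to0$. Thus $f$ is an almost-optimizer of the sharp Sobolev inequality, Lemma~\ref{GNsharpz}. I will argue by contradiction. Suppose $f_n$ has $\delta(f_n)\to0$ but $\operatorname{dist}(f_n,\mathcal W)\ge\eps_0$. By the profile decomposition, or Lions' concentration compactness for Sobolev optimizing sequences, there exist symmetry parameters with $(f_n)_{[\theta_n,\lambda_n,x_n]}\to zW$ strongly in $\dot H^1$ up to subsequence. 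The equality case of Lemma~\ref{GNsharpz} forces the limit to be $zW$ with $|z|=1$ after normalization, which gives the contradiction. Alternatively, one can cite the quantitative Bianchi–Egnell stability estimate directly.

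Choosing $\delta_0$ accordingly finishes the existence and uniqueness statement.

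\emph{Step 3: regularity of the parameter map.} The $C^1$ regularity of $f\mapsto(\theta,\lambda,\tilde x)$ is inherited from the IFT together with the smooth covariance used in Step 1.
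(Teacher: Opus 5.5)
Your proposal is correct. It is essentially the standard argument behind this lemma, which the paper does not prove itself but imports from \cite{DM08,MMMZ25}: compactness of Sobolev-optimizing sequences modulo symmetries, together with the Aubin--Talenti equality case, places $f$ near $\mathcal W$ when $E(f)=E(W)$ and $\delta(f)$ is small, and the implicit function theorem at $W$ (with the diagonal Jacobian coming from the mutual $\dot H^1$-orthogonality of $iW$, $W_1$, $\partial_jW$) then fixes the $d+2$ parameters uniquely and in a $C^1$ way. The only slip is cosmetic: the $\lambda$- and $x_0$-entries of that Jacobian are $-\|W_1\|_{\dot H^1}^2$ and $-\|\partial_jW\|_{\dot H^1}^2$, which does not affect invertibility.
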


We suppose $u:I\times\R^d\to\C$ is a solution to \eqref{1.1} satisfying
\begin{equation}\label{u-supercritical}
E(u)=E(W), \|\nabla u(t)\|_{L_x^2}>\|\nabla W\|_{L_x^2}\qtq{and} \delta(u(t))<\delta_0 \qtq{on}I.
\end{equation}
We further define
\begin{equation}\label{defAorth}
\mathcal{A}:=\{W, iW, W_1, \partial_{1} W, \cdots, \partial_{d} W\}.
\end{equation}
We choose $(\theta(t),\lambda(t),x(t))$ corresponding to $u(t)$ as in Lemma~\ref{Mod1} and further define $\alpha(t)$ via
\[
1+\alpha(t)=\frac{1}{\|W\|_{\dot H^1}^2}(u_{[\theta(t),\lambda(t),x(t)]},W)_{\dot{H}^1}.
\]
We can then write
\begin{equation}\label{ortho}
u_{[\theta(t),\lambda(t),x(t)]}(t)=\bigl[1+\alpha(t)\bigr]W+\tilde{u}(t),\qtq{with} \tilde u(t)\in \mathcal{A}^\perp.
\end{equation}
Finally, we define $v(t)$ by
\[
v(t):=\alpha(t)W+\tilde{u}(t)=u_{[\theta(t),\lambda(t),x(t)]}(t)-W.
\]

We will need various estimates on the modulation parameters, which we record in the following lemma.

\begin{lemma}[Modulation estimates, \protect\cite{DM08,MMMZ25}]\label{modula}
Suppose $u:I\times\R^d\to\C$ is a solution to \eqref{1.1} satisfying \eqref{u-supercritical}. If $\delta_0$ is sufficiently small, then the following estimates hold on $I$:
\begin{align}
|\alpha(t)|\sim \|v(t)\|_{\dot{H}^1(\R^d)}\sim \|\tilde{u}(t)\|_{\dot{H}^1(\R^d)} \sim \delta(u(t)), \label{equivq1} \\
\bigl|{x^{\prime}(t)}-\tfrac{\lambda^{\prime}(t)}{\lambda(t)}x(t)\bigr|+|\alpha^{\prime}(t)|+|\theta^{\prime}(t)|+\bigl|\tfrac{\lambda^{\prime}(t)}{\lambda(t)}\bigr|\lesssim\lambda(t)^2 \delta(u(t)).\label{Mod2}
\end{align}
\end{lemma}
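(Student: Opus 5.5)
The plan is to follow the standard Duyckaerts--Merle modulation scheme. We linearize the equation around the modulated ground state and use the coercivity of the linearized energy on $\mathcal A^\perp$.

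\textbf{Step 1: size equivalences \eqref{equivq1}.} Energy conservation with $E(u)=E(W)$ and the fact that $W$ is a critical point of $E$ give
\[
0=E(W+v)-E(W)=\tfrac12 Q(v)+O(\|v\|_{\dot H^1}^3),
\]
where $Q$ is the quadratic form of $E''(W)$. For $v=\alpha W+\tilde u$, the form $Q$ is negative in the direction $W$ and vanishes on the symmetry directions $iW,W_1,\partial_jW$. By the known spectral coercivity, $Q(\tilde u)\gtrsim\|\tilde u\|_{\dot H^1}^2$ for $\tilde u\in\mathcal A^\perp$. Since $W$ is $\dot H^1$-orthogonal to $\mathcal A^\perp$, the quadratic expansion reduces to
\[
\alpha^2\,Q(W)+Q(\tilde u)=O(|\alpha|^3+\|\tilde u\|^3).
\]
This yields $\|\tilde u\|_{\dot H^1}\lesssim|\alpha|$, hence $\|v\|_{\dot H^1}\sim|\alpha|$. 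Next, $\delta(u)=\|\nabla(W+v)\|_2^2-\|\nabla W\|_2^2=2\alpha\|W\|_{\dot H^1}^2+\|v\|_{\dot H^1}^2$. So $\delta\sim|\alpha|$ once $|\alpha|$ is small. This smallness follows from $\delta<\delta_0$ together with the compactness argument behind Lemma~\ref{Mod1}: a small $\delta$ at threshold energy forces $u$ to be close to $\mathcal W$ by the sharp Sobolev characterization, Lemma~\ref{GNsharpz}. The upper-sheet assumption gives $\alpha>0$.

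\textbf{Step 2: parameter equations.} Rescale time by $ds=\lambda^{-2}dt$. Then $v$ satisfies
\[
\partial_s v+\mathcal L v=\text{modulation terms}+R(v),
\]
where $\mathcal L$ is the linearized operator and $R(v)=O(|v|^2)$ pointwise (this needs $d\le6$; for larger $d$ one uses fractional-power estimates). The modulation terms take the form
\[
\tfrac{\lambda_s}{\lambda}W_1+\theta_s\, iW+\bigl(\tfrac{x_s}{\lambda}\bigr)\cdot\nabla W,
\]
plus these same terms multiplied by $v$. Differentiate the orthogonality relations $(v,iW)_{\dot H^1}=(v,W_1)_{\dot H^1}=(v,\partial_jW)_{\dot H^1}=0$ and the definition of $\alpha$ in $s$, and substitute the equation. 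This produces a linear system for $(\theta_s,\lambda_s/\lambda,x_s/\lambda,\alpha_s)$ whose matrix is the Gram matrix of $\mathcal A$ plus $O(\|v\|)$. The Gram matrix is invertible, so the system can be solved. The right-hand side is $O(\|v\|_{\dot H^1})$, because $\mathcal Lv$ paired against smooth decaying functions is bounded by $\|v\|_{\dot H^1}$. Moreover, the zeroth-order terms vanish since $\mathcal L$ annihilates the generators $iW,W_1,\partial_jW$.

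\textbf{Step 3: conversion and main obstacle.} Converting back to $t$ introduces the factor $\lambda^2$. The $x$-equation comes out in the combination $x'-\frac{\lambda'}{\lambda}x$, because of how the centering is written in $f_{[\theta,\lambda,x_0]}$. Together with Step 1 this gives \eqref{Mod2}. The delicate point I expect is controlling the pairings $(R(v),\cdot)$ and $(\mathcal Lv,\cdot)$ in high dimensions using only $\dot H^1$ bounds. The plan there is to integrate by parts onto the smooth, decaying $\mathcal A$-elements and use Hölder's inequality with $L^{2^*}$, which works since $W^{4/(d-2)}$ decays fast enough. Since these facts are established in \cite{DM08,MMMZ25}, the proof in the paper can be reduced to citing them.
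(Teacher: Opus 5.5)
Your proposal is the standard Duyckaerts--Merle modulation argument: coercivity of the linearized energy on $\mathcal A^\perp$ gives \eqref{equivq1}, and differentiating the orthogonality conditions and inverting the Gram matrix gives \eqref{Mod2}; this is exactly what the paper relies on, since it gives no proof of its own and simply cites \cite{DM08,MMMZ25}. Three small corrections: under the paper's convention $u(t,x)=e^{-i\theta}\lambda^{\frac{d-2}{2}}(W+v)(t,\lambda x+x(t))$ the rescaled time is $ds=\lambda^{2}\,dt$ (not $\lambda^{-2}dt$), which is what produces the factor $\lambda(t)^2$ in \eqref{Mod2}; the $v$-independent terms in the $v$-equation drop out because $\Delta W+W^{\frac{d+2}{d-2}}=0$, not because $\mathcal L$ annihilates the generators; and you omit the lower bound $\|\tilde u\|_{\dot H^1}\gtrsim|\alpha|$ required in \eqref{equivq1}, which follows from your identity, since $\frac{4}{d-2}\|W\|_{\dot H^1}^2\alpha^2=-\alpha^2Q(W)=Q(\tilde u)+o(\alpha^2)\lesssim\|\tilde u\|_{\dot H^1}^2+o(\alpha^2)$.
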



\subsection{Localized virial identities and estimates}

Fix a nonnegative radial function $\varphi\in C_c^\infty(\R^d)$
and a sufficiently large constant $C_0>2$ 
such that
\begin{equation}\label{start1}
 \begin{gathered}
 \varphi(x)=|x|^2\quad(|x|\leq1),\qquad
 \varphi(x)=0\quad(|x|\geq C_0),\qquad
  \varphi''(r)\leq2,\\
 0\leq\varphi(x)\leq|x|^2,\qquad
 |\partial^\alpha\varphi(x)|\lesssim_\alpha|x|^{2-|\alpha|},
 \end{gathered}
\end{equation}
where the radial derivative is defined by $\varphi'(r)$ with $r=|x|$.

For $R>0$, set $\varphi_R(x):=R^2\varphi(x/R)$ and define
\begin{equation}\label{eq:localized-virial-definitions}
 V_R(f):=\int_{\R^d}\varphi_R|f|^2\,dx,\qquad
 M_R(f):=2\Im\int_{\R^d}\nabla\varphi_R\cdot\nabla f\,\bar f\,dx.
\end{equation}

\begin{lemma}[Localized virial identities, \cite{DM08}]\label{lem:localized-virial}
Let $u$ be a solution to \eqref{1.1}. Then
\begin{align}
 \frac{d}{dt}V_R(u(t))&=M_R(u(t)),\label{eq:localized-variance}\\
 \frac{d}{dt}M_R(u(t))&=8K(u(t))+F_R(u(t)),\label{eq:localized-virial}
\end{align}
where
\begin{align}
 F_R(f)
 &:=-\int_{\R^d}\Delta^2\varphi_R|f|^2\,dx
 -\frac4d\int_{\R^d}(\Delta\varphi_R-2d)
 |f|^{2^*}\,dx\notag\\
 &\quad+4\Re\int_{\R^d}
 (\partial_{jk}\varphi_R-2\delta_{jk})
 \partial_jf\,\overline{\partial_kf}\,dx,
 \label{eq:virial-remainder}
\end{align}
and repeated indices are summed. If $E(u)=E(W)$, then
\begin{equation}\label{thlocalv}
 \frac{d}{dt}M_R(u(t))
 =-\frac{16}{d-2}\delta(u(t))+F_R(u(t)).
\end{equation}
For every fixed element $\widetilde W$ of the ground-state manifold,
\begin{equation}\label{eq:ground-state-virial-zero}
 M_R(\widetilde W)=F_R(\widetilde W)=0.
\end{equation}
\end{lemma}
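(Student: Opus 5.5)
Both identities \eqref{eq:localized-variance} and \eqref{eq:localized-virial} come from differentiating under the integral, using the equation, and integrating by parts. The last two claims, \eqref{thlocalv} and \eqref{eq:ground-state-virial-zero}, then follow by algebra. I would first prove everything for smooth, rapidly decaying solutions, and at the end pass to general $\dot H^1$ solutions by approximation.

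\textbf{Step 1: the variance identity.} Write $\partial_t u = i\Delta u + i|u|^{4/(d-2)}u$. Then
\[
\partial_t|u|^2 = 2\Re(\bar u\,\partial_t u) = -2\Im(\bar u\,\Delta u) = -2\nabla\cdot\Im(\bar u\nabla u),
\]
since the nonlinear term contributes a purely imaginary multiple of $|u|^2$. Integrating against $\varphi_R$ and integrating by parts gives
\[
\tfrac{d}{dt}V_R = 2\Im\int\nabla\varphi_R\cdot\nabla u\,\bar u\,dx = M_R .
\]

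\textbf{Step 2: the virial identity.} Differentiate $M_R$ and use the standard Morawetz tensor computation. The result is
\[
\tfrac{d}{dt}M_R = 4\Re\int\partial_{jk}\varphi_R\,\partial_j u\,\overline{\partial_k u}\,dx - \int\Delta^2\varphi_R|u|^2\,dx - \tfrac{4}{d}\int\Delta\varphi_R|u|^{2^*}\,dx .
\]
Here the nonlinear coefficient is computed from $p=4/(d-2)$ via $\frac{2p}{p+2}=\frac{4}{d}$.

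Now add and subtract the constant-coefficient terms $8\int|\nabla u|^2$ and $-8\int|u|^{2^*}$. These equal the contributions one would get by replacing $\partial_{jk}\varphi_R$ by $2\delta_{jk}$ and $\Delta\varphi_R$ by $2d$. What remains is exactly $8K(u)+F_R(u)$, which is \eqref{eq:localized-virial}.

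If $E(u)=E(W)$, then by \eqref{eq:Kdelta} (equivalently \eqref{Ku}), $K(u)=-\frac{2}{d-2}\delta(u)$. Multiplying by $8$ gives \eqref{thlocalv}.

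\textbf{Step 3: vanishing on the ground-state manifold.} Let $\widetilde W$ be a phase, scaling and translation of $W$. It is a constant phase times a real function, so $\Im(\nabla\widetilde W\,\overline{\widetilde W})=0$, and hence $M_R(\widetilde W)=0$.

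Next, $\widetilde W$ is a stationary solution of \eqref{1.1}; it solves \eqref{equ:CP}, which is invariant under these symmetries. Along a stationary solution, $M_R(\widetilde W)$ is constant in time. By Step 2 its time derivative, namely $8K(\widetilde W)+F_R(\widetilde W)$, must therefore vanish. Finally, $K(\widetilde W)=0$: this is the Pohozaev identity, and it is invariant under the symmetries. Hence $F_R(\widetilde W)=0$.

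\textbf{Step 4: justification for $\dot H^1$ data.} This is the only delicate point. There is no $L^2$ assumption, so $V_R(u)$ and the $\int\Delta^2\varphi_R|u|^2$ term must be shown to be finite. Since $\varphi_R$ is compactly supported and $\dot H^1\subset L^{2^*}\subset L^2_{loc}$, all integrals are finite and depend continuously on $u$ in $\dot H^1$.

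The argument is then:
\begin{enumerate}
\item Approximate $u_0$ by Schwartz data.
\item Use the stability result, Lemma~\ref{pertu}, to get convergence of the corresponding solutions in $C_t\dot H^1_x$ on compact subintervals.
\item Pass to the limit in the integrated-in-time form of the identities.
\end{enumerate}
Differentiability in $t$ then follows because the right-hand sides are continuous in time.

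I expect the main obstacle to be bookkeeping rather than ideas: getting the constants $\frac{4}{d}$ and $8$ right so that the remainder takes exactly the form \eqref{eq:virial-remainder}.
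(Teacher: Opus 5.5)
Your proposal is correct and follows the same route as the paper. The two identities come from direct computation with the equation, and your constants $\frac{4}{d}$, $8$, and the coefficient $-1$ on $\Delta^2\varphi_R$ all check out. Then \eqref{thlocalv} is the algebraic identity \eqref{eq:Kdelta}, and \eqref{eq:ground-state-virial-zero} follows because $\widetilde W$ is real up to a constant phase and stationary, with $K(\widetilde W)=0$ by Pohozaev.

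Your Step 4 supplies a justification the paper leaves implicit. One caveat: $|u|^{\frac{4}{d-2}}u$ is not smooth for $d\geq5$, so solutions with Schwartz data need not stay Schwartz. For the approximating solutions you should therefore appeal to persistence of (limited) regularity or a standard mollification argument.
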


\begin{proof}
Direct computations involving \eqref{1.1} yield \eqref{eq:localized-variance} and \eqref{eq:localized-virial}.

Clearly, \eqref{thlocalv} follows from  \eqref{eq:localized-virial} and \eqref{eq:Kdelta}. A fixed ground state is stationary and real-valued
up to symmetries, showing \eqref{eq:ground-state-virial-zero}.
\end{proof}
We collect several elementary estimates that will be used together with these identities. In the following, no radial symmetry is assumed and we introduce the  \emph{exterior Hardy mass} 
\begin{equation}\label{eq:radial-Hardy-tail}
 H_R(f):=\int_{\R^d}\chi\left(\frac{|x|}{R}\right)
 \frac{|f(x)|^2}{|x|^2}\,dx,
\end{equation}
where  $\chi\in C^\infty([0,\infty))$ satisfies
\[
 0\leq\chi\leq1,\qquad
 \chi(r)=0\ (r\leq\tfrac12),\qquad
 \chi(r)=1\ (r\geq1),\qquad
 |\chi'(r)|^2\lesssim\chi(r).
\]
The last condition is obtained by squaring a smooth cutoff with the
first three properties. 
Hardy's inequality shows that $V_R(f)$ and $H_R(f)$ are finite for
$f\in\dot H^1(\mathbb{R}^d)$.

\begin{lemma}[Basic virial estimates]\label{lem:virial-estimates}
For $f,g\in\dot H^1(\R^d)$ and $R>0$,
\begin{align}
 |M_R(f)|&\lesssim R^2\|\nabla f\|_{L_x^2}^2,
 \label{eq:virial-M-bound}\\
 |M_R(f)-M_R(g)|
 &\lesssim R^2(\|\nabla f\|_{L_x^2}+\|\nabla g\|_{L_x^2})
               \|f-g\|_{\dot H^1},
 \label{eq:virial-action-difference}\\
 |F_R(f)|&\lesssim\int_{|x|\geq R}
  |\nabla f|^2+|f|^{2^*}
             +\frac{|f|^2}{|x|^2} \,dx,
 \label{eq:virial-tail}\\
 F_R(f)&\lesssim H_R(f)+\int_{|x|>R}|f|^{2^*}\,dx.
 \label{oneside}
\end{align}
We also have
\begin{equation}\label{eq:virial-tail-L2}
 \begin{aligned}
 |F_R(f)|
 \lesssim\int_{|x|\geq R}
          (|\nabla f|^2+|f|^{2^*})\,dx
 +\bigl(\int_{R\leq|x|\leq C_0 R}
          |f|^{2^*}\,dx\bigr)^{\frac{d-2}{d}}.
 \end{aligned}
\end{equation}
For a ground-state $\widetilde W$ in the ground-state manifold and $h\in\dot H^1(\mathbb{R}^d)$,
\begin{equation}\label{VED}
 \begin{aligned}
 |F_R(\widetilde W+h)|
 &\lesssim\left(\|\nabla\widetilde W\|_{L_x^2(|x|>R)}
       +\|\widetilde W/|x|\|_{L_x^2(|x|>R)}\right)\|h\|_{\dot H^1}
       +\|h\|_{\dot H^1}^2\\
 &\quad+\|\widetilde W\|_{L_x^{2^*}(|x|>R)}^{\frac{d+2}{d-2}}
         \|h\|_{\dot H^1}
       +\|h\|_{\dot H^1}^{2^*}.
 \end{aligned}
\end{equation}
In particular, if $E(f)=E(W)$ and $\delta(f)>0$, then
\begin{equation}\label{rac0}
 |M_R(f)|\lesssim R^2\delta(f).
\end{equation}
Finally, for each fixed $f\in\dot H^1(\mathbb{R}^d)$,
\begin{equation}\label{eq:virial-initial-smallness}
 H_R(f)\to0,\qquad
 V_R(f)=o(R^4),\qquad M_R(f)=o(R^2)
 \quad(R\to\infty).
\end{equation}
The constants depend only on $d$ and the fixed cutoff functions.
\end{lemma}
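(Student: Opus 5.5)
We prove the estimates one by one, starting from explicit properties of $\varphi_R$. Since $\varphi_R(x)=R^2\varphi(x/R)$, \eqref{start1} gives $|\nabla\varphi_R|\lesssim |x|$, $|\partial^2\varphi_R|\lesssim 1$ and $|\Delta^2\varphi_R|\lesssim |x|^{-2}$, with $\nabla\varphi_R$ supported in $|x|\le C_0R$. In addition, $\partial_{jk}\varphi_R=2\delta_{jk}$, $\Delta\varphi_R=2d$ and $\Delta^2\varphi_R=0$ on $|x|\le R$.

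\emph{Bounds on $M_R$.} For \eqref{eq:virial-M-bound} we use $|\nabla\varphi_R|\lesssim R$ on its support $|x|\le C_0R$, so that
\[
|M_R(f)|\lesssim R\int_{|x|\le C_0R}|\nabla f||f|\lesssim R^2\|\nabla f\|_2\,\||x|^{-1}f\|_2\lesssim R^2\|\nabla f\|_2^2
\]
by Cauchy--Schwarz and Hardy. For \eqref{eq:virial-action-difference} we write $\nabla f\,\bar f-\nabla g\,\bar g=\nabla(f-g)\bar f+\nabla g\,\overline{(f-g)}$ and apply the same two inequalities to each term.

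\emph{Bounds on $F_R$.} Every integrand in \eqref{eq:virial-remainder} vanishes on $|x|<R$, and the coefficients are $O(1)$ there (or $O(|x|^{-2})$ for $\Delta^2\varphi_R$); this gives \eqref{eq:virial-tail}. For \eqref{eq:virial-tail-L2}, the term $\int|\Delta^2\varphi_R||f|^2$ is supported in $R\le|x|\le C_0R$ with coefficient $\lesssim R^{-2}$. Hölder then gives
\[
\int|\Delta^2\varphi_R||f|^2\lesssim R^{-2}\|f\|_{L^{2^*}(R\le|x|\le C_0R)}^2\,|B_{C_0R}|^{2/d},
\]
which is of the claimed form.

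For the one-sided bound \eqref{oneside} we use $\varphi''\le 2$, and here the main obstacle appears. If $\varphi'(r)/r\le 2$ also held, the Hessian term would satisfy $\partial_{jk}\varphi_R-2\delta_{jk}\le 0$ and could be dropped. To get this we check (or impose, as part of ``the fixed cutoff'') that $\varphi'(r)/r\le 2$ follows from $\varphi''\le2$ and $\varphi'(0)=0$ by integration: $\varphi'(r)=\int_0^r\varphi''\le 2r$. The Hessian is $\varphi''\,\hat x\otimes\hat x+(\varphi'/r)(I-\hat x\otimes\hat x)$, so it is $\le 2I$. What remains is bounded by $|x|^{-2}|f|^2$ on $|x|\ge R$, which is at most $H_R(f)$ (where $\chi=1$), plus $\int_{|x|>R}|f|^{2^*}$.

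\emph{Perturbative bound \eqref{VED}.} Set $f=\widetilde W+h$ and expand $F_R(\widetilde W+h)-F_R(\widetilde W)$. We have $F_R(\widetilde W)=0$ by \eqref{eq:ground-state-virial-zero}. The quadratic terms split into a cross part, bounded by Cauchy--Schwarz by tails of $\widetilde W$ (namely $\nabla\widetilde W$ and $\widetilde W/|x|$ on $|x|>R$) times $\|h\|_{\dot H^1}$ via Hardy, and an $h$-$h$ part of size $\|h\|_{\dot H^1}^2$. The $L^{2^*}$ term uses
\[
\bigl||a+b|^{2^*}-|a|^{2^*}\bigr|\lesssim |a|^{2^*-1}|b|+|b|^{2^*}
\]
together with Hölder and Sobolev.

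\emph{Consequences.} For \eqref{rac0}, Lemma~\ref{Mod1} applies if $\delta(f)<\delta_0$. In that case $f$ is $\widetilde W+h$ with $\|h\|_{\dot H^1}\sim\delta(f)$ by Lemma~\ref{modula}, and $M_R(\widetilde W)=0$. Then \eqref{eq:virial-action-difference} gives $|M_R(f)|\lesssim R^2\delta(f)$, using that $\|\nabla f\|_2$ is bounded. If instead $\delta(f)\ge\delta_0$, then \eqref{eq:virial-M-bound} gives
\[
|M_R(f)|\lesssim R^2\|\nabla f\|_2^2=R^2\bigl(\|\nabla W\|_2^2+\delta(f)\bigr)\lesssim R^2\delta(f),
\]
since $\|\nabla W\|_2^2\lesssim\delta_0^{-1}\delta(f)$.

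Finally, for \eqref{eq:virial-initial-smallness}, $H_R(f)\le\int_{|x|>R/2}|f|^2/|x|^2\to0$ by dominated convergence, since Hardy makes $|f|^2/|x|^2$ integrable. For $V_R$ and $M_R$, split at $|x|=\sqrt R$:
\[
V_R(f)\lesssim \int_{|x|\le\sqrt R}|x|^4\frac{|f|^2}{|x|^2}+R^4\int_{|x|>\sqrt R}\frac{|f|^2}{|x|^2}\le R^2\||x|^{-1}f\|_2^2+R^4\,o(1)=o(R^4).
\]
For $M_R$, the inner region contributes $\sqrt R\,\|\nabla f\|_2\||x|^{-1}f\|_2$. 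On the outer region we use $|\nabla\varphi_R|\lesssim |x|\cdot|x|/|x|$, bound the integrand by $R^2|\nabla f|\,|f|/|x|$, and apply Cauchy--Schwarz on $|x|>\sqrt R$, where both factors are $o(1)$ tails. Together these give $M_R(f)=o(R^2)$.
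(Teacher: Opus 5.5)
Your proposal is correct and follows essentially the same route as the paper. The differences are cosmetic: you use Hardy's inequality where the paper uses H\"older plus Sobolev for the $M_R$ bounds and a density argument for $V_R=o(R^4)$, $M_R=o(R^2)$, and you explicitly derive $\varphi'(r)/r\le 2$ (hence $\nabla^2\varphi_R\le 2I$) from $\varphi''\le 2$, which the paper leaves implicit. There is one harmless slip: the inner region $|x|\le\sqrt R$ in your $M_R$ estimate contributes $R\,\|\nabla f\|_{L_x^2}\||x|^{-1}f\|_{L_x^2}$ rather than $\sqrt R\,\|\nabla f\|_{L_x^2}\||x|^{-1}f\|_{L_x^2}$, since $|\nabla\varphi_R|\,|f|\lesssim |x|^2\,|f|/|x|\le R\,|f|/|x|$ there; this is still $o(R^2)$, so the conclusion stands.
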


\begin{proof}
The definition of $\varphi_R$ together with
H\"older's inequality and Sobolev embedding yield
\begin{align*}
 |M_R(f)|
 &\lesssim R\|\nabla f\|_{L_x^2}
                 \|f\|_{L_x^2(|x|\leq C_0 R)}
 \lesssim R^2\|\nabla f\|_{L_x^2}^2,\\
 |M_R(f)-M_R(g)|
 &\lesssim R\int_{|x|\leq C_0 R}
      (|\nabla f|\,|f-g|+|\nabla(f-g)|\,|g|)\,dx\\
 &\lesssim R^2(\|\nabla f\|_{L_x^2}+\|\nabla g\|_{L_x^2})
                 \|f-g\|_{\dot H^1}.
\end{align*}

All coefficients in \eqref{eq:virial-remainder} vanish on $|x|<R$.
Moreover, $\Delta^2\varphi_R$ is supported in
$R\leq|x|\leq C_0 R$ and satisfies
$|\Delta^2\varphi_R|\lesssim R^{-2}\lesssim|x|^{-2}$ there.
This proves \eqref{eq:virial-tail}. By the definition of $\varphi$, in particular the bound $\varphi''(r)\le2$, $\nabla^2\varphi_R-2I$ is negative semi-definite, its contribution in $F_R(f)$ is
non-positive. The same bounds and $\chi(|x|/R)=1$ when $|x|\geq R$
then prove \eqref{oneside}.
For \eqref{eq:virial-tail-L2}, estimate the truncated mass term instead by
\[
 \left|\int_{\R^d}\Delta^2\varphi_R|f|^2\,dx\right|\lesssim  \int_{R\leq|x|\leq C_0 R}\frac{|f|^2}{R^2}\,dx
 \lesssim
 \bigl(\int_{R\leq|x|\leq C_0 R}
 |f|^{2^*}\,dx\bigr)^{\frac{d-2}{d}}.
\]

Note that $F_R(\widetilde W)=0$, \eqref{VED} follows from the triangle inequality and
\begin{align*}
 \int_{|x|>R}(2|\nabla\widetilde W|\,|\nabla h|+|\nabla h|^2)\,dx
 &\lesssim\|\nabla\widetilde W\|_{L_x^2(|x|>R)}\|h\|_{\dot H^1}
          +\|h\|_{\dot H^1}^2;
\end{align*}
\begin{align*}
 \left|\int\Delta^2\varphi_R
       (|\widetilde W+h|^2-|\widetilde W|^2)\,dx\right|
 &\lesssim\int_{|x|>R}
       \frac{2|\widetilde W|\,|h|+|h|^2}{|x|^2}\,dx\\
 &\lesssim\|\widetilde W/|x|\|_{L_x^2(|x|>R)}\|h\|_{\dot H^1}
           +\|h\|_{\dot H^1}^2;
\end{align*}
\begin{align*}
 \int_{|x|>R}
 \bigl||\widetilde W+h|^{2^*}
              -|\widetilde W|^{2^*}\bigr|\,dx  &\lesssim\int_{|x|>R}
   (|\widetilde W|^{\frac{d+2}{d-2}}|h|
           +|h|^{2^*})\,dx\\
 & \lesssim
 \|\widetilde W\|_{L_x^{2^*}(|x|>R)}^{\frac{d+2}{d-2}}
 \|h\|_{\dot H^1}+\|h\|_{\dot H^1}^{2^*}.
\end{align*}

Suppose $E(f)=E(W)$ and $0<\delta(f)<\delta_0$.
Lemma~\ref{modula}, applied at the initial time of the solution
with data $f$, gives a ground state $\widetilde W$ such that
$\|f-\widetilde W\|_{\dot H^1}\lesssim\delta(f)$.
Using \eqref{eq:virial-action-difference},
$M_R(\widetilde W)=0$, and
$\|\nabla f\|_{L_x^2}^2=\|\nabla W\|_{L_x^2}^2+\delta(f)$,
we obtain \eqref{rac0}. If $\delta(f)\geq\delta_0$,
then \eqref{eq:virial-M-bound} gives
\[
 |M_R(f)|\lesssim R^2(\|\nabla W\|_{L_x^2}^2+\delta(f))
 \lesssim R^2\delta(f).
\]

For a fixed $f\in\dot H^1(\mathbb{R}^d)$, as $R\to\infty$, Hardy's inequality and dominated
convergence imply $H_R(f)\to0$, and {a density argument} yields
\[
 R^{-2}\int_{|x|\leq C_0 R}|f(x)|^2\,dx\to0.
\]
Consequently,
\begin{align*}
 V_R(f)&\lesssim R^2\int_{|x|\leq C_0 R}|f|^2\,dx=o(R^4),\\
 |M_R(f)|&\lesssim R\|\nabla f\|_{L_x^2}
                  \|f\|_{L_x^2(|x|\leq C_0 R)}=o(R^2).
\end{align*}
This proves \eqref{eq:virial-initial-smallness}.
\end{proof}

\begin{lemma}[Exterior Hardy mass estimates]\label{lem:Hardy-flux}
Let $u:J\times\R^d\to\C$ be a solution to \eqref{1.1}. For $R>0$
and $t_1<t_2$ in $J$,
\begin{equation}\label{rHroot}
 H_R(u(t_2))^{1/2}
 \leq H_R(u(t_1))^{1/2}
       +\frac C{R^2}\int_{t_1}^{t_2}\|\nabla u(t)\|_{L_x^2}\,dt.
\end{equation}
If $E(u)=E(W)$ and $\delta(u(t))>0$, then
\begin{equation}\label{refHardy}
 \left|\frac d{dt}H_R(u(t))\right|
 \lesssim R^{-2}\delta(u(t)).
\end{equation}
\end{lemma}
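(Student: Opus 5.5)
We differentiate $H_R(u(t))$ in time using the equation, integrate by parts once, and then treat the two claims separately. Write $\psi_R(x):=\chi(|x|/R)|x|^{-2}$. Since $\chi$ vanishes for $|x|\le R/2$, $\psi_R$ is smooth and bounded, with
\[
|\psi_R|\lesssim R^{-2},\qquad |\nabla\psi_R|\lesssim |x|^{-3}\mathbf 1_{\{|x|\ge R/2\}}.
\]
The local mass conservation law $\partial_t|u|^2=-2\nabla\cdot\Im(\bar u\nabla u)$ then gives, after one integration by parts,
\[
\frac{d}{dt}H_R(u)=2\Im\int\nabla\psi_R\cdot\nabla u\,\bar u\,dx .
\]
The nonlinear term drops out because it is real after multiplication by $\bar u$. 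We first justify this identity for smooth, decaying data and pass to $\dot H^1$ solutions by well-posedness. All terms are continuous in $\dot H^1$: Hardy's inequality controls $u/|x|$, and the weight is supported away from the origin.

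\textbf{Proof of \eqref{rHroot}.} Use the pointwise bound
\[
|\nabla\psi_R|\lesssim R^{-2}\,\frac{\chi(|x|/R)^{1/2}}{|x|}+R^{-2}\,\frac{\mathbf 1_{\{R/2\le|x|\le R\}}|\chi'(|x|/R)|}{|x|}.
\]
On the first term we use $|x|^{-3}\le R^{-1}\cdot 2|x|^{-2}$ together with $\chi\le\chi^{1/2}$. On the second we use the structural assumption $|\chi'|^2\lesssim\chi$ to write $|\chi'|\lesssim\chi^{1/2}$. Either way we obtain $|\nabla\psi_R|\lesssim R^{-2}\chi(|x|/R)^{1/2}|x|^{-1}$, with room to spare: the $R^{-2}$ comes from $|x|\gtrsim R$ and $|\nabla\psi_R|\sim|x|^{-3}$.

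Cauchy--Schwarz then gives
\[
\Big|\frac{d}{dt}H_R\Big|\le \frac{C}{R^2}\,\|\nabla u\|_{L^2}\,H_R(u)^{1/2}.
\]
Hence $\frac{d}{dt}\sqrt{H_R}\le \frac{C}{2R^2}\|\nabla u\|_{L^2}$ wherever $H_R>0$. Integrating, and handling zeros of $H_R$ by applying the argument to $\sqrt{H_R+\epsilon}$ and letting $\epsilon\to0$, yields \eqref{rHroot}.

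\textbf{Proof of \eqref{refHardy}.} This is the step needing care. When $\delta(u(t))\ge\delta_0$, the previous bound together with Hardy's inequality gives
\[
|\partial_tH_R|\lesssim R^{-2}\|\nabla u\|^2\lesssim R^{-2}\big(\|\nabla W\|^2+\delta\big)\lesssim R^{-2}\delta.
\]
When $0<\delta(u(t))<\delta_0$, we argue as in the proof of \eqref{rac0}. The map $f\mapsto 2\Im\int\nabla\psi_R\cdot\nabla f\,\bar f$ vanishes on the ground-state manifold, since each $\widetilde W$ is real up to a constant phase. It is a bounded bilinear form in $\dot H^1$ with norm $\lesssim R^{-2}$ by the weight bound and Hardy's inequality. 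By Lemma~\ref{modula} (through \eqref{equivq1}) we can write $u(t)=\widetilde W+h$ with $\|h\|_{\dot H^1}\lesssim\delta(u(t))$. Expanding the bilinear form and using its vanishing at $\widetilde W$ gives
\[
\Big|\frac{d}{dt}H_R\Big|\lesssim R^{-2}\big(\|\widetilde W\|_{\dot H^1}+\|h\|_{\dot H^1}\big)\|h\|_{\dot H^1}\lesssim R^{-2}\delta(u(t)).
\]

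The main obstacle is the pointwise weight bound. It is exactly where the assumption $|\chi'|^2\lesssim\chi$ is needed, since it lets us avoid a derivative landing on the region where $\chi$ is small without a matching $\chi^{1/2}$ factor. In the linearization step, the only subtlety is that the bilinear form vanishes at $\widetilde W$ only because $\widetilde W$ has constant phase.
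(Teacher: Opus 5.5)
Your proposal is correct and follows essentially the same route as the paper: the same flux identity, the same weight bound $|\nabla\psi_R|\lesssim R^{-2}\chi(|x|/R)^{1/2}|x|^{-1}$ obtained from $|\chi'|^2\lesssim\chi$, then Cauchy--Schwarz and integration of $\sqrt{H_R}$. You also use the same split between $\delta\ge\delta_0$ (Hardy) and $\delta<\delta_0$ (expansion around the modulated ground state $\widetilde W$, whose constant phase kills the leading term). Your $\sqrt{H_R+\epsilon}$ regularization and the density justification of the time-derivative identity only make explicit what the paper leaves implicit.
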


\begin{proof}
{A computation analogous to the derivation of \eqref{eq:localized-variance} gives}
\[
 \frac d{dt}H_R(u(t))
 =2\Im\int_{\R^d}
 \nabla\bigl(\frac{\chi(|x|/R)}{|x|^2}\bigr)
 \cdot\nabla u\,\bar u\,dx.
\]
For $r=|x|>0$, direct differentiation yields
\[
 \nabla\bigl(\frac{\chi(r/R)}{r^2}\bigr)
 =\bigl(\frac{\chi'(r/R)}{Rr^2}
              -\frac{2\chi(r/R)}{r^3}\bigr)\frac{x}{r}.
\]
Both terms vanish for $r\leq R/2$. 
Recall  $ |\chi'(r)|^2\lesssim\chi(r)\le 1$, we have
\begin{align*}
 \left|\nabla\bigl(\frac{\chi(r/R)}{r^2}\bigr)\right|^2
 &\lesssim\frac{|\chi'(r/R)|^2}{R^2r^4}
                 +\frac{\chi(r/R)^2}{r^6}
 \lesssim R^{-4}\frac{\chi(r/R)}{r^2}.
\end{align*}
Cauchy--Schwarz then implies
\begin{equation}\label{Hrdbd}
 \left|\frac d{dt}H_R(u(t))\right|
 \lesssim R^{-2}H_R(u(t))^{1/2}\|\nabla u(t)\|_{L_x^2}.
\end{equation}
Then \eqref{rHroot} follows from integrating this inequality.

For \eqref{refHardy}, if
$0<\delta(u(t))<\delta_0$ and denote 
$$
\widetilde{W}(t):= \lambda(t)^{\frac{d-2}{2}} e^{-i\theta(t)} W(\lambda(t) x + x(t)).
$$ 
Lemma~\ref{modula} and Hardy's inequality yield 
\begin{align*}
 \left|\frac d{dt}H_R(u(t))\right|
 &\lesssim R^{-2}\| u(t)\|_{\dot H^1}
              \|\tfrac{u(t)-\widetilde W}{|x|}\|_{L_x^2}  +R^{-2}\| u(t)-\widetilde W\|_{\dot H^1}
              \|\tfrac{\widetilde W}{|x|}\|_{L_x^2}\\
 &\lesssim R^{-2}\delta(u(t)).
\end{align*}
If $\delta(u(t))\geq\delta_0$, \eqref{Hrdbd} then implies
\[
 \left|\frac d{dt}H_R(u(t))\right|
 \lesssim R^{-2}\|\nabla u(t)\|_{L_x^2}^2
 \lesssim R^{-2}\delta(u(t)).
\]
This completes the proof.
\end{proof}

\section{A bootstrap argument}\label{sec:local-rigidity}

In this section, we prove a rigidity result, which asserts that a supercritical threshold solution that is global forward in time and stays sufficiently close to the ground-state orbit must coincide with $W^+$ modulo symmetries.

Throughout this section, we suppose that $u:I\times\R^d\to\mathbb{C}$ is a solution to \eqref{1.1} satisfying $E(u)=E(W)$ and $\|\nabla u(0)\|_{L_x^2}>\|\nabla W\|_{L_x^2}$.

For any interval $\tilde{I}\subset I$ satisfying
\[
\sup_{t\in \tilde{I}} \delta(u(t)) \leq \eta_*,
\]
for some $\eta_{\ast}\ll1$, by Lemma~\ref{modula}, there exist continuous functions
\begin{equation*}
\theta : \tilde{I}\rightarrow \R,\quad x:  \tilde{I}\rightarrow \mathbb{R}^d ,\quad \lambda: \tilde{I}\rightarrow \mathbb{R}_+, \quad \alpha: \tilde{I}\rightarrow \mathbb{R}
\end{equation*}
such that if
\begin{equation}\label{decompz}
v(t):=\alpha(t)W+\tilde{u}(t)=u_{[\theta(t),\lambda(t),x(t)]}(t)-W,
\end{equation}
then
\begin{align}
&|\alpha(t)|\sim \|v(t)\|_{\dot{H}^1}\sim \|\tilde{u}(t)\|_{\dot{H}^1}\sim \delta(u(t)),\label{mo1}\\
&\bigl|{x^{\prime}(t)}-\tfrac{\lambda^{\prime}(t)}{\lambda(t)}x(t)\bigr|+|\alpha^{\prime}(t)|+|\theta^{\prime}(t)|+\bigl|\tfrac{\lambda^{\prime}(t)}{\lambda(t)}\bigr|\leq C\lambda(t)^2 \delta(u(t)). \label{mo2}
\end{align}

After a time translation and a fixed spatial translation, we may assume
\[
 \inf{\tilde{I}}=0\qtq{and}x(0)=0.
\]
We now run a bootstrap argument based on a localized virial estimate (incorporating the modulation analysis) to prove that the scale  and translation parameters are nearly constant. {Unlike ~\cite{MMMZ25}, the following theorem does not require an a priori bound on the scaling parameter.}
\begin{theorem}[Bootstrap regime]\label{Ret2.3}
The following holds provided $\eta_*$ is sufficiently small.  On any interval $[0,T]\subset \tilde{I}$ such that
\[
\max_{t\in[0, T]}|{x(t)}|\leq 1,
\]
we have
\begin{equation}
\max_{t\in[0, T]}\lambda(t)\leq\tfrac{3}{2}\min_{t\in[0, T]}\lambda(t),\quad  \max\limits_{t\in[0, T]}|{x(t)}|\leq \tfrac{1}{2}.
\end{equation}

Consequently,
\begin{equation}\label{bootse1}
\sup_{t\in\tilde I}\lambda(t)\leq\tfrac32\inf_{t\in\tilde I}\lambda(t),\quad
\sup_{t\in\tilde I}|x(t)|\leq\tfrac12.
\end{equation}
\end{theorem}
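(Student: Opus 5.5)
The plan is to control the modulation parameters by showing that $\int_0^T \lambda(t)^2\delta(u(t))\,dt$ is small, using the localized virial identity \eqref{thlocalv}. Integrating \eqref{mo2} would then give the conclusion. The key point is that $\delta(u(t))\le\eta_*$ makes $\frac{d}{dt}M_R = -\frac{16}{d-2}\delta + F_R$, with $F_R$ small relative to $\delta$ once $R$ is large in the scale of the modulated ground state.

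\textbf{Step 1: the virial bound.} Work on $[0,T]$ with $|x(t)|\le1$, and set $\lambda_{\min}:=\min_{[0,T]}\lambda$. By the convention of \eqref{decompz}, $u(t)$ is close to $\widetilde W(t)=e^{-i\theta}\lambda^{\frac{d-2}2}W(\lambda x+x(t))$, which is concentrated at scale $\lambda^{-1}$ around $-x(t)/\lambda$, a point of size at most $\lambda_{\min}^{-1}$. Choose $R=\rho/\lambda_{\min}$ with $\rho\gg1$ fixed.

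Apply \eqref{VED} with $h=u-\widetilde W$, so that $\|h\|_{\dot H^1}\sim\delta$. The tail norms of $\widetilde W$ outside $|x|>R$ are then $o_\rho(1)$, uniformly in $t$, because $R\lambda(t)\ge\rho$ and the center lies well inside. This gives $|F_R(u)|\le \epsilon(\rho)\delta+C\delta^2$. Hence for $\eta_*$ small and $\rho$ large,
\[
\frac{d}{dt}M_R(u(t))\le -\frac{8}{d-2}\delta(u(t)).
\]
Integrating and using \eqref{rac0} yields
\[
\int_0^T\delta(u(t))\,dt\lesssim R^2\bigl(\delta(u(0))+\delta(u(T))\bigr)\lesssim \frac{\rho^2}{\lambda_{\min}^2}\,\eta_*.
\]

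\textbf{Step 2: closing the bootstrap.} From \eqref{mo2} we get $|(\log\lambda)'|\le C\lambda^2\delta\le C\lambda_{\max}^2\delta$. The obstacle is the appearance of $\lambda_{\max}^2/\lambda_{\min}^2$, which is not known a priori. No bound on $\lambda$ is assumed, unlike \cite{MMMZ25}, and this is the main difficulty. I would handle it by continuity: let $T_1\le T$ be maximal such that $\max_{[0,T_1]}\lambda\le 2\min_{[0,T_1]}\lambda$. On $[0,T_1]$,
\[
\bigl|\log\lambda(t)-\log\lambda(s)\bigr|\le C\cdot 4\rho^2\eta_*,
\]
which is below $\log(3/2)$ for $\eta_*$ small. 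By continuity of $\lambda$ this forces $T_1=T$ and gives the $3/2$ bound. The key fact is that the argument is scale-invariant: only ratios of $\lambda$ enter, so no absolute bound on $\lambda$ is needed.

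For the translation parameter, write $\frac{d}{dt}\bigl(x/\lambda\bigr)=\bigl(x'-\frac{\lambda'}{\lambda}x\bigr)/\lambda$, whose size is at most $C\lambda\delta$. Integrating gives
\[
\Bigl|\frac{x(t)}{\lambda(t)}\Bigr|\lesssim \lambda_{\max}\frac{\rho^2\eta_*}{\lambda_{\min}^2},
\]
and therefore $|x(t)|\lesssim \rho^2\eta_*\,\lambda_{\max}^2/\lambda_{\min}^2\lesssim\rho^2\eta_*\le\tfrac12$, using $x(0)=0$. This normalization is exactly where the scaled quantity $x/\lambda$ helps.

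\textbf{Step 3: the consequence \eqref{bootse1}.} A standard continuity argument finishes. The set of $T$ with $\max_{[0,T]}|x|\le1$ is closed. On it, the bound improves to $\frac12$, so it is also open in $\tilde I$. Thus it is all of $\tilde I$, and \eqref{bootse1} follows.

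One point needs care: the uniform smallness of the tails of $\widetilde W$ outside $|x|\ge R$. This relies on $|x(t)|\le1$ together with $R\ge\rho/\lambda_{\min}$. Depending on normalization, the center may instead be $x(t)$ in unscaled coordinates; in that case I would take $R=\rho/\lambda_{\min}+2$, and the same estimates go through.
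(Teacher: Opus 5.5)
Your proof is correct and follows the paper's argument. A localized virial estimate at radius $R\sim\lambda_{\min}^{-1}$, controlled via \eqref{VED} and \eqref{rac0}, gives $\int_0^T\delta(u(t))\,dt\lesssim\eta_*\lambda_{\min}^{-2}$, and integrating \eqref{mo2} together with a continuity argument closes the bootstrap.

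The paper avoids your inner continuity argument on $\lambda_{\max}/\lambda_{\min}$ by integrating $\bigl|\frac{d}{dt}\lambda^{-2}\bigr|\lesssim\delta(u(t))$, which gives $\lambda_{\min}^{-2}-\lambda_{\max}^{-2}\lesssim\eta_*\lambda_{\min}^{-2}$ directly. Your closing hedge is moot, since \eqref{decompz} fixes the center at $-x(t)/\lambda(t)$; note also that the alternative $R=\rho/\lambda_{\min}+2$ would not yield a scale-invariant bound without an a priori upper bound on $\lambda$.
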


\begin{proof}
Set
\[
 \lambda_{\min}:=\min_{t\in[0,T]}\lambda(t),\qquad
 \lambda_{\max}:=\max_{t\in[0,T]}\lambda(t).
\]
Let $M_R$ and $F_R$ be as in Lemma~\ref{lem:localized-virial}, and write
\[
 \widetilde W(t,x):=e^{-i\theta(t)}\lambda(t)^{\frac{d-2}{2}}
 W(\lambda(t)x+x(t)),\qquad h(t):=u(t)-\widetilde W(t).
\]
By \eqref{mo1}, $\|h(t)\|_{\dot H^1}\lesssim\delta(u(t))$.
We show that for a sufficiently large fixed $C$ and
$R=C\lambda_{\min}^{-1}$,
\begin{equation}\label{step2}
 \frac{d}{dt}M_R(u(t))\leq-\frac8{d-2}\delta(u(t)).
\end{equation}
Fix $0<\eta<1$. Since $|x(t)|\leq1$ and $R\lambda(t)\geq C$,
the change of variables $y=\lambda(t)x+x(t)$ gives, for $C$
sufficiently large,
\begin{align*}
 & \int_{|x|\geq R} 
 |\nabla\widetilde W(t,x)|^2
 +|\widetilde W(t,x)|^{2^*}
 +\frac{|\widetilde W(t,x)|^2}{|x|^2} \,dx \\
 = &\int_{|y-x(t)|\geq R\lambda(t)} 
 | \nabla W(y)|^2
 +| W(y)|^{2^*}
 +\frac{| W(y)|^2}{|y-x(t)|^2} \,dy\leq\eta^2.
\end{align*}
Indeed, from $R=C\lambda_{\min}^{-1}$,  $|y-x(t)|\geq R\lambda(t)\ge  C$, and consequently
$|y|\geq C-1$ as $|x(t)|\le 1$. 
By \eqref{VED}, we obtain
\begin{align*}
 |F_R(u(t))|
 &\lesssim \eta\|h(t)\|_{\dot H^1}
 +\|h(t)\|_{\dot H^1}^2
 +\eta^{\frac{d+2}{d}}\|h(t)\|_{\dot H^1}
 +\|h(t)\|_{\dot H^1}^{2^*}\\
 &\lesssim(\eta+\eta_*)\delta(u(t)).
\end{align*}
Choose $\eta$ sufficiently small, fix the corresponding $C$, and then
choose $\eta_*$ sufficiently small. 
Then \eqref{step2} follows from \eqref{thlocalv} directly.

Integrating \eqref{step2} and using \eqref{rac0}, we find
\[
 \int_0^T\delta(u(t))\,dt
 \lesssim\lambda_{\min}^{-2}
 \bigl[\delta(u(0))+\delta(u(T))\bigr]
 \lesssim  \frac{\eta_*}{\lambda_{\min}^2}.
\]
The implicit constants may depend on the fixed choice of $C$ {but are independent of $\eta_*$}. Together with \eqref{mo2}, this yields
\[
 \frac{1}{\lambda^2_{\min}}-\frac{1}{\lambda^2_{\max}}\le \int_0^T\left|\frac{\lambda'(t)}{\lambda(t)^3}\right|\,dt \lesssim \int_0^T \delta(u(t))\,dt
\lesssim  \frac{\eta_*}{\lambda_{\min}^2}.
\]
Taking smaller $\eta_*$ if necessary, we obtain
$\lambda_{\max}\leq\frac32\lambda_{\min}$. Similarly,
\begin{align*}
 \int_0^T|x'(t)|\,dt
 &\leq\int_0^T\left|x'(t)-\frac{\lambda'(t)}{\lambda(t)}x(t)\right|\,dt
 +\int_0^T\left|\frac{\lambda'(t)}{\lambda(t)}\right||x(t)|\,dt\\
& \lesssim \int_0^T \lambda_{\max}^2\delta(u(t))\,dt \lesssim\eta_*\lambda_{\min}^{-2}\lambda_{\max}^2
 \lesssim\eta_*.
\end{align*}
Since $x(0)=0$, taking smaller $\eta_*$ if necessary gives
$\sup_{t\in[0,T]}|x(t)|\leq\frac12$. A continuity argument then completes the proof.
\end{proof}

As a corollary, a solution on {the upper threshold sheet} that is global forward in time and remains sufficiently close to the ground-state manifold must coincide with $W^+$ modulo symmetries. We postpone its proof to the end of this section.
\begin{corollary}\label{t2.3} Suppose that $u$ is a solution to \eqref{1.1} satisfying $E(u)=E(W)$ and $\|\nabla u(0)\|_{L_x^2}>\|\nabla W\|_{L_x^2}$.
Suppose that $u$ is global forward in time.  Then there exists $\eta_*>0$ sufficiently small such that the following holds:  If
\begin{equation}\label{ppl}
    \sup_{t\in[0,\infty)} \delta(u(t)) \leq \eta_*,
\end{equation}
then there exist $\lambda_0>0$, $\theta_0\in\R$, $x_0\in\R^d$, and $T\in\R$ such that
\[
u(t,x)=e^{-i\theta_0}\lambda_0^{\frac{d-2}{2}}W^{+}(\lambda_0^2(t-T),\lambda_0x+x_0).
\]
\end{corollary}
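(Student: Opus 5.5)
Our plan is to show that $\delta(u(t))$ decays exponentially as $t\to\infty$, and then to invoke the classification of exponentially convergent threshold solutions from \cite{DM08,MMMZ25}.

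\textbf{Step 1: frozen modulation parameters.} By \eqref{ppl} we apply Theorem~\ref{Ret2.3} on $\tilde I=[0,\infty)$, after the normalisation $x(0)=0$. This gives
\[
\lambda(t)\in[\lambda_*,\tfrac32\lambda_*],\qquad |x(t)|\le\tfrac12
\]
for all $t\ge0$, where $\lambda_*=\inf\lambda$.

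\textbf{Step 2: integrated bound on $\delta$.} We repeat the virial argument of the proof of Theorem~\ref{Ret2.3} on $[t,T]$ with the fixed radius $R=C\lambda_*^{-1}$. Estimate \eqref{step2} holds on all of $[0,\infty)$. Integrating it and using \eqref{rac0} at both endpoints gives, for all $0\le t<T$,
\[
\int_t^T\delta(u(s))\,ds\lesssim \lambda_*^{-2}\bigl[\delta(u(t))+\delta(u(T))\bigr].
\]

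\textbf{Step 3: removing the right endpoint.} We need $\delta(u(T))\to0$ along some sequence $T_n\to\infty$. Taking $t=0$ in Step 2 shows $\int_0^\infty\delta\,ds<\infty$, and this holds uniformly in $T$ because $\delta\le\eta_*$. Hence $\liminf_{T\to\infty}\delta(u(T))=0$. Letting $T=T_n\to\infty$ yields
\[
\int_t^\infty\delta(u(s))\,ds\le C\lambda_*^{-2}\delta(u(t)).
\]
Since $\lambda$ is bounded above and below, this is a Gronwall-type inequality for $f(t)=\int_t^\infty\delta$, namely $f\le -c f'$. It gives $f(t)\lesssim e^{-ct}$.

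\textbf{Step 4: pointwise exponential decay.} We upgrade the integral decay to $\delta(u(t))\lesssim e^{-ct}$. By \eqref{mo1}, $\delta\sim|\alpha|$, and by \eqref{mo2}, $|\alpha'|\lesssim\lambda^2\delta\lesssim\delta$. Hence
\[
|\alpha(t)|\le\int_t^\infty|\alpha'|\,ds\lesssim e^{-ct},
\]
where we use $\alpha(T_n)\to0$. The same integration with \eqref{mo2} shows that $\lambda(t)$, $\theta(t)$ and $x(t)$ converge exponentially to limits $\lambda_\infty$, $\theta_\infty$, $x_\infty$; for $x(t)$ we use the bound on $x'-\frac{\lambda'}{\lambda}x$ together with $|x|\le\frac12$. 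Consequently,
\[
\bigl\|u(t)-e^{-i\theta_\infty}\lambda_\infty^{\frac{d-2}2}W(\lambda_\infty\cdot+x_\infty)\bigr\|_{\dot H^1}\lesssim e^{-ct}.
\]

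\textbf{Step 5: conclusion.} The classification of threshold solutions converging exponentially to $\mathcal W$ from \cite{DM08,MMMZ25} then applies. Rescaled by these limits, $u$ equals $W^+$, $W^-$ or $W$ up to time translation. The second and third alternatives are excluded by $\delta(u)>0$ (Lemma~\ref{lem:threshold-branch}), which leaves the stated formula with $W^+$.

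\textbf{Main obstacle.} We expect Step 3 to be the delicate point. It requires $\delta(u(T_n))\to0$ without any a priori decay, and we obtain this from integrability. We must also make sure that the classification theorem being cited needs only $\dot H^1$ exponential convergence, with no finite-mass hypothesis, in $d=3,4$. If it requires more, we would rerun its linearised spectral argument with the modulated decomposition \eqref{decompz}.
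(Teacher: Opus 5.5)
Your proposal is correct and follows essentially the same route as the paper: Theorem~\ref{Ret2.3} on $[0,\infty)$, the modulated virial bound $\int_{t_1}^{t_2}\delta(u(s))\,ds\lesssim\delta(u(t_1))+\delta(u(t_2))$, integrability to produce $T_n$ with $\delta(u(T_n))\to0$, Gronwall, and integration of $\alpha'$ and \eqref{mo2} (this is Proposition~\ref{exp}), followed by the classification of exponentially convergent threshold solutions. Your worry in Step 5 is handled by Proposition~\ref{2WW}. The paper states it on the upper sheet with only $\dot H^1$ exponential convergence as hypothesis, with no finite-mass assumption, so it yields $W^+$ directly.
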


We now use Theorem~\ref{Ret2.3} to prove the exponential convergence of the modulation parameters for threshold solutions that stay sufficiently close to the ground-state manifold.

\begin{proposition}[Exponential convergence]\label{exp} Suppose that $u$ satisfies the assumptions of Corollary~\ref{t2.3}, including \eqref{ppl}. Then there exist $\lambda_0>0$, $x_0\in\R^d$, $\theta_0\in\R$, and ${c}_0>0$ such that
\begin{equation}\label{exp-convergence}
\begin{aligned}
&|x(t)-x_0|+|\tfrac{\lambda(t)}{\lambda_0}-1|+|\theta(t)-\theta_0|\lesssim e^{-{c}_0 t},\\
& |\alpha(t)|\sim\delta(u(t))\lesssim e^{-{c}_0 t}.
\end{aligned}
\end{equation}

\end{proposition}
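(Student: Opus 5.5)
The plan is to apply Theorem~\ref{Ret2.3} on $\tilde I=[0,\infty)$, which is admissible by \eqref{ppl}. After normalizing $x(0)=0$, it gives $\lambda(t)\in[\lambda_*,\tfrac32\lambda_*]$ and $|x(t)|\le\tfrac12$ for all $t\ge0$. With these uniform bounds, the localized virial argument from the proof of Theorem~\ref{Ret2.3} holds with a single fixed radius $R=C\lambda_*^{-1}$ for all times. Precisely, for $t\ge0$ we have
\[
\frac{d}{dt}M_R(u(t))\le -\frac{8}{d-2}\delta(u(t)),\qquad |M_R(u(t))|\lesssim R^2\delta(u(t)),
\]
where the second bound is \eqref{rac0}. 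Integrating the first inequality over $[t,T]$ and then letting $T\to\infty$ yields the tail estimate
\[
\int_t^\infty \delta(u(s))\,ds\lesssim R^2\bigl(\delta(u(t))+\limsup_{T\to\infty}\delta(u(T))\bigr).
\]

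The next step is to remove the $\limsup$ term. The global integral bound $\int_0^\infty\delta\,ds\lesssim R^2\eta_*<\infty$ gives $\liminf_{T\to\infty}\delta(u(T))=0$. To upgrade this to $\delta(u(T))\to0$, I use the Lipschitz control of $\delta$ via $\alpha$. By \eqref{mo1}, $\delta\sim|\alpha|$, and by \eqref{mo2}, $|\alpha'|\lesssim\lambda^2\delta\lesssim\lambda_*^2\delta$. Hence $|\alpha|$ is integrable and satisfies $|\alpha'|\lesssim|\alpha|$, which forces $\alpha(t)\to0$. Concretely, for $t<T$,
\[
|\alpha(t)|\le|\alpha(T)|+\int_t^T|\alpha'|\,ds,
\]
so $|\alpha(t)|\lesssim\int_t^\infty\delta\,ds+\liminf_{T\to\infty}|\alpha(T)|=\int_t^\infty\delta\,ds\to0$. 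The tail bound then becomes $\int_t^\infty\delta(u(s))\,ds\le C_1\delta(u(t))$. Setting $g(t)=\int_t^\infty\delta\,ds$, this reads $g\le -C_1g'$, which gives $g(t)\lesssim e^{-t/C_1}$.

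Pointwise decay of $\delta$ follows from the same Lipschitz control:
\[
\delta(u(t))\sim|\alpha(t)|\le\int_t^\infty|\alpha'|\,ds\lesssim g(t)\lesssim e^{-c_0t}.
\]
This proves the second line of \eqref{exp-convergence}.

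For the parameters, \eqref{mo2} together with the boundedness of $\lambda$ and $x$ gives $|\lambda'|+|\theta'|+|x'|\lesssim\delta(u(t))$, which is exponentially integrable. Therefore $\lambda(t)\to\lambda_0\in[\lambda_*,\tfrac32\lambda_*]$, $x(t)\to x_0$, and $\theta(t)\to\theta_0$, each with rate $e^{-c_0t}$. Here $\theta$ is taken as the continuous lift to $\R$; since $\theta'$ is integrable, the limit $\theta_0$ exists. Because $\lambda_0>0$, the estimate $|\lambda/\lambda_0-1|\lesssim e^{-c_0t}$ follows.

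The main obstacle is the second step, namely ruling out the boundary term at infinity in the virial integration. The approach I commit to is the integrability-plus-Lipschitz argument for $\alpha$ described above. The one delicate point there is that \eqref{mo2} bounds $\alpha'$ by $\lambda^2\delta$, so the uniform upper bound on $\lambda$ coming from Theorem~\ref{Ret2.3} is essential.
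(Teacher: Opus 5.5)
Your proposal is correct and follows essentially the same route as the paper. Both arguments apply Theorem~\ref{Ret2.3} on $[0,\infty)$, use the fixed-radius modulated virial bound $\int_{t_1}^{t_2}\delta(u(s))\,ds\lesssim\delta(u(t_1))+\delta(u(t_2))$, derive the tail bound $\int_t^\infty\delta(u(s))\,ds\lesssim\delta(u(t))$ and apply Gronwall, and then use $|\alpha'|\lesssim\delta$ for pointwise decay and integrate \eqref{mo2} for the parameters. The only difference is that the paper works along a sequence $T_n\to\infty$ with $\delta(u(T_n))\to0$. Equivalently, it takes a $\liminf$ rather than a $\limsup$, which is legitimate because $\int_t^T\delta$ is monotone in $T$. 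Your extra step proving $\alpha(t)\to0$ along the full limit is therefore valid but unnecessary, so the ``main obstacle'' you identify is not really one.
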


\begin{proof}

Applying Theorem~\ref{Ret2.3} with $\tilde{I}=[0,\infty)$ and recalling that we have normalized to  $x(0)=0$, we have
\[
\lambda(t)\sim \lambda(0) \qtq{and} |x(t)| \leq 1\qtq{for all}t\geq 0.
\]
We now repeat the modulated virial estimate used to prove Theorem~\ref{Ret2.3} to obtain the estimate
\[
\int_{t_1}^{t_2} \delta(u(t))\,dt \lesssim \delta(u(t_1))+\delta(u(t_2))
\]
for any $t_2>t_1\geq 0$. In particular, $t\mapsto \delta(u(t))\in L_t^1((0,\infty))$, so that there exists $T_n\to\infty$ such that $\delta(u(T_n))\to 0$.  Applying the estimate above on intervals $[t,T_n]$ thus yields
\[
\int_t^\infty \delta(u(s))\,ds \lesssim \delta(u(t))
\qquad(t\geq0).
\]
By Gronwall's inequality, it follows that
\[
\int_t^\infty \delta(u(s))\,ds \lesssim e^{-c_0t}.
\]
By \eqref{mo2} and the bounds on $\lambda$, $|\alpha'(t)|\lesssim\delta(u(t))$.
Since $\alpha(T_n)\to0$, integration from $t$ to $T_n$ and passage to
the limit give
\[
 \delta(u(t))\lesssim|\alpha(t)|
 \lesssim\int_t^\infty\delta(u(s))\,ds\lesssim e^{-c_0t}.
\]
The remaining estimates follow by integrating \eqref{mo2}, using
$|x(t)|\leq1$ and $\lambda(t)\sim\lambda(0)$. In particular,
$\lambda$ has a positive limit, and a continuous lift of $\theta$
converges as $t\to\infty$.
\end{proof}

Completing the proof of Corollary~\ref{t2.3} requires one final ingredient:
\begin{proposition}\label{2WW}
Let $u:I\times\R^d\to\mathbb{C}$ be a solution to \eqref{1.1} satisfying
\[
E(u)=E(W),\quad\|u_0\|_{\dot{H}^1(\R^d)}>\|W\|_{\dot{H}^1(\R^d)}\qtq{and}\sup I=\infty.
\]
Suppose further that
\begin{equation}\label{decayf}
\left\|u(t)-W\right\|_{\dot{H}_x^1} \leq Ce^{-ct},\quad \forall \ t\geq 0
\end{equation}
for some $c,C>0$. Then there exists $T\in \mathbb{R}$ such that $u(t)=W^{+}(t-T).$
\end{proposition}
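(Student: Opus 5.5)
This is the Duyckaerts--Merle classification of threshold solutions that converge exponentially to $W$, in the form of \cite{DM08} and its extension to all $d\ge3$ in \cite{MMMZ25}. The plan follows their linearized analysis. We write $u=W+h$. Then $h$ solves
\[
\partial_t h+\mathcal L h=R(h),
\]
where $\mathcal L$ is the linearized operator around $W$ (acting on $(\Re h,\Im h)$) and $R(h)$ is at least quadratic in $h$, so that
\[
\|R(h)\|_{\dot N^1}\lesssim \|h\|_{\dot S^1}^2+\|h\|_{\dot S^1}^{\frac{d+2}{d-2}}
\]
on unit time intervals. The spectral facts we take from \cite{DM08}: $\mathcal L$ has exactly two nonzero real eigenvalues $\pm e_0$, with eigenfunctions $\mathcal Y_\pm$ decaying exponentially. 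Its generalized kernel is spanned by the symmetry directions $iW,W_1,\partial_jW$. On the complement, defined by the $\dot H^1$-orthogonality conditions, the linearized energy is coercive. By \cite{DM08}, for each $a\in\R$ there is a unique solution $U^a$ with $\|U^a(t)-W-ae^{-e_0t}\mathcal Y_+\|_{\dot H^1}\lesssim e^{-2e_0t}$, and $U^a=W^{\pm}(t-T_a)$ according to the sign of $a$ (with $U^0=W$).

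First we bootstrap the decay rate. From \eqref{decayf}, Strichartz estimates on unit intervals $[t,t+1]$ give $\|h\|_{\dot S^1([t,\infty))}\lesssim e^{-ct}$. Decompose
\[
h=\beta_+(t)\mathcal Y_++\beta_-(t)\mathcal Y_-+h_0+h_\perp,
\]
where $h_0$ lies in the generalized kernel. We have $\beta_\pm'\mp e_0\beta_\pm=O(e^{-2ct})$. Integrating from $\infty$ (using decay) gives $|\beta_-|\lesssim e^{-2ct}$ when $2c<e_0$. Also $\beta_+(t)=ae^{-e_0t}+O(e^{-2ct})$ once $2c>e_0$, and otherwise $|\beta_+|\lesssim e^{-2ct}$. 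For $h_\perp$, the coercivity of the linearized energy $\Phi(h)$ together with $\frac{d}{dt}\Phi(h_\perp)=O(e^{-3ct})$ (via $E(u)=E(W)$) yields $\|h_\perp\|\lesssim e^{-\frac32 ct}$. The kernel components are handled by projecting the equation, since $\mathcal L$ annihilates them, giving $|h_0|\lesssim e^{-2ct}$ after integrating from infinity. Iterating, we improve $c$ to $c^+=\min(\frac32c,\dots)$ until we reach $\|h(t)\|\lesssim e^{-e_0^-t}$, and then identify $h(t)=ae^{-e_0t}\mathcal Y_++O(e^{-\frac32 e_0t})$ for some $a\in\R$. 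The sign of $a$ is forced: $a>0$ because $\delta(u)>0$ and $(\mathcal Y_+,W)_{\dot H^1}$ has a fixed sign selecting the upper sheet. If $a=0$, iterating further gives decay faster than any exponential. The uniqueness step below then yields $u=W$, which contradicts $\delta>0$.

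Next we prove uniqueness. Choose $T$ so that $W^+(t-T)$ has the same leading coefficient $a$. Set $w=u-W^+(\cdot-T)$, so that $\|w(t)\|_{\dot H^1}\lesssim e^{-\frac32e_0t}$. Now $w$ solves a linear equation around $W^+$ with quadratic error. Running the same scheme gives improved decay of $w$ at every step, since the $\mathcal Y_+$ coefficient of $w$ now has zero leading term. This yields $\|w(t)\|\lesssim e^{-kt}$ for all $k$. Finally, for $k$ large we use a fixed-point/contraction argument in the weighted space $\sup_{t\ge t_0}e^{kt}\|w\|_{\dot S^1([t,t+1])}$. The Duhamel map for $w$, integrated from $+\infty$ with the unstable mode removed, is a contraction. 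Since $0$ is also a fixed point, this forces $w\equiv0$ on $[t_0,\infty)$, and hence everywhere by uniqueness of solutions.

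The main obstacle is the dimension range $d=3,4$, where $W\notin L^2$. There the spectral decomposition and the coercivity must be carried out purely in $\dot H^1$, and the linear estimates must use Strichartz norms rather than $H^1$-energy methods; in low dimensions one needs the nonlinearity estimate $|R(h)|\lesssim |W|^{\frac{6-d}{d-2}}|h|^2+|h|^{\frac{d+2}{d-2}}$ handled in $\dot N^1$. We would follow \cite{MMMZ25}, which does exactly this, and take the existence and properties of $\mathcal Y_\pm$ and $W^\pm$ from there as given.
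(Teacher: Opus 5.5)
Your proposal is correct and follows the same route as the paper. The paper does not reprove this proposition; it invokes exactly the Duyckaerts--Merle linearized analysis you sketch (decay bootstrapping via the spectral decomposition and the coercive linearized energy, extraction of the $e^{-e_0t}\mathcal Y_+$ coefficient, and uniqueness via a weighted fixed-point argument), citing \cite{DM08,LZ09,CFR22} for radial data and \cite{MMMZ25} for the non-radial case, whose lower-branch argument it says carries over to the upper branch.

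Two small slips do not affect the scheme. First, with $\partial_t h+\mathcal L h=R(h)$ the modal equations are $\beta_\pm'\pm e_0\beta_\pm=R_\pm$, so $\beta_-$ is the mode integrated from $+\infty$ and no condition on $c$ is needed. Second, the symmetry-direction components only gain the rate $e^{-\frac32 ct}$ of $h_\perp$, not $e^{-2ct}$, because the projection of $\mathcal L h_\perp$ onto those directions need not vanish.
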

The radial result is proved in \cite{CFR22,DM08,LZ09}; the non-radial
extension is described in \cite{MMMZ25}.{\footnote{{In \cite{MMMZ25}, the non-radial extension is formulated for the
lower-kinetic-energy branch; the same argument applies to the upper-kinetic-energy branch.}}} With this proposition, we are ready to prove Corollary~\ref{t2.3}.

\begin{proof}[Proof of Corollary~\ref{t2.3}]
Proposition~\ref{exp}, \eqref{mo1}, and the smooth dependence of the
ground-state manifold on its parameters imply
\[
 \left\|u(t)-e^{-i\theta_0}\lambda_0^{\frac{d-2}{2}}
 W(\lambda_0 x+x_0)\right\|_{\dot H^1}\lesssim e^{-c_0t}.
\]
Define
\[
 U(t,x):=e^{i\theta_0}\lambda_0^{-\frac{d-2}{2}}
 u\left(\frac{t}{\lambda_0^2},\frac{x-x_0}{\lambda_0}\right).
\]
Then $U$ solves \eqref{1.1}, lies on the {upper threshold sheet}, and satisfies
\[
 \|U(t)-W\|_{\dot H^1}\lesssim e^{-c_0\lambda_0^{-2}t}.
\]
Proposition~\ref{2WW} gives $U(t)=W^+(t-T)$ for some $T\in\R$.
Undoing the fixed symmetries proves the corollary.
\end{proof}

\section{{Radial dynamics in the interior of the upper cone}}
\label{sec:radial-subthreshold}

In this section, we prove Theorem~\ref{thm:radial-subthreshold}. We first estimate the nonlinear tail by the exterior Hardy mass $H_R$, and then exclude global radial solutions with a uniformly negative virial functional.

We begin with an estimate for the nonlinear tail.

\begin{lemma}\label{lem:radial-tail}
For every $f\in\dot H^1_{\mathrm{rad}}(\R^d)$ and $R>0$,
\begin{equation}\label{eq:radial-nonlinear-tail}
 \int_{|x|>R}|f(x)|^{2^*}\,dx
 \lesssim\|\nabla f\|_{L_x^2}^{\frac2{d-2}}
 H_R(f)^{\frac{d-1}{d-2}}.
\end{equation}
\end{lemma}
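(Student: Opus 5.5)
The plan is to combine a radial Strauss-type pointwise bound, in which the ``mass'' factor is replaced by the exterior Hardy mass, with a trivial splitting of the $L^{2^*}$ integrand. First I will reduce by density to $f$ radial, smooth and compactly supported; both sides of \eqref{eq:radial-nonlinear-tail} are continuous under $\dot H^1$ convergence. For the left side this follows from Sobolev embedding. For $H_R$ it follows from Hardy's inequality. I will write $f(x)=f(r)$ and note $\chi(s/R)=1$ for $s\geq R$. This gives
\[
 \int_{|x|>R}\frac{|f|^2}{|x|^2}\,dx\leq H_R(f),
 \qquad
 \int_R^\infty |f(s)|^2 s^{d-3}\,ds\lesssim H_R(f).
\]

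The key step is a pointwise bound for $r\geq R$. Using $f(s)\to0$ as $s\to\infty$, I will write $|f(r)|^2=-\int_r^\infty \partial_s|f(s)|^2\,ds\leq 2\int_r^\infty |f||f'|\,ds$. I will then split the weights as $s^{\frac{d-3}{2}}\cdot s^{\frac{d-1}{2}}\cdot s^{-(d-2)}$ and apply Cauchy--Schwarz:
\[
 |f(r)|^2\lesssim r^{-(d-2)}\Bigl(\int_r^\infty |f|^2 s^{d-3}\,ds\Bigr)^{1/2}\Bigl(\int_r^\infty |f'|^2 s^{d-1}\,ds\Bigr)^{1/2}
 \lesssim r^{-(d-2)}H_R(f)^{1/2}\|\nabla f\|_{L_x^2}.
\]
Since $s\geq r\geq R$ throughout, only the exterior region enters the first factor. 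That is exactly why $H_R$, and not a full $L^2$-type quantity, appears.

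To conclude, I will write $|f|^{2^*}=\frac{|f|^2}{|x|^2}\cdot |x|^2|f|^{\frac4{d-2}}$ on $\{|x|>R\}$. Raising the pointwise bound to the power $\frac2{d-2}$ gives $|x|^2|f(x)|^{\frac4{d-2}}\lesssim \bigl(H_R(f)^{1/2}\|\nabla f\|_{L_x^2}\bigr)^{\frac2{d-2}}$, and the powers of $|x|$ cancel exactly. Integrating then yields
\[
 \int_{|x|>R}|f|^{2^*}\,dx\lesssim H_R(f)\cdot H_R(f)^{\frac1{d-2}}\|\nabla f\|_{L_x^2}^{\frac2{d-2}},
\]
which is \eqref{eq:radial-nonlinear-tail}, since $1+\frac1{d-2}=\frac{d-1}{d-2}$.

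The only delicate point is choosing the weight split in the Cauchy--Schwarz step. It must produce exactly the Hardy weight $s^{d-3}=s^{d-1}\cdot s^{-2}$ together with a power $r^{-(d-2)}$, so that the scaling is consistent. The decay $f(s)\to0$ needed in the first step is automatic in the smooth compactly supported setting used for the density reduction.
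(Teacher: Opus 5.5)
Your proof is correct and is essentially the paper's argument. The paper passes to logarithmic coordinates $v(s)=e^{\frac{d-2}{2}s}f(e^s)$ and proves $\|v\|_{L^\infty(\log R,\infty)}^2\le 2\|v'\|_{L^2}\|v\|_{L^2(\log R,\infty)}$, which is exactly your pointwise bound on $r^{d-2}|f(r)|^2$; it then splits $|v|^{2^*}=|v|^{\frac4{d-2}}|v|^2$ in the same way you split $|f|^{2^*}$. The only cosmetic difference is that you differentiate $|f|^2$ rather than $r^{d-2}|f|^2$, so the derivative factor is bounded by $\|\nabla f\|_{L_x^2}$ directly, without the sharp Hardy inequality the paper uses to control $\|v'\|_{L^2}$.
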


\begin{proof}
By a density argument and the definition of $H_R(f)$, we may   assume that $f\in C_c^\infty(\R^d)$ below. Let
$\omega_{d-1}=|\mathbb S^{d-1}|$. Then polar coordinates give
\[
 \|\nabla f\|_{L_x^2}^2
 =\omega_{d-1}\int_0^\infty|f'(r)|^2r^{d-1}\,dr,
 \qquad
 \left\|\tfrac{f}{|x|}\right\|_{L_x^2}^2
 =\omega_{d-1}\int_0^\infty|f(r)|^2r^{d-3}\,dr.
\]
Set
\[
 r=e^s,\qquad
 v(s)=e^{\frac{d-2}{2}s}f(e^s),\qquad s\in\R.
\]
Recall that $\chi(|x|/R)=1$ for $|x|>R$, we have
\begin{align}
 \int_{\log(R)}^\infty|v(s)|^2\,ds
 &=\int_{R}^\infty r^{d-3}|f(r)|^2\,dr
 =\tfrac1{\omega_{d-1}}
   \int_{|x|>R}\tfrac{|f(x)|^2}{|x|^2}\,dx \notag \\ &\leq\frac{H_R(f)}{\omega_{d-1}}. \label{vbd}
\end{align}
For the exterior potential energy, the same change of variables yields
\begin{align*}
 \int_{|x|>R}|f(x)|^{2^*}\,dx
 =\omega_{d-1}\int_{\log(R)}^\infty
   |v(s)|^{2^*}\,ds.
\end{align*}

We next estimate $v'$:
\[
 v'(s)=\frac{d-2}{2}e^{\frac{d-2}{2}s}f(e^s)
       +e^{\frac d2s}f'(e^s).
\]
Using $|a+b|^2\leq2|a|^2+2|b|^2$ and again changing variables,
we obtain
\begin{align*}
 \int_\R|v'(s)|^2\,ds
 &\leq\frac{(d-2)^2}{2}\int_\R
       e^{(d-2)s}|f(e^s)|^2\,ds
       +2\int_\R e^{ds}|f'(e^s)|^2\,ds\\
 &=\frac{(d-2)^2}{2}\int_0^\infty
       |f(r)|^2r^{d-3}\,dr
       +2\int_0^\infty|f'(r)|^2r^{d-1}\,dr \\
       &\le 4 \int_0^\infty|f'(r)|^2r^{d-1}\,dr=\frac4{\omega_{d-1}}\|\nabla f\|_{L_x^2}^2,
\end{align*}
where the last inequality follows from the sharp Hardy inequality. Similarly, 
\[
 \int_\R|v(s)|^2\,ds
 =\int_0^\infty|f(r)|^2r^{d-3}\,dr
 \leq\frac4{(d-2)^2\omega_{d-1}}\|\nabla f\|_{L_x^2}^2.
\]
In particular, $v\in H^1(\R)$.

Since $f\in C_c^\infty(\R^d)$,
$v(s)\to 0 \;(s\to\infty)$. Note $v\in H^1(\R)$, for every $s\geq\log(R)$ we have
\begin{align*}
 |v(s)|^2
 &=-\int_s^\infty\frac{d}{d\sigma}|v(\sigma)|^2\,d\sigma=-2\Re\int_s^\infty v'(\sigma)\overline{v(\sigma)}\,d\sigma\\
 &\leq2\left(\int_s^\infty|v'(\sigma)|^2\,d\sigma\right)^{1/2}
        \left(\int_s^\infty|v(\sigma)|^2\,d\sigma\right)^{1/2}\\
 &\leq2\|v'\|_{L_s^2(\R)}
        \|v\|_{L_s^2(\log(R),\infty)}.
\end{align*}
Taking the supremum over $s\geq\log(R)$, we obtain
\[
 \|v\|_{L_s^\infty(\log(R),\infty)}^2
 \leq2\|v'\|_{L_s^2(\R)}
        \|v\|_{L_s^2(\log(R),\infty)}.
\]
Together with \eqref{vbd}, this yields
\begin{align*}
 \int_{|x|>R}|f(x)|^{2^*}\,dx
 &=\omega_{d-1}\int_{\log(R)}^\infty
      |v(s)|^{\frac4{d-2}}|v(s)|^2\,ds\\
 &\leq\omega_{d-1}
   \left(\|v\|_{L_s^\infty(\log(R),\infty)}^2\right)^{\frac2{d-2}}
   \|v\|_{L_s^2(\log(R),\infty)}^2\\
 &\leq 2^{\frac2{d-2}}\omega_{d-1}
   \|v'\|_{L_s^2(\R)}^{\frac2{d-2}}
   \|v\|_{L_s^2(\log(R),\infty)}^{\frac{2d-2}{d-2}}\\
 &\lesssim\|\nabla f\|_{L_x^2}^{\frac2{d-2}}
   H_R(f)^{\frac{d-1}{d-2}}.
\end{align*}
This proves \eqref{eq:radial-nonlinear-tail}.
\end{proof}

We next exclude global radial solutions with a uniformly negative virial functional. This argument will also be used in Section~\ref{sec:radial-no-ceiling}.

\begin{lemma}\label{lem:radial-negative-gap}
There is no radial solution $u:[0,\infty)\times\R^d\to\C$ to
\eqref{1.1} for which
\[
 K(u(t)):=\|\nabla u(t)\|_{L_x^2}^2-\|u(t)\|_{L_x^{2^*}}^{2^*}\leq-c_0\qquad(t\geq0)
\]
with a fixed $c_0>0$.
\end{lemma}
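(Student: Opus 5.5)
We argue by contradiction: assume $u:[0,\infty)\times\R^d\to\C$ is radial with $K(u(t))\le -c_0$ for all $t\ge0$. We run the localized virial identity of Lemma~\ref{lem:localized-virial} on a long interval $[0,T]$ at a radius $R$ chosen in terms of $T$. The exterior error $F_R$ is controlled through the one-sided bound \eqref{oneside}, the radial tail estimate of Lemma~\ref{lem:radial-tail}, and the Hardy flux bound \eqref{rHroot}.

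\textbf{Step 1: negativity of the virial derivative.} By \eqref{eq:localized-virial} and \eqref{oneside},
\[
 \frac{d}{dt}M_R(u(t))\le -8c_0 + C\Bigl(H_R(u(t))+\int_{|x|>R}|u|^{2^*}\,dx\Bigr).
\]
Set $A(t):=\|\nabla u(t)\|_{L_x^2}$. By Lemma~\ref{lem:radial-tail}, the exterior potential term is $\lesssim A(t)^{\frac2{d-2}}H_R(u(t))^{\frac{d-1}{d-2}}$. By \eqref{rHroot},
\[
 H_R(u(t))^{1/2}\le H_R(u_0)^{1/2}+CR^{-2}\int_0^t A(s)\,ds.
\]

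\textbf{Step 2: absorbing the growth of $A$.} The main obstacle is that $A(t)$ need not be bounded a priori. To handle it, use the identity $K=2^*E-\frac{2}{d-2}A^2$ (see \eqref{Ku}). Together with $K\le-c_0$, this gives $A(t)^2\sim |K(u(t))|+1$ and $A(t)^2\lesssim |K(u(t))|$, so large kinetic energy strengthens the negative term. Accordingly, I would aim for
\[
 \frac{d}{dt}M_R\le 8K+C\bigl(H_R+A^{\frac2{d-2}}H_R^{\frac{d-1}{d-2}}\bigr)\le 4K\le -4c_0,
\]
valid as long as $H_R(u(t))\le\eta$ for a small $\eta$. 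The $A$-dependent term is absorbed because $A^{\frac2{d-2}}\lesssim 1+|K|$ for $d\ge3$ and $\eta^{\frac{d-1}{d-2}}\ll1$. Integrating then gives $4\int_0^t|K|\,ds\le |M_R(u_0)|+|M_R(u(t))|\lesssim R^2\bigl(o(1)+A(t)^2\bigr)$, using \eqref{eq:virial-M-bound}. Hence $\int_0^t A^2\,ds\lesssim R^2\bigl(1+A(t)^2\bigr)$.

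\textbf{Step 3: closing the bootstrap.} Choose $R$ large so that $H_R(u_0)\le\eta/4$ and $|M_R(u_0)|\le \epsilon R^2$, which is possible by \eqref{eq:virial-initial-smallness}. By Cauchy--Schwarz,
\[
 R^{-2}\int_0^tA\,ds\le R^{-2}t^{1/2}\Bigl(\int_0^tA^2\Bigr)^{1/2}.
\]
For $t\le \epsilon' R^2$ this is small relative to $\eta^{1/2}$, up to the factor $A(t)$, so a continuity argument keeps $H_R\le\eta$ on $[0,\epsilon' R^2]$. The residual factor $A(t)$ must be handled with care, for instance by running the bootstrap on $\int_0^t A^2$ directly as a Gronwall-type inequality. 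On this interval, $M_R(u(t))\le M_R(u_0)-4c_0t$. Since $M_R(u(t))\ge -CR^2A(t)^2\ge -CR^2|K(u(t))|$, we obtain the differential inequality
\[
 -\frac{d}{dt}M_R\ge 4|K|\gtrsim \frac{|M_R|}{R^2}.
\]
Combined with $M_R(t)\le\epsilon R^2-4c_0t<0$, this yields
\[
 \frac{d}{dt}|M_R|\gtrsim \frac{|M_R|}{R^2}+c_0.
\]
Thus $|M_R|$ reaches size $\gg R^2$ by time $t\sim \epsilon' R^2$ once $\epsilon\ll c_0\epsilon'$. Feeding this back into $R^2A(t)^2\gtrsim|M_R(t)|$ and the integrated bound gives $\int_0^t A^2\gtrsim |M_R|$ growing superlinearly. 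This should contradict the Hardy-flux smallness constraint, or alternatively be iterated over dyadic $R$. Making this last step quantitatively consistent is where I expect the real difficulty, and I would first try to close it using the crude bound $A^2\le C(1+|K|)$ together with the $R^2$-timescale argument.
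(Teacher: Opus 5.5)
\textbf{Verdict: genuine gap.} Steps 1 and 2 are sound and match the paper's setup. With $g:=-K(u)\ge c_0$ one has $\|\nabla u\|_{L_x^2}^2\lesssim g$, and \eqref{oneside} together with Lemma~\ref{lem:radial-tail} gives $\frac{d}{dt}M_R(u)\le -4g$ whenever $H_R(u(t))$ is small. The gap is in Step~3, and it has two parts.

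\textbf{First, there is no contradiction mechanism.} You work only with $M_R$, and your lower bound $M_R(u(t))\ge -CR^2\|\nabla u(t)\|_{L_x^2}^2$ degenerates exactly when the kinetic energy grows. So $\frac{d}{dt}|M_R|\gtrsim |M_R|/R^2+c_0$ and $\int_0^tA^2\lesssim R^2(1+A(t)^2)$ only force $A$ to grow. They are fully consistent with $\|\nabla u\|_{L_x^2}\to\infty$, and no Gronwall argument closes in this direction. Hardy smallness is a bootstrap hypothesis, not something that can be contradicted. The growth claim is also quantitatively off: a rate $R^{-2}$ over a time $\epsilon'R^2$ gives only $|M_R|\sim c_0\epsilon'R^2$, not $\gg R^2$. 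The missing idea is to integrate once more and use $V_R(u(t))\ge0$:
\[
4\int_0^t(t-s)g(s)\,ds\le V_R(u_0)+t\,|M_R(u_0)|=o(R^4)+t\,o(R^2).
\]
The right-hand side involves only the initial data, so no bound on $\|\nabla u(t)\|_{L_x^2}$ is needed.

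\textbf{Second, the fixed time scale $t\le\epsilon'R^2$ cannot control the Hardy flux} $R^{-2}\int_0^t\|\nabla u\|_{L_x^2}\,ds$ when $\|\nabla u\|_{L_x^2}$ is unbounded. The residual factor $A(t)$ you flag is not a technicality; it is the whole difficulty. Indeed, if $H_R\le\eta$ held on $[0,\epsilon'R^2]$, the display above would immediately give $2c_0\epsilon'^2R^4\le o(R^4)$.

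The paper instead measures time by $\int g^{1/2}$. Choose $t_n$ with $\int_0^{t_n}g^{1/2}\,dt=2^n$ and set $R_n^2=L2^n$.
\begin{itemize}
\item Since $\|\nabla u\|_{L_x^2}\lesssim g^{1/2}$, \eqref{rHroot} gives $H_{R_n}(u(t))^{1/2}\le H_{R_n}(u_0)^{1/2}+C/L$ on all of $[0,t_n]$, with no residual factor.
\item Since $g\ge c_0$, one has $t_n\le c_0^{-1/2}2^n\lesssim R_n^2$, so the twice-integrated virial gives $\int_0^{t_n}(t_n-t)g\,dt=o(4^n)$.
\item Because $g$ may concentrate near $t_n$, a single scale still does not suffice. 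Cauchy--Schwarz bounds the left side below by $(t_n-t_{n-1})\int_{t_{n-2}}^{t_{n-1}}g\,dt\ge4^{n-2}\frac{t_n-t_{n-1}}{t_{n-1}-t_{n-2}}$.
\item This forces $t_n-t_{n-1}$ to decay geometrically, so $t_n$ stays bounded. That contradicts $t_n\to\infty$, which holds because $g^{1/2}$ is locally bounded on $[0,\infty)$.
\end{itemize}
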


\begin{proof}
Suppose that such a solution exists and write $g(t):=-K(u(t))$.
Then the bound $g(t)\geq c_0$, together with \eqref{Ku} yields
\begin{equation}\label{gud}
    \|\nabla u(t)\|_{L_x^2}^2\leq Cg(t),
\end{equation}
where $C$ depends on $E(u)$ and $c_0$. By
\eqref{oneside} and Lemma~\ref{lem:radial-tail},
\begin{align}
 F_R(u(t))
 &\lesssim H_R(u(t))
 +\|\nabla u(t)\|_{L_x^2}^{\frac2{d-2}}
               H_R(u(t))^{\frac{d-1}{d-2}} \notag \\
 &\leq C\left(H_R(u(t))+H_R(u(t))^{\frac{d-1}{d-2}}\right)g(t). \label{FRbd}
\end{align}

For each integer $n\geq0$, as $g(t)\ge c_0$, there exists $t_n\leq c_0^{-1/2}2^n$ such that
\begin{equation}\label{eq:radial-doubling-times}
 \int_0^{t_n}g(t)^{1/2}\,dt=2^n.
\end{equation}
Indeed, $t_n\to\infty$.
Otherwise the integrals in \eqref{eq:radial-doubling-times} would
be bounded ($u\in C([0,T];\dot H^1)$ for every finite $T$) contradicting $2^n\to\infty$.
Set $R_n^2=L2^n$, where $L>1$ to be chosen later.
For $0\leq t\leq t_n$, Lemma~\ref{lem:Hardy-flux} and \eqref{gud} yield
\begin{align*}
 H_{R_n}(u(t))^{1/2}
 &\leq H_{R_n}(u(0))^{1/2}
      +\frac C{R_n^2}\int_0^t\|\nabla u(s)\|_{L_x^2}\,ds\\
 &\leq H_{R_n}(u(0))^{1/2}
      +\frac C{R_n^2}\int_0^{t_n}g(s)^{1/2}\,ds\\
 &=H_{R_n}(u(0))^{1/2}+\frac C L.
\end{align*}
Given $\eta_0>0$, we can choose a fixed $L$ such that $C/L\leq\eta_0^{1/2}/2$.
The dominated convergence theorem shows that
$H_{R_n}(u(0))\to0$. Consequently, for $n$ sufficiently large,
\[
 H_{R_n}(u(t))\leq\eta_0\qquad(0\leq t\leq t_n).
\]
Choosing $\eta_0$ small enough, we get from \eqref{FRbd} and \eqref{eq:localized-virial} that 
\begin{equation}\label{VRn} 
 \frac{d^2}{dt^2}V_{R_n}(u(t))\leq-4g(t), \textit{ for all $t\in[0,t_n]$}.
\end{equation}
Integrating \eqref{VRn} twice and using $V_{R_n}(u(t_n))\geq0$, we obtain
\begin{align*}
 4\int_0^{t_n}(t_n-t)g(t)\,dt
 &\leq V_{R_n}(u(0))+t_n|M_{R_n}(u(0))|.
\end{align*}
By \eqref{eq:virial-initial-smallness}, the two terms on the right
are $o(R_n^4)$ and $t_no(R_n^2)$, respectively. Since
$R_n^2=L2^n$ and $t_n\leq c_0^{-1/2}2^n$, this gives
\begin{equation}\label{eq:radial-double-integral-small}
 \int_0^{t_n}(t_n-t)g(t)\,dt=o(4^n).
\end{equation}
In addition, for $n\geq2$, Cauchy--Schwarz and
\eqref{eq:radial-doubling-times} imply
\begin{align*}
 \int_0^{t_n}(t_n-t)g(t)\,dt
 &\geq(t_n-t_{n-1})\int_{t_{n-2}}^{t_{n-1}}g(t)\,dt\\
 &\geq\frac{t_n-t_{n-1}}{t_{n-1}-t_{n-2}}
       \left(\int_{t_{n-2}}^{t_{n-1}}g(t)^{1/2}\,dt\right)^2\\
 &=4^{n-2}\frac{t_n-t_{n-1}}{t_{n-1}-t_{n-2}}.
\end{align*}
Together with \eqref{eq:radial-double-integral-small}, this yields
\[
 \frac{t_n-t_{n-1}}{t_{n-1}-t_{n-2}}\to 0\;\; (n\to\infty).
\]
In particular, $t_n-t_{n-1}\leq\frac12(t_{n-1}-t_{n-2})$ for
all sufficiently large $n$. Summing this geometric bound shows
that $t_n$ is uniformly bounded, contradicting $t_n\to\infty$. This contradiction completes the proof.
\end{proof}

\begin{proof}[Proof of Theorem~\ref{thm:radial-subthreshold}] Let $I=(T_-,T_+)$.
Energy conservation and Lemma~\ref{lem:subthreshold-branch}
yield
\[
 K(u(t))\leq-2^*\bigl[E(W)-E(u)\bigr]<0
 \qquad(t\in I).
\]
 Lemma~\ref{lem:radial-negative-gap} shows that $T_+<\infty$. Applying the same argument to $\overline{u(-t)}$ gives $T_->-\infty$. Thus the  maximal lifespan of $u$ is finite.
\end{proof}

\section{{Radial dynamics on the upper threshold sheet}}\label{sec:radial-no-ceiling}

In this section, we prove Theorem~\ref{thm:radial-threshold} and show that $W^+$ blows up in finite negative time. We first establish convergence to the ground-state orbit along a sequence of times, and then show that the solution stays sufficiently close to that orbit.

Throughout the section, $u:I\times\R^d\to\C$ is radial and
satisfies \eqref{hyp:bounded}. By Lemma~\ref{lem:threshold-branch},
$\delta(u(t))>0$ and \eqref{eq:Kdelta} hold. The spatial modulation
parameter is zero for radial symmetry.

We begin by controlling the sign of the virial derivative by the exterior Hardy mass.

\begin{lemma}\label{lem:radcoer0}
There exists $\eta_0>0$ such that, for every $R>0$ and $t\in I$,
\begin{equation}\label{radialcoer}
 H_R(u(t))\leq\eta_0
 \quad\Longrightarrow\quad
 \frac d{dt}M_R(u(t))\leq-\frac8{d-2}\delta(u(t)).
\end{equation}
The constant $\eta_0$ depends only on the dimension and fixed
cutoff functions.
\end{lemma}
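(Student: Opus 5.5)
Starting from the threshold virial identity \eqref{thlocalv},
\[
\frac{d}{dt}M_R(u(t))=-\frac{16}{d-2}\delta(u(t))+F_R(u(t)),
\]
it suffices to prove that $H_R(u(t))\le\eta_0$ implies
\[
F_R(u(t))\le\frac{8}{d-2}\delta(u(t)).
\]
I will obtain this from the one-sided bound \eqref{oneside}, the radial tail estimate Lemma~\ref{lem:radial-tail}, and a lower bound of $\delta$ in terms of the kinetic energy. The bound \eqref{oneside} together with Lemma~\ref{lem:radial-tail} gives
\[
F_R(u(t))\le C\Bigl(H_R(u(t))+\|\nabla u(t)\|_{L_x^2}^{\frac{2}{d-2}}H_R(u(t))^{\frac{d-1}{d-2}}\Bigr).
\]
This is the same structure as in \eqref{FRbd}, but with $g(t)$ replaced by $\delta(u(t))$.

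The problem is that neither term on the right is obviously bounded by $\delta$. $H_R$ is a small absolute quantity, while $\delta$ may itself be tiny, for instance when $u(t)$ is close to $\mathcal W$. So I split into two cases according to a small threshold $\delta_1$ depending only on $d$.

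\emph{Case 1: $\delta(u(t))\ge\delta_1$.} Then
\[
\|\nabla u\|_{L_x^2}^2=\|\nabla W\|_{L_x^2}^2+\delta\le C_{\delta_1}\delta .
\]
Also $H_R(u)\lesssim\|\nabla u\|_{L_x^2}^2$ by Hardy. Hence $H_R\le\eta_0\lesssim\eta_0\delta_1^{-1}\delta$, and
\[
\|\nabla u\|_{L_x^2}^{\frac{2}{d-2}}H_R^{\frac{d-1}{d-2}}\le\eta_0^{\frac{1}{d-2}}\,H_R\,\|\nabla u\|_{L_x^2}^{\frac{2}{d-2}}\lesssim\eta_0^{\frac{1}{d-2}}\|\nabla u\|_{L_x^2}^{2+\frac{2}{d-2}} .
\]
The gradient powers need a little care. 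Since $\|\nabla u\|_{L_x^2}^2\sim\delta$ for $\delta\ge\delta_1$, the exponent $\frac{d-1}{d-2}>1$ is harmless only if $\delta$ is bounded. Otherwise I bound
\[
\|\nabla u\|_{L_x^2}^{\frac{2}{d-2}}H_R^{\frac{d-1}{d-2}}\le\|\nabla u\|_{L_x^2}^{\frac{2}{d-2}}\,\eta_0^{\frac{1}{d-2}}\,H_R .
\]
Then I use $H_R\le\eta_0$ once more, in the form $H_R^{\frac{d-1}{d-2}}\le\eta_0^{\frac{d-1}{d-2}}$, together with
\[
\|\nabla u\|_{L_x^2}^{\frac{2}{d-2}}\le C(1+\delta)\le C\delta_1^{-1}\delta .
\]
Either way $F_R\le C(\delta_1)\,\eta_0^{\min(1,\frac{1}{d-2})}\,\delta$, which is at most $\frac{8}{d-2}\delta$ once $\eta_0$ is chosen small depending on $\delta_1$.

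\emph{Case 2: $\delta(u(t))<\delta_1$, the main obstacle.} Here smallness of $H_R$ alone cannot beat $\delta$, so I use modulation. In the radial setting $x(t)=0$, and Lemma~\ref{modula} gives
\[
u(t)=\widetilde W+h,\qquad \|h\|_{\dot H^1}\lesssim\delta ,\qquad \widetilde W=e^{-i\theta}\lambda^{\frac{d-2}{2}}W(\lambda\,\cdot).
\]
The key point is that $H_R(u)\le\eta_0$ forces $R\lambda$ to be large. Indeed,
\[
H_R(u)^{1/2}\ge H_R(\widetilde W)^{1/2}-C\delta_1 ,
\]
and $H_R(\widetilde W)=H_{R\lambda}(W)$ is a continuous decreasing function of $R\lambda$, tending to $\|W/|x|\|_{L^2}^2>0$ as $R\lambda\to0$. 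Choosing $\delta_1$ and then $\eta_0$ small therefore gives $H_{R\lambda}(W)\le4\eta_0$, hence $R\lambda\ge C(\eta)$. Then the exterior norms of $\widetilde W$ appearing in \eqref{VED} are at most $\eta$. As in the proof of Theorem~\ref{Ret2.3}, \eqref{VED} yields
\[
|F_R(u)|\lesssim(\eta+\delta_1)\,\delta\le\frac{8}{d-2}\,\delta
\]
for $\eta$ and $\delta_1$ small.

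The order of choices is: fix $\eta$ small, then $\delta_1$ small, then $\eta_0$ small. This order makes $\eta_0$ depend only on $d$ and the fixed cutoffs, as required.
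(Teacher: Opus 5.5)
Your proof is correct and has the same structure as the paper's. You make the same reduction to $F_R(u(t))\le\frac{8}{d-2}\delta(u(t))$ and split at a small $\delta_1$. In the case $\delta\ge\delta_1$ you use exactly the paper's argument via \eqref{oneside} and Lemma~\ref{lem:radial-tail}. The first bound you write there, $\eta_0^{\frac1{d-2}}\|\nabla u\|_{L_x^2}^{2+\frac2{d-2}}$, is superlinear in $\delta$ and should simply be dropped; your second bound is the right one.

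The only real difference is in the case $\delta<\delta_1$.
\begin{itemize}
\item The paper uses the explicit pointwise bounds $|\nabla\widetilde W|\lesssim|\widetilde W|/|x|$ and $|\widetilde W|^{\frac4{d-2}}\lesssim|x|^{-2}$, uniform in $\lambda$. These dominate every exterior norm of $\widetilde W$ in \eqref{VED} by $H_R(\widetilde W)\le 2H_R(u)+C\delta^2$, which yields the quantitative bound $|F_R(u)|\lesssim\bigl(H_R(u)^{1/2}+\delta\bigr)\delta$.
\item You argue softly, as in Theorem~\ref{Ret2.3}: smallness of $H_R(\widetilde W)=H_{R\lambda}(W)$ forces $R\lambda$ to be large, and the tails of $W$ beyond a large radius are small.
\end{itemize}
Both routes work. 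The paper's version needs no auxiliary parameter $\eta$ and gives an explicit linear dependence on $H_R(u)^{1/2}$. Yours uses only the positivity of $W$ and the finiteness of its exterior norms, not its precise decay.

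Two small slips in your Case 2 need fixing.
\begin{itemize}
\item Since $\eta_0$ is chosen after $\delta_1$, you only get $H_{R\lambda}(W)\le(\eta_0^{1/2}+C\delta_1)^2$, not $\le4\eta_0$. This is still small in terms of $\eta$ under your order of choices, which is all you need.
\item What forces $R\lambda$ to be large is the lower bound $H_\rho(W)\ge\int_{|y|\ge\rho}|W|^2|y|^{-2}\,dy$, a positive function of $\rho$ that tends to zero only as $\rho\to\infty$. Monotonicity of $\chi$ is not assumed, and the behavior as $\rho\to0$ plays no role.
\end{itemize}
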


\begin{proof}
By \eqref{thlocalv}, it suffices to prove for every $R>0$ and $t\in I$,
\[
  H_R(u(t))\leq\eta_0
 \quad\Longrightarrow\quad F_R(u(t))\leq\frac8{d-2}\delta(u(t)).
\]
 Fix $0<\delta_1<\delta_0$
sufficiently small. Suppose first that $0<\delta(u(t))<\delta_1$.
Let
\[
 \widetilde W(x):=e^{-i\theta(t)}\lambda(t)^{\frac{d-2}{2}}
                      W(\lambda(t)x),
\]
where $\theta$ and $\lambda$ are given by Lemma~\ref{modula}.
Then $\|u(t)-\widetilde W\|_{\dot H^1}\lesssim\delta(u(t))$.
The explicit formula for $W$ gives, uniformly in $\lambda(t)$,
\[
 |\nabla\widetilde W(x)|\lesssim\frac{|\widetilde W(x)|}{|x|},
 \qquad |\widetilde W(x)|^{\frac4{d-2}}\lesssim|x|^{-2}.
\]
This together with $\chi(|x|/R)=1$ for $|x|\geq R$ yield
\begin{align*}
 \int_{|x|>R}\left(|\nabla\widetilde W|^2
     +|\widetilde W|^{2^*}
     +\frac{|\widetilde W|^2}{|x|^2}\right)\,dx
 &\lesssim H_R(\widetilde W).
\end{align*}
From Hardy's inequality and the
modulation estimate (Lemma \ref{modula}), we have 
 $$ H_R(\widetilde W)\leq2H_R(u(t))+C\delta(u(t))^2.$$
Applying \eqref{VED}
with $h=u(t)-\widetilde W$, we obtain
\begin{align*}
 |F_R(u(t))|
 &\lesssim H_R(\widetilde W)^{1/2}\delta(u(t))+\delta(u(t))^2 +H_R(\widetilde W)^{\frac{d+2}{2d}}\delta(u(t))
          +\delta(u(t))^{2^*}\\
&\lesssim\bigl(H_R(u(t))^{1/2}+\delta(u(t))\bigr)\delta(u(t)),
\end{align*}
provided that $H_R(u(t))$ and $\delta_1$ are sufficiently small.

If $\delta(u(t))\geq\delta_1$,  
\eqref{oneside} and Lemma~\ref{lem:radial-tail} imply
\begin{align*}
 F_R(u(t))
 &\lesssim H_R(u(t))
  +\|\nabla u(t)\|_{L_x^2}^{\frac2{d-2}}
          H_R(u(t))^{\frac{d-1}{d-2}}\\
 &\lesssim H_R(u(t))
  +\bigl(\|\nabla W\|_{L_x^2}^2+\delta(u(t))\bigr)^{\frac1{d-2}}
          H_R(u(t))^{\frac{d-1}{d-2}}\\
          &\leq C_{\delta_1}
 \left(H_R(u(t))+H_R(u(t))^{\frac{d-1}{d-2}}\right)\delta(u(t)).
\end{align*}

The conclusion follows by first taking $\delta_1$ sufficiently small and then $\eta_0$.
\end{proof}

We next prove that $\delta(u(t))$ converges to zero along a sequence of times.

\begin{proposition}\label{prads}
Let $u:[0,\infty)\times\R^d\to\C$ be a radial solution to \eqref{1.1} satisfying
\eqref{hyp:bounded}. Then
\begin{equation} 
 \liminf_{t\to\infty}\delta(u(t))=0.
\end{equation}
\end{proposition}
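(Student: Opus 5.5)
We argue by contradiction, adapting the doubling-time scheme of Lemma~\ref{lem:radial-negative-gap} to the threshold setting. Suppose $\liminf_{t\to\infty}\delta(u(t))>0$. Then there exist $c_0>0$ and $T_0\ge0$ such that $\delta(u(t))\ge c_0$ for all $t\ge T_0$; after a time translation we take $T_0=0$. By \eqref{eq:Kdelta}, the functional $g(t):=-K(u(t))=\frac2{d-2}\delta(u(t))$ is then bounded below by $\frac{2c_0}{d-2}>0$ on $[0,\infty)$. This is exactly the hypothesis of Lemma~\ref{lem:radial-negative-gap}, which rules out any radial solution on $[0,\infty)$ with a uniformly negative virial functional. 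That contradiction gives $\liminf_{t\to\infty}\delta(u(t))=0$.

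For completeness, here is how the key ingredients of Lemma~\ref{lem:radial-negative-gap} specialize at the threshold, since the paper will reuse them. First, $\|\nabla u(t)\|_{L_x^2}^2=\|\nabla W\|_{L_x^2}^2+\delta(u(t))\lesssim_{c_0}\delta(u(t))$, which is the analogue of \eqref{gud}. Next, choose times $t_n$ with $\int_0^{t_n}\delta(u)^{1/2}\,dt=2^n$ and radii $R_n^2=L2^n$. The flux bound \eqref{rHroot} then keeps $H_{R_n}(u(t))\le\eta_0$ on $[0,t_n]$ for large $n$, and Lemma~\ref{lem:radcoer0} gives $\frac{d}{dt}M_{R_n}(u(t))\le-\frac{8}{d-2}\delta(u(t))$ there. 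Integrating twice and using $V_{R_n}\ge0$ together with \eqref{eq:virial-initial-smallness} yields $\int_0^{t_n}(t_n-t)\delta(u(t))\,dt=o(4^n)$. The Cauchy--Schwarz lower bound $\gtrsim4^{n}\frac{t_n-t_{n-1}}{t_{n-1}-t_{n-2}}$ then forces geometric decay of $t_n-t_{n-1}$, so the $t_n$ stay bounded. But $t_n\to\infty$, because $u\in C([0,T];\dot H^1)$ for every finite $T$.

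The only point needing care is that the contradiction hypothesis supplies the uniform gap only eventually, not from $t=0$. This is why we restart the solution at $T_0$ before invoking the lemma. Apart from that, the proof reduces directly to results already established.
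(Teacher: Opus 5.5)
Your proof is correct and matches the paper's: after a time translation, a uniform lower bound $\delta(u(t))\ge c_0$ becomes $K(u(t))=-\frac{2}{d-2}\delta(u(t))\le-\frac{2c_0}{d-2}$ via \eqref{eq:Kdelta}, which Lemma~\ref{lem:radial-negative-gap} rules out. Your extra sketch of how that lemma's doubling-time argument specializes is not needed, but it is consistent with the paper.
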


\begin{proof}
We argue by contradiction. Assume after a time translation  that  $\delta(u(t))\geq c_0>0$
for all $t\geq0$. By \eqref{eq:Kdelta}, we have
\[
K(u(t))=-\frac2{d-2}\delta(u(t))\leq-\frac{2c_0}{d-2}\qquad(t\geq0),
\]
contradicting Lemma~\ref{lem:radial-negative-gap}. This proposition is proved.
\end{proof}

{We next show that a radial solution on the upper threshold sheet which is close to the ground-state manifold at both endpoints of an interval must remain close to that manifold throughout that interval.}

\begin{lemma}\label{lem:radial-no-return}
Let $u:[0,\infty)\times\R^d\to\C$ be a radial solution satisfying
\eqref{hyp:bounded}, and $0<\eta<\delta_0$. There exists
$\varepsilon=\varepsilon(\eta)>0$ such that the following holds.
Suppose that there exist 
$t_1<t_2$ such that
\[
 \delta(u(t_1))+\delta(u(t_2))\leq\varepsilon.
\]
Then
\begin{equation}\label{unib0}
 \sup_{t\in[t_1,t_2]}\delta(u(t))<\eta.
\end{equation}
\end{lemma}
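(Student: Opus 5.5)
The plan is to combine the modulated localized virial identity with the exterior Hardy mass flux estimate, closing the argument by a continuity (bootstrap) argument in the spirit of Lemma~\ref{lem:radial-negative-gap}. We argue by contradiction. Fix $\eta$ and let $\varepsilon$ be small, to be chosen. Suppose $\delta(u(t))\ge\eta$ somewhere on $[t_1,t_2]$. Let $\tau_1$ be the first such time after $t_1$ and $\tau_2$ the last such time before $t_2$.

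\emph{Choice of the radius.} On $[t_1,\tau_1]$ and on $[\tau_2,t_2]$ we have $\delta\le\eta<\delta_0$, so the modulation parameters of Lemma~\ref{modula} exist, with $x\equiv0$ by radial symmetry. Theorem~\ref{Ret2.3} applies on these subintervals, provided $\eta\le\eta_*$ (reduce $\eta$ if necessary; the statement is monotone in $\eta$). It gives $\lambda(t)\sim\lambda(t_1)$ on the first subinterval and $\lambda(t)\sim\lambda(t_2)$ on the second. Choose
\[
R=A\max\{\lambda(t_1)^{-1},\lambda(t_2)^{-1}\}
\]
with $A$ large. Then $u(t_j)$ is $O(\varepsilon)$-close in $\dot H^1$ to a rescaled $W$ whose exterior Hardy mass beyond $R$ is $o_A(1)$. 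By Hardy's inequality, $H_R(u(t_j))\le\eta_0/4$ for $j=1,2$, where $\eta_0$ is the constant of Lemma~\ref{lem:radcoer0}. By \eqref{rac0}, $|M_R(u(t_j))|\lesssim R^2\varepsilon$.

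\emph{Bootstrap.} Let $S$ be the set of $s\in[t_1,t_2]$ such that $H_R(u(t))\le\eta_0$ on $[t_1,s]$. On $[t_1,s]$, Lemma~\ref{lem:radcoer0} gives $\frac{d}{dt}M_R\le-\frac8{d-2}\delta$. Hence $M_R$ is decreasing there, and
\[
\int_{t_1}^{s}\delta\,dt\lesssim M_R(u(t_1))-M_R(u(s)).
\]
Then \eqref{refHardy} yields
\[
H_R(u(s))\le H_R(u(t_1))+CR^{-2}\int_{t_1}^s\delta\,dt .
\]
The whole point is a lower bound $M_R(u(s))\ge -CR^2\varepsilon$. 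Once this holds, $H_R(u(s))\le\eta_0/4+C\varepsilon<\eta_0/2$, so $S$ is open and closed and equals $[t_1,t_2]$. This in turn gives
\[
\int_{t_1}^{t_2}\delta\,dt\lesssim R^2\varepsilon .
\]

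\emph{Main obstacle.} The hard step is the lower bound on $M_R(u(s))$ at intermediate times, where $\delta(s)$ may be large and \eqref{rac0} alone is useless. I would try to obtain it by running the same bootstrap backward from $t_2$. On any interval $[s,t_2]$ on which the Hardy mass stays small, $M_R$ is also decreasing, so $M_R(u(s))\ge M_R(u(t_2))\ge -CR^2\varepsilon$. The two continuity arguments would be glued by taking the maximal forward interval from $t_1$ and the maximal backward interval from $t_2$. On the first, $\int\delta\lesssim R^2\varepsilon+R^2\delta(s)$, and symmetrically on the second. One then checks that these intervals must overlap, using the fact that $H_R$ cannot jump from $\le\eta_0/2$ to $\eta_0$ without $\int\delta\gtrsim R^2\eta_0$ having accumulated. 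If this gluing fails, the fallback is to use Lemma~\ref{lem:radial-tail} and \eqref{oneside} directly, bounding $F_R$ by $C(H_R+H_R^{\frac{d-1}{d-2}})\delta$ as in the $\delta\ge\delta_1$ case of Lemma~\ref{lem:radcoer0}; this keeps monotonicity of $M_R$ throughout.

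\emph{Conclusion.} With $\int_{t_1}^{t_2}\delta\lesssim R^2\varepsilon$ and $R\sim\lambda^{-1}$ on the modulated pieces, \eqref{mo2} controls the variation of $\alpha$ on $[t_1,\tau_1]$:
\[
|\alpha(\tau_1)|\le|\alpha(t_1)|+C\int\lambda^2\delta\,dt\lesssim\varepsilon+A^2\varepsilon .
\]
By \eqref{mo1} this gives $\delta(u(\tau_1))\lesssim(1+A^2)\varepsilon<\eta$ for $\varepsilon=\varepsilon(\eta)$ small. This contradicts $\delta(u(\tau_1))=\eta$ and proves \eqref{unib0}.
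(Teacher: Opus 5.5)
\textbf{There is a genuine gap, and it is exactly the step you flagged as the main obstacle.}

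With $R=A\max\{\lambda(t_1)^{-1},\lambda(t_2)^{-1}\}$ fixed by endpoint data, \eqref{refHardy} only gives $H_R(u(s))\le H_R(u(t_1))+CR^{-2}\int_{t_1}^{s}\delta\,dt$. Nothing bounds $\int_{t_1}^{t_2}\delta\,dt$ by a small multiple of $R^2$ unless $M_R$ is already known to be monotone on the whole interval. That monotonicity in turn requires $H_R\le\eta_0$ on the whole interval. Neither of your remedies breaks this circle.

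\emph{The gluing argument.} Suppose the forward exit time $s_1$ (from $t_1$) precedes the backward exit time $s_2$ (from $t_2$). What the two bootstraps actually give is $M_R(u(s_1))\le CR^2\varepsilon-cR^2\eta_0<0$ and $M_R(u(s_2))\ge cR^2\eta_0-CR^2\varepsilon>0$. This is not contradictory. Between $s_1$ and $s_2$ nothing keeps $H_R$ below $\eta_0$, so $F_R$ is uncontrolled and $M_R$ is free to increase. Moreover, \eqref{rac0} at $s_1$ only gives $|M_R(u(s_1))|\lesssim R^2\delta(u(s_1))$, and $\delta(u(s_1))$ may be large.

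\emph{The fallback.} Using Lemma~\ref{lem:radial-tail} and \eqref{oneside}, you bound $F_R$ by $C(H_R+H_R^{\frac{d-1}{d-2}})\delta$. This is only useful once $H_R$ is already small. It is the large-$\delta$ half of Lemma~\ref{lem:radcoer0} and does not replace control of the Hardy mass.

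\textbf{The missing idea} is to let $R$ depend on the whole interval rather than on the endpoint scales: take $R^2=L\int_{t_1}^{t_2}\delta(u(t))\,dt$. Because the flux bound \eqref{refHardy} is linear in $\delta$ for threshold solutions, this gives $H_R(u(t))\le H_R(u(t_1))+C/L$ for every $t\in[t_1,t_2]$ directly, with no bootstrap. Hence \eqref{radialcoer} holds on all of $[t_1,t_2]$. Integrating it and using \eqref{rac0} at the two endpoints gives $\int_{t_1}^{t_2}\delta\,dt\lesssim R^2\varepsilon=L\varepsilon\int_{t_1}^{t_2}\delta\,dt$, a contradiction once $L\varepsilon\ll1$.

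The only additional input is the lower bound $\int_{t_1}^{t_2}\delta\,dt\gtrsim\eta$. This makes $R\gtrsim(L\eta)^{1/2}$ large, so that $H_R(u(t_1))\le\frac12\eta_0$ after normalizing $\lambda(t_1)=1$. It follows from essentially your concluding computation: if $\int_{t_1}^{\tau_1}\delta\,dt$ were small, \eqref{Mod2} would give $\lambda^2\le2$ on $[t_1,\tau_1]$, and then $\eta\lesssim|\alpha(\tau_1)|\lesssim\varepsilon+\int_{t_1}^{\tau_1}\delta\,dt$. This is how the paper argues. It needs neither Theorem~\ref{Ret2.3} nor any comparison between $\lambda(t_1)$ and $\lambda(t_2)$.

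Your final step as written also needs that comparison. With $R$ set by the smaller endpoint scale, $\int_{t_1}^{\tau_1}\lambda^2\delta\,dt$ is only bounded by $A^2\varepsilon\,\lambda(t_1)^2/\min\{\lambda(t_1),\lambda(t_2)\}^2$. So you would have to run it at the endpoint with the smaller $\lambda$.
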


\begin{proof}
Take $0<\varepsilon<\eta/2$ sufficiently small. Suppose that
\eqref{unib0} fails. By the symmetries of
the equation, we may assume that $[t_1,t_2]=[0,T]$ and 
$\lambda(0)=1$, $\theta(0)=0$. Let $s\in(0,T)$ be the first time
such that $\delta(u(s))=\eta$, then
\begin{equation}\label{lowerbd1}
 \int_0^T\delta(u(t))\,dt\gtrsim\eta.
\end{equation}
Since $\eta<\delta_0$, it suffices to consider the case where
$\int_0^s\delta(u(t))\,dt$ is smaller than a sufficiently small constant. By \eqref{Mod2}, on $[0,s]$, we have
\[
 \left|\frac{d}{dt}\lambda(t)^{-2}\right|\lesssim\delta(u(t)).
\]
Note that $\lambda(0)=1$, integration gives $\lambda(t)^2\leq2$ on
$[0,s]$. Applying \eqref{equivq1} and \eqref{Mod2}, we obtain
\begin{align*}
 \eta&\lesssim|\alpha(s)|
 \leq|\alpha(0)|+\int_0^s|\alpha'(t)|\,dt
 \lesssim\varepsilon+\int_0^s\delta(u(t))\,dt.
\end{align*}
Taking $\varepsilon$ sufficiently small relative to $\eta$ proves
\eqref{lowerbd1}.

Set
\begin{equation}\label{R2}
 R^2=L\int_0^T\delta(u(t))\,dt,
\end{equation}
where $L=L(\eta)>1$ is chosen later.
By \eqref{lowerbd1}, $R^2\gtrsim L\eta$.
Since $H_R(W)\to0$ as $R\to\infty$, choosing $L$ sufficiently
large and then $\varepsilon$ sufficiently small gives
\[
 H_R(u(0))\leq2H_R(W)+C\|u(0)-W\|_{\dot H^1}^2
 \leq2H_R(W)+C\varepsilon^2\leq\frac12\eta_0,
\]
where $\eta_0$ is as in Lemma~\ref{lem:radcoer0}.
Moreover, for $0\leq t\leq T$, \eqref{refHardy} gives
\begin{align*}
 H_R(u(t))
 &\leq H_R(u(0))+\frac{C}{R^2}\int_0^t\delta(u(\tau))\,d\tau \leq\frac12\eta_0+\frac{C}{L}\leq\eta_0,
\end{align*}
provided $L$ is sufficiently large depending on $\eta$.
Thus, \eqref{radialcoer} holds on $[0,T]$.
From integrating \eqref{radialcoer} and  \eqref{rac0} in Lemma~\ref{lem:virial-estimates}, \eqref{R2} then yields
\begin{align*}
 \int_0^T\delta(u(t))\,dt
 &\lesssim M_R(u(0))-M_R(u(T)) \lesssim R^2 (\delta(u(0))+\delta(u(T)) )\\ 
 &\lesssim L\varepsilon\int_0^T\delta(u(t))\,dt.
\end{align*}
We get a contradiction by taking $\varepsilon$ sufficiently small, and \eqref{unib0} holds.
\end{proof}

\begin{proof}[Proof of Theorem~\ref{thm:radial-threshold}]
Let $I=(T_-,T_+)$. If $I$ is bounded, there is nothing to prove. Otherwise, after
applying time reversal if necessary, we assume that $T_+=\infty$.
Fix $0<\eta<\min\{\eta_*,\delta_0\}$, where $\eta_*$ is as in
Corollary~\ref{t2.3}, and let $\varepsilon$ be given by
Lemma~\ref{lem:radial-no-return}. By
Proposition~\ref{prads}, there exist $n_0\in \mathbb{N}^+$ and
an increasing sequence $t_n\to\infty$ such that
$\delta(u(t_n))\leq\varepsilon/2$ for all $n\ge n_0$. Applying
Lemma~\ref{lem:radial-no-return} on $[t_{n_0},t_n]$ and letting
$n\to\infty$, we obtain
\[
 \sup_{t\geq t_{n_0}}\delta(u(t))\leq\eta<\eta_*.
\]
 After a time translation, Corollary~\ref{t2.3} implies that $u=W^+$ modulo
symmetries. This completes the proof.
\end{proof}

As a consequence, the special solution $W^+$ has a finite backward lifespan in dimensions $d\ge 3$. This removes the additional finite-mass condition in \cite{DM08}.

\begin{corollary}\label{cor:Wplus-finite-backward}
Let $d\geq3$, and let $I$ be the maximal lifespan of $W^+$. Then
$\inf I> -\infty$.
\end{corollary}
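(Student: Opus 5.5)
The plan is to apply Theorem~\ref{thm:radial-threshold} to the time-reversed solution of $W^+$. Recall that $W^+$ is radial and lies on the upper threshold sheet: $E(W^+)=E(W)$ and $\|\nabla W^+(t)\|_{L_x^2}>\|\nabla W\|_{L_x^2}$. It is defined at least on some interval $[t_0,\infty)$ and converges exponentially to $W$ as $t\to+\infty$. Suppose, for contradiction, that $\inf I=-\infty$. Consider
\[
U(t,x):=\overline{W^+(-t,x)},
\]
which is again a radial maximal-lifespan solution of \eqref{1.1} satisfying \eqref{hyp:bounded}. Its lifespan $-I$ satisfies $\sup(-I)=\infty$, and also $\inf(-I)=-\sup I=-\infty$. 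Hence $|{-I}|=\infty$.

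Theorem~\ref{thm:radial-threshold} then forces $U=W^+$ modulo symmetries. In particular, $U$ must admit one of the two representations in the definition of ``modulo symmetries''. The task is to rule out both.

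In the first (direct) representation, $U(t)=e^{i\theta}\lambda^{-\frac{d-2}{2}}W^+\bigl(\frac{t-T}{\lambda^2},\frac{x-y}{\lambda}\bigr)$. Then $U(t)$ converges exponentially in $\dot H^1$ to a rescaled $W$ as $t\to+\infty$. On the other hand, $U(t)=\overline{W^+(-t)}$, so $W^+(s)$ also converges to the ground-state orbit as $s\to-\infty$. Therefore $\delta(W^+(s))\to0$ at both ends of $\R$.

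To exclude this, I would use the localized virial argument behind Lemma~\ref{lem:radial-no-return}. The sequential smallness at both ends gives, via that lemma, $\sup_{\R}\delta(W^+)\le\eta$. Theorem~\ref{Ret2.3} on $\tilde I=\R$ then keeps $\lambda$ comparable to a constant. Integrating \eqref{thlocalv} with $R=C\lambda_{\min}^{-1}$ over $[-S,S]$ gives
\[
\int_{-S}^{S}\delta\lesssim \lambda_{\min}^{-2}\bigl(\delta(-S)+\delta(S)\bigr)\to0 .
\]
This forces $\delta\equiv0$, contradicting Lemma~\ref{lem:threshold-branch}.

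The second (conjugate, time-reversed) representation undoes the reversal: it says $W^+$ equals $W^+$ modulo the direct symmetries. That is consistent, so this branch needs care. Here I would argue directly instead. The backward-global solution $\overline{W^+(-t)}$ is forward-global, so the proof of Theorem~\ref{thm:radial-threshold} (Proposition~\ref{prads} plus Lemma~\ref{lem:radial-no-return}) shows that $\delta(W^+(s))\to0$ as $s\to-\infty$, after passing to the trapping regime and applying Proposition~\ref{exp} to the reversed solution. Combined with the exponential convergence of $W^+$ as $s\to+\infty$, we are back in the two-sided situation above, and the same two-sided virial contradiction applies.

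The main obstacle is exactly this symmetric case: the conclusion of Theorem~\ref{thm:radial-threshold} alone does not contradict $W^+$ being global. One has to extract from its proof that the solution is trapped near $\mathcal W$ in \emph{both} time directions, and then run the virial bound \eqref{step2} on all of $\R$. That requires checking that Theorem~\ref{Ret2.3} extends to two-sided intervals, which holds since the bootstrap is time-symmetric.
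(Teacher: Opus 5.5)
Your argument is correct and, with the detour removed, it is the paper's proof. Proposition~\ref{prads} applied to $\overline{W^+(-t)}$ makes $\delta(W^+)$ small as $s\to-\infty$ (a sequence $s_n\to-\infty$ suffices). Exponential convergence makes it small along $t\to+\infty$. Lemma~\ref{lem:radial-no-return} on $[s_n,n]$ then gives $\sup_{\R}\delta(W^+)\le\eta$.

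Two parts of your proposal are superfluous. The appeal to Theorem~\ref{thm:radial-threshold} and the case split add nothing, since the conjugate time-reversed representation holds trivially for $U=\overline{W^+(-t)}$. The closing step through Theorem~\ref{Ret2.3} and the virial integration is also unnecessary, because $\sup_{\R}\delta(W^+)\le\eta$ for every $\eta>0$ already forces $\delta\equiv0$.
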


\begin{proof}
Suppose otherwise that $\inf I=-\infty$. Since $W^+$ is global forward in
time, we have $I=\R$. Applying
Proposition~\ref{prads} to
$\overline{W^+(-t)}$, we find $s_n\to-\infty$ such that
$\delta(W^+(s_n))\to0$. {Moreover, by Proposition \ref{2WW}, $\delta(W^+(n))\to0$.} 
Fix $0<\eta<\delta_0$, and let $\varepsilon=\varepsilon(\eta)>0$
be given by Lemma~\ref{lem:radial-no-return}. The preceding limits
give $\delta(W^+(s_n))+\delta(W^+(n))\leq\varepsilon$ for all
sufficiently large $n$. Applying Lemma~\ref{lem:radial-no-return} on $[s_n,n]$, we obtain
\[
 \sup_{t\in[s_n,n]}\delta(W^+(t))<\eta.
\]
Letting $n\to\infty$ and then $\eta\downarrow0$, we find
$\delta(W^+(t))=0$ for every $t\in\R$, contradicting the
{upper-sheet} condition \eqref{hyp:bounded}. This proves the corollary.
\end{proof}

\section{{Nonradial dynamics in the upper cone}}\label{sec:general}

{In this section, we prove Theorems~\ref{thm:general-subthreshold}
and \ref{thm:general-threshold} for non-radial data. The
finite-variance classification applies in every dimension
$d\geq3$, while the results under an explicit kinetic-energy ceiling, without spatial localization, are restricted
to $d\geq5$. We first treat finite-variance solutions on the upper threshold sheet. We then develop the profile and compactness arguments for the kinetic energy-bounded regime and prove Theorem~\ref{thm:general-subthreshold}(ii) and Theorem~\ref{thm:general-threshold}(ii) in Sections \ref{subsec:Thmsubthreshold} and \ref{subsec:Thmthreshold}. }

\subsection{Finite-variance solutions}\label{reduction}

We treat the threshold energy case: $E(u_0)=E(W)$. We begin with a weighted analogue of the inequality of Banica \cite[Lemma 2.1]{Bonica}. \footnote{See \cite{DR10} for the energy sub-critical case.}
\begin{lemma}
\label{lem:banica}
Let $u$ be a {solution on the upper threshold sheet} 
and let
$\phi\in C^1(\R^d;\mathbb R)$. Then
\begin{equation}
\left| \Im\int_{\mathbb{R}^{d}}\nabla \phi \cdot \nabla u(t)
\overline{u(t)}dx\right|^{2}
\lesssim \delta(u(t))^{2}\int_{\mathbb{R}^{d}}
|\nabla\phi |^{2}|u(t)|^{2}\, dx,\label{B}
\end{equation}%
where the implicit constant does not depend on $u(t)$ and $\phi$.
\end{lemma}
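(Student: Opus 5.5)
The plan is to follow Banica's classical phase-modulation argument, using the sharp Sobolev inequality on the upper threshold sheet in place of Gagliardo--Nirenberg. Fix $t$, write $u=u(t)$, and for $s\in\R$ set $u_s:=e^{is\phi}u$. Then
\[
\|\nabla u_s\|_{L_x^2}^2=\|\nabla u\|_{L_x^2}^2+2s\,\Im\!\int\nabla\phi\cdot\nabla u\,\bar u\,dx\cdot(-1)^{\epsilon}+s^2\!\int|\nabla\phi|^2|u|^2\,dx,
\]
where the sign convention is fixed by $\nabla u_s=e^{is\phi}(\nabla u+is u\nabla\phi)$, so the cross term is $2s\Re\int \nabla u\cdot\overline{iu\nabla\phi}=2s\Im\int\nabla\phi\cdot\nabla u\,\bar u$ up to sign. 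The potential term is unchanged: $\|u_s\|_{2^*}=\|u\|_{2^*}$. We may assume $A:=\int|\nabla\phi|^2|u|^2<\infty$ and $A>0$, since otherwise the cross term vanishes by Cauchy--Schwarz or there is nothing to prove. Write $B:=\Im\int\nabla\phi\cdot\nabla u\,\bar u$, so that $\|\nabla u_s\|_2^2=\|\nabla u\|_2^2+2sB+s^2A$.

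The key input is a lower bound on the kinetic energy of any $\dot H^1$ function whose $L^{2^*}$ norm equals $P:=\|u\|_{2^*}^{2^*}>\|W\|_{2^*}^{2^*}$, which holds by \eqref{potg}. By Lemma~\ref{GNsharpz},
\[
\|\nabla u_s\|_2^2\ge C_d^{-2}P^{2/2^*}=\|\nabla W\|_2^2\,\bigl(P/\|W\|_{2^*}^{2^*}\bigr)^{\frac{d-2}{d}}.
\]
On the threshold sheet, energy conservation gives $P=\tfrac{2^*}{2}\bigl(\|\nabla u\|_2^2-\tfrac{2}{d}\|\nabla W\|_2^2\bigr)$, i.e. $P=\|\nabla W\|_2^2+\tfrac{d}{d-2}\delta$, where $\delta=\delta(u(t))$. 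Hence, with $a:=\|\nabla W\|_2^2$, we need the elementary inequality
\[
a\Bigl(1+\tfrac{d}{d-2}\tfrac{\delta}{a}\Bigr)^{\frac{d-2}{d}}\ \ge\ a+\delta-C\delta^2\quad\text{or at least}\quad \ge a+\delta-C\delta\cdot\min(\delta,\cdot).
\]
Writing $\|\nabla u\|_2^2=a+\delta$, the minimum over $s$ of $a+\delta+2sB+s^2A$ is $a+\delta-B^2/A$. So
\[
\frac{B^2}{A}\le a+\delta-a\Bigl(1+\tfrac{d}{d-2}\tfrac{\delta}{a}\Bigr)^{\frac{d-2}{d}}=:G(\delta).
\]

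The remaining task, which is the only real point, is to check that $G(\delta)\lesssim\delta^2$ uniformly in $\delta>0$. This is the step I expect to be the main obstacle. Set $x=\delta/a$. Then
\[
G/a=1+x-\bigl(1+\tfrac{d}{d-2}x\bigr)^{\frac{d-2}{d}}.
\]
By concavity and Taylor expansion, $(1+\tfrac{d}{d-2}x)^{(d-2)/d}=1+x-\tfrac{1}{d-2}x^2+O(x^3)$, so $G/a\sim\tfrac{x^2}{d-2}$ for small $x$. For large $x$, $G/a\le 1+x\lesssim x^2$ once $x\ge1$. In between, $G/a\le 1+x$ and $x^2$ are comparable on compact subsets of $(0,\infty)$, and $G>0$ only matters as an upper bound. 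Thus $G(\delta)\lesssim\delta^2/a\lesssim\delta^2$ for all $\delta>0$, with the constant depending only on $d$. This gives $B^2\lesssim\delta^2A$, which is \eqref{B}.

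A minor point to handle is that $u_s\in\dot H^1$ when $A<\infty$ and $\phi\in C^1$. Here $\nabla u_s\in L^2$, and $u_s$ still satisfies the Sobolev inequality, so Lemma~\ref{GNsharpz} applies directly, for instance via approximation by truncated $\phi$ if needed.
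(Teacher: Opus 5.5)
Your proposal is correct and is essentially the paper's argument: the paper applies the sharp Sobolev inequality to $e^{i\lambda\phi}u(t)$ and uses non-positivity of the discriminant of the resulting quadratic in $\lambda$, which is equivalent to your minimization over $s$. It then bounds the same quantity $G(\delta)$ by $\frac{1}{(d-2)\|W\|_{\dot H^1}^2}\delta^2$. Your case split (small $\delta$, compact middle range, large $\delta$) for $G(\delta)\lesssim\delta^2$ is valid, but a single Taylor expansion with remainder is shorter: for $f(x)=(1+\frac{d}{d-2}x)^{\frac{d-2}{d}}$ one has $|f''(x)|\le \frac{2}{d-2}$ on $x\ge 0$, which gives exactly the paper's constant in one line.
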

\begin{proof}
The sharp Sobolev inequality in Lemma~\ref{GNsharpz} yields
\[
\Vert e^{i\lambda \phi }u(t)\Vert _{\dot{H}^{1}}^{2}\Vert W\Vert
_{2^{\ast }}^{2}\geq \Vert W\Vert _{\dot{H}^{1}}^{2}\Vert u(t)\Vert
_{2^{\ast }}^{2},
\]
 for every real number $\lambda$ and for any real-valued function $\phi$.
Equivalently,%
\begin{align*}
\lambda ^{2}\int_{\mathbb{R}^{d}}|\nabla\phi |^{2}|u(t)|^{2}dx &+2\lambda \Im
\int_{\mathbb{R}^{d}}\nabla \phi \cdot \nabla u(t)\overline{u(t)}dx \\
 &+\Vert
u(t)\Vert _{\dot{H}^{1}}^{2}-\frac{\Vert W\Vert _{\dot{H}^{1}}^{2}}{%
\Vert W\Vert _{2^{\ast }}^{2}}\Vert u(t)\Vert _{2^{\ast }}^{2}\geq 0. 
\end{align*}
Hence, the discriminant of the equation in $\lambda$ must be non-positive and  
\begin{align}\nonumber
&\left\vert \Im\int_{\mathbb{R}^{d}}\nabla \phi \cdot \nabla u(t)%
\overline{u(t)}dx\right\vert ^{2}\\
\leq& \Big(\Vert u(t)\Vert _{\dot{H}^{1}}^{2}-\frac{\Vert W\Vert _{\dot{H}^{1}}^{2}}{\Vert W\Vert
_{2^{\ast }}^{2}}\Vert u(t)\Vert _{2^{\ast }}^{2}\Big)\int_{\mathbb{R}^{d}}|\nabla\phi
|^{2}|u(t)|^{2}dx.  \label{B1}
\end{align}%
From $\delta(u(t))=\|u(t)\|_{\dot H^1}^2-\|W\|_{\dot H^1}^2$ and $E(u(t))=E(W)$, we have
\begin{align*}
\Big| \Vert u(t)\Vert _{\dot{H}^{1}}^{2}-\frac{\Vert W\Vert _{\dot{H}^{1}}^{2}}{\Vert W\Vert _{2^{\ast }}^{2}}\Vert u(t)\Vert _{2^{\ast
}}^{2}\Big|
=&\Big|\Vert u(t)\Vert _{\dot{H}^{1}}^{2}-\Vert W\Vert _{\dot{H}^{1}}^{2}\Big[1+\frac{2^{\ast }\delta(u(t))}{2\Vert W\Vert _{\dot{H}^{1}}^{2}}\Big]^{\frac{2}{2^{\ast }}}\Big| \\
=&\Big| \delta(u(t))-\Vert W\Vert _{\dot{H}^{1}}^{2}\Big(\Big[1+\frac{2^{\ast }\delta(u(t))}{2\Vert W\Vert _{\dot{H}^{1}}^{2}}\Big]^{\frac{2}{2^{\ast }}}-1\Big)\Big| \\
\leq &\frac1{(d-2)\|W\|_{\dot{H}^1}^2}\delta(u(t))^{2},
\end{align*}%
where the last inequality follows from
$$\Big|\delta-C_1\Big(\big(1+\frac{2^\ast\delta}{2C_1}\big)^\frac{2}{2^\ast}-1\Big)\Big|\leq \frac{1}{(d-2)C_1}\delta^2,\quad \forall\;C_1>0,\delta\geq0.$$
Plugging this into \eqref{B1} implies \eqref{B}. This completes the proof.
\end{proof}

{Lemma \ref{lem:banica} yields the following properties of finite-variance solutions on the upper threshold sheet.} 
\begin{lemma}\label{unib}
Let $u$ be a {solution on the upper threshold sheet}  with $u|_{t=0}=u_0\in \Sigma$.
If $u$ is global forward in time, then:
\begin{enumerate}
\item[(i)]
\begin{equation}\label{finiteb}
    \sup_{t\geq0}\|xu(t)\|_{L_x^2}<\infty, \qquad \|u(t)\|_{L_x^2}=\|u_0\|_{L_x^2}<\infty;
\end{equation}

\item[(ii)] 
There exists $c_0>0$ such that
 \begin{equation}\label{expo}
    \int_t^{\infty}\delta(u(\tau))\, d\tau\lesssim e^{-c_0t},
\end{equation}
and there exists a sequence $t_n\to\infty$ such that $\delta(u(t_n))\to 0$. 
\end{enumerate}
\end{lemma}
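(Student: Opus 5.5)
We would work with the full (unlocalized) variance $V(t):=\int|x|^2|u(t)|^2\,dx$, which is finite initially since $u_0\in\Sigma$. Mass conservation is standard for $H^1$ solutions, and a standard approximation argument gives $V\in C^2$ with
\[
V'(t)=4\Im\int x\cdot\nabla u\,\bar u\,dx,\qquad V''(t)=8K(u(t))=-\tfrac{16}{d-2}\delta(u(t))<0,
\]
using \eqref{eq:Kdelta}. So $V$ is concave, and $V'$ is strictly decreasing.

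\textbf{Step 1: $V'(t)>0$ for all $t\ge0$.} If $V'(t_0)\le0$ at some $t_0$, then $V'(t)\le V'(t_0+1)<0$ for $t\ge t_0+1$. Hence $V(t)\to-\infty$, contradicting $V\ge0$ and global existence forward in time. As a consequence, $V'$ decreases to a limit $\ell\ge0$.

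\textbf{Step 2: part (i).} Apply Lemma~\ref{lem:banica} with $\phi=|x|^2/2$ (a routine truncation extends it to this unbounded weight), so $\nabla\phi=x$. This gives
\[
|V'(t)|\lesssim\delta(u(t))\,V(t)^{1/2}.
\]
Since $V$ is concave and $V'>0$, we have $\delta\sim -V''$, so $V'\lesssim -V''V^{1/2}$. To show $V$ is bounded, first claim $\ell=0$. Otherwise $V''\lesssim -\ell V^{-1/2}$ while $V(t)\gtrsim\ell t$, so $-V''\gtrsim\ell^{1/2}t^{-1/2}$ is not integrable. That contradicts $\int_0^\infty(-V'')=V'(0)-\ell<\infty$.

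Next, since $V$ is increasing, for $t\ge s$ we have $V'(t)\lesssim -V''(t)V(t)^{1/2}$. Integrating from $t$ to $\infty$ and using $\ell=0$ gives $V'(t)\lesssim\int_t^\infty V'(\tau)V(\tau)^{-1/2}\,d\tau\cdot(\dots)$, which is awkward in this form. A cleaner route is to use monotonicity: $V'(\tau)\le V'(t)$ for $\tau\ge t$, so
\[
V'(t)=\int_t^\infty(-V'')\gtrsim\int_t^\infty V'V^{-1/2}=2\bigl[V^{1/2}\bigr]_t^\infty.
\]
This bounds $\lim V^{1/2}$ by $V(t)^{1/2}+CV'(t)<\infty$. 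So $V$ is bounded, which gives \eqref{finiteb}.

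\textbf{Step 3: part (ii).} Since $V$ is bounded, Banica gives $V'\le C\delta$, with $C$ depending on $\sup V$. Integrating $V''=-c\,\delta$ on $[t,\infty)$ with $V'(\infty)=0$ yields
\[
\int_t^\infty\delta=c^{-1}V'(t)\le C\delta(u(t)).
\]
Setting $G(t):=\int_t^\infty\delta$, this reads $G\le -CG'$, and Gronwall gives \eqref{expo}. Because $\delta\in L^1(0,\infty)$, there is a sequence $t_n\to\infty$ with $\delta(u(t_n))\to0$.

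\textbf{Main obstacle.} The main obstacle is Step 2, the uniform bound on the variance. There one has to rule out slow growth of $V$ using only the quadratic Banica bound; the monotonicity and integration argument above is what I would try first. Justifying the virial identity and Banica's inequality with the unbounded weight (via the localized $V_R$, $M_R$ and $R\to\infty$) is routine.
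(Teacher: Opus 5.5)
Your proof is correct and follows the paper's argument essentially step for step: concavity of the variance with $V''=-\tfrac{16}{d-2}\delta$, positivity of $V'$, Banica's inequality with weight $|x|^2$ giving $V'V^{-1/2}\lesssim -V''$, integration to bound $V$, and then $V'\lesssim -V''$ (equivalently $G\lesssim -G'$) for the exponential decay. The one slip is in your justification of $\ell=0$: from $-V''\gtrsim \ell V^{-1/2}$ you need the upper bound $V(t)\le V(0)+V'(0)t$, not the lower bound $V\gtrsim \ell t$. The step is dispensable anyway, for two reasons: your Step 2 integration works with $V'(t)-\ell$ in place of $V'(t)$ (the paper integrates over $[0,t]$ and never needs $\ell$), and once $V$ is bounded, $V'\le C\delta$ with $\delta\in L^1$ already forces $\ell=0$.
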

\begin{proof}
    Define $V(t)=\int_{\mathbb{R}^{d}}|x|^{2}|u(t)|^{2}dx$. Direct computation gives 
\begin{align*}
V^{\prime }(t)=&4\Im\int_{\mathbb{R}^{d}}x\cdot \nabla u(t)
\overline{u(t)}\, dx; \\
V^{\prime \prime }(t) =&8\int_{\mathbb{R}^{d}}|\nabla u(t)|^{2}-|u(t)|^{2^*}\,dx=-\frac{16}{d-2}\delta(u(t)).
\end{align*}
Since $\delta(u(t))>0$, we have $V''(t)<0$ and
\begin{equation}\label{V1}
V^{\prime }(t)>0,\;\forall\ t\in [0,\infty ).
\end{equation}
Indeed, if there exists $t_0>0$ such that $V'(t_0)\leq0$, then
\[
V^{\prime }(t)\leq V^{\prime }(t_{0}+1)<0,\;\forall\ t\in [
t_{0}+1,\infty ),
\]
which shows $V(t)\to-\infty$ as $t\to\infty$, contradicting $V(t)\ge 0$. 

Applying Lemma~\ref{lem:banica} with $\phi=|x|^2$ gives
\begin{equation}\label{bon}
V^{\prime }(t)^{2}\leq CV^{\prime \prime }(t)^{2}V(t).
\end{equation}
Together with \eqref{V1}, this yields
\[
\frac{V^{\prime }(t)}{\sqrt{V(t)}}\leq -CV^{\prime \prime }(t).
\]
Integrating this gives
\[
\sqrt{V(t)}-\sqrt{V(0)}\leq -C(V^{\prime }(t)-V^{\prime }(0))\leq CV^{\prime
}(0)<\infty.
\]
Together with Hardy's inequality and mass conservation, this yields part (i).
From \eqref{finiteb} and \eqref{bon}, we obtain
\[
V^{\prime }(t)\leq -CV^{\prime \prime }(t).
\]%
Hence, $V'(t)\lesssim e^{-c_0t}$ for some $c_0>0$.  Integrating above inequality gives
\[
V^{\prime }(t)=-\int_{t}^{\infty }V^{\prime \prime }(\tau )d\tau =\frac{16}{%
d-2}\int_{t}^{\infty }\delta(u(\tau ))d\tau \leq Ce^{-c_0t}.
\]
This proves part (ii) of this lemma.
\end{proof}


\begin{proof}[{\bf Proof of Theorem~\ref{thm:general-threshold}(i)}]
It suffices to show that $I$ is unbounded implies $u=W^+$ modulo symmetries. Applying the time-reversal symmetry $u(t,x)\mapsto\overline{u(-t,x)}$ if necessary, $\sup I=\infty$.  By Lemma~\ref{unib}(ii), there exists
an increasing sequence $t_n\to\infty$ such that
$\delta(u(t_n))\to0$. Let $\eta_*$ be as in Corollary~\ref{t2.3}, and we divide the proof into two cases:
\begin{enumerate}
\item[(i)] There exists $t_0>0$ such that
\[
\sup_{t\in[t_0,\infty)}\delta(u(t)) \leq \eta_*.
\]
\item[(ii)] There exists an increasing sequence $t_n^-\to\infty$ such that
\[
\delta(u(t_n^-))>\eta_*\qtq{for all $n$.}
\]
\end{enumerate}

In case (i), by Corollary~\ref{t2.3}, after a time translation, we get
$u=W^+$ modulo symmetries.

Suppose that case (ii) holds.  After passing to a subsequence, we may arrange the times as displayed below.
\[
\delta(u(t_n)) < \eta_*,\qquad t_n^- < t_n < t_{n+1}^-.
\]
Set
\[
t_n^+ = \inf\bigl\{t\in[t_n^-,t_n]:
\sup_{\tau\in[t,t_n]} \delta(u(\tau))<\eta_*\bigr\},
\]
then
\[
t_n^+\to\infty,\qquad\delta(u(t_n^+))=\eta_*.
\]
Denote $I_n=[t_n^+,t_n]$.  By Lemma~\ref{modula}, 
there exist 
\begin{equation*}
\theta : I_n\rightarrow \R,\quad x:  I_n\rightarrow \mathbb{R}^d ,\quad \lambda: I_n\rightarrow \mathbb{R}_+, \quad \alpha: I_n\rightarrow \mathbb{R}
\end{equation*}
such that on $I_n$,
\begin{equation}\label{decompz2}
v(t):=\alpha(t)W+\tilde{u}(t)=u_{[\theta(t),\lambda(t),x(t)]}(t)-W,
\end{equation}
\begin{align}
&|\alpha(t)|\sim \|v(t)\|_{\dot{H}^1}\sim \|\tilde{u}(t)\|_{\dot{H}^1}\sim \delta(u(t)),\label{mo12}\\
&\bigl|{x^{\prime}(t)}-\tfrac{\lambda^{\prime}(t)}{\lambda(t)}x(t)\bigr|+|\alpha^{\prime}(t)|+|\theta^{\prime}(t)|+\bigl|\tfrac{\lambda^{\prime}(t)}{\lambda(t)}\bigr|\leq C\lambda(t)^2 \delta(u(t)). \label{mo22}
\end{align}
The $L^2$ regularity gained from finite variance \eqref{finiteb} shows that
\begin{equation}\label{moll}
    \lambda(t)\gtrsim 1.
\end{equation}
Indeed, the mass conservation and $\|v(t)\|_{L_{x}^{2}(|x|\leq 1)} \lesssim \|v\|_{\dot{H}^1} \lesssim \delta_0$ yield 
\begin{align*}
M(u_0)&=\lambda(t)^{-2}\|W+v(t)\|_{L_x^2}^2\geq \lambda(t)^{-2}\|W+v(t)\|_{L_x^2(|x|\le 1)}^2 \\
 & \ge \lambda(t)^{-2} \bigl(\|W\|_{L_x^2(|x|\le 1)}-\|v(t)\|_{L_x^2(|x|\le 1)}\bigr)^2
 \gtrsim\lambda(t)^{-2}.
 \end{align*}

Next, we claim that
\begin{equation}\label{claimfinal}
    \sup_n \lambda(t_n^+)<\infty.
\end{equation}
Applying Theorem~\ref{Ret2.3} on $I_n$ gives
\begin{equation}\label{bbz}
    \lambda(t_n)\sim\lambda(t_n^+)
\end{equation}
uniformly in $n$, and 
$\lambda(t)\sim\lambda(t_n^+)$
throughout $I_n$. Set
\[
 \widetilde W_n(x):=e^{-i\theta(t_n)}\lambda(t_n)^{\frac{d-2}{2}}
 W(\lambda(t_n)x+x(t_n)).
\]
In view of \eqref{mo12} and $\delta(u(t_n))\to0$, this implies
\begin{equation}\label{equ:dotH1zero}
    \lim_{n\to\infty}\|u(t_n)- \widetilde W_n\|_{\dot H^1}= 0.
\end{equation}
By the conservation of mass, the boundedness of $\delta(u(t_n))$,  \eqref{moll} and $W\in L_x^{\frac{6d}{3d-4}}(\R^d)$, we have
\[
\sup_{n}\|u(t_n)- \widetilde W_n\|_{L_x^{\frac{6d}{3d-4}}}<\infty.
\]
Interpolating with \eqref{equ:dotH1zero} gives
\begin{equation}\label{final2}
    \lim_{n\to\infty}\|u(t_n)- \widetilde W_n\|_{L_x^{\frac{4d}{2d-3}}}= 0.
\end{equation}
If \eqref{claimfinal} fails, then, after passing to a subsequence,
\[
\lim_{n\to\infty}\lambda(t_n^+)=\infty.
\]
From \eqref{bbz} and \eqref{final2}, we have
\begin{equation}\label{final3}
   \lim_{n\to\infty} \|\widetilde W_n\|_{L_x^{\frac{4d}{2d-3}}} =0, \qquad  \lim_{n\to\infty} \|u(t_n)\|_{L_x^{\frac{4d}{2d-3}}} =0.
\end{equation}
For $R>0$, H\"older's inequality implies
\begin{align*}
\int_{\R^d}|u(t_n,x)|^2\,dx
&\leq \int_{|x|\leq R}|u(t_n,x)|^2\,dx
 +\int_{|x|\geq R}|u(t_n,x)|^2\,dx\notag\\
&\lesssim R^{\frac32}
 \bigl(\int_{|x|\leq R}|u(t_n,x)|^{\frac{4d}{2d-3}}\,dx\bigr)^{\frac{2d-3}{2d}}+\int_{|x|\geq R}\tfrac{|xu(t_n,x)|^2}{R^2}\,dx\notag\\
&\lesssim R^{\frac32}
 \bigl(\int_{\R^d}|u(t_n,x)|^{\frac{4d}{2d-3}}\,dx\bigr)^{\frac{2d-3}{2d}}+\frac1{R^2}\int_{\R^d}|xu(t_n,x)|^2\,dx.
\end{align*}
From  Lemma~\ref{unib}(i) and \eqref{final3},  $u(t,x)$ must have zero mass by first taking $n\to\infty$ and then $R\to\infty$. This contradiction  shows \eqref{claimfinal}. Consequently, 
 \[
\lambda(t)\sim\lambda(t_n)\sim\lambda(t_n^{+})\sim 1,\quad \forall t\in I_n.
 \]
Then, as $n\to\infty$, \eqref{mo12},
\eqref{mo22}, and \eqref{expo} yield
\begin{align*}
 \eta_*&\lesssim|\alpha(t_n^+)|\leq|\alpha(t_n)|+\int_{t_n^+}^{t_n}|\alpha'(t)|\,dt\\
 &\lesssim\delta(u(t_n))+\int_{t_n^+}^{\infty}\delta(u(t))\,dt
 \lesssim\delta(u(t_n))+e^{-c_0t_n^+}\to 0.
\end{align*}
This contradiction excludes case (ii) and concludes the proof.
\end{proof}

\subsection{Concentration compactness and almost periodicity}
\label{sec:sequential}\label{sec:bounded-proof}

Throughout the remainder of this section, we assume $d\geq5$. Threshold
solutions satisfy \eqref{hyp:bounded} and, unless otherwise stated,
the bound \eqref{hyp:ceiling}.

We use the local theory and stability in Theorem~\ref{T:local} and Lemma~\ref{pertu}.

For $\lambda_0>0$ and $x_0\in\R^d$, define
\[
 (g_{\lambda_0,x_0}f)(x)
 :=\lambda_0^{-\frac{d-2}{2}}
 f\left(\frac{x-x_0}{\lambda_0}\right).
\]
We denote by $G$ the group of all such transformations.

\begin{lemma}[Linear profile decomposition, \protect\cite{KV1,MMMZ25}]\label{lem:linear-profiles}
Let $\{u_n\}$ be bounded in $\dot H^1(\R^d)$.  After passing to a
subsequence, there exist $\{\phi^j\}_{j\geq1}\subset\dot H^1(\R^d)$,
$g_n^j=g_{\lambda_n^j,x_n^j}\in G$, $t_n^j\in\R$, and
$w_n^J\in\dot H^1(\mathbb{R}^d)$ such that
\begin{equation}\label{eq:full-lpd}
 u_n=\sum_{j=1}^Jg_n^je^{it_n^j\Delta}\phi^j+w_n^J
\end{equation}
for all $J\geq1$.  After modifying the profiles, $t_n^j\equiv0$ or
$t_n^j\to\pm\infty$ for every $j$.  Moreover,
\begin{equation}\label{eq:linear-remainder-small}
 \lim_{J\to\infty}\limsup_{n\to\infty}
 S_\R(e^{it\Delta}w_n^J)=0.
\end{equation}
For $j\neq k$, the parameters are asymptotically orthogonal:
\begin{align}\label{eq:parameter-orthogonality}
 \frac{\lambda_n^j}{\lambda_n^k}
 +\frac{\lambda_n^k}{\lambda_n^j}
 +\frac{|x_n^j-x_n^k|^2}{\lambda_n^j\lambda_n^k}
 +\frac{|t_n^j(\lambda_n^j)^2-t_n^k(\lambda_n^k)^2|}
 {\lambda_n^j\lambda_n^k}\to\infty.
\end{align}
For each fixed $J$,
\begin{align}
 \|\nabla u_n\|_{L_x^2}^2
 &=\sum_{j=1}^J\|\nabla\phi^j\|_{L_x^2}^2
   +\|\nabla w_n^J\|_{L_x^2}^2+o_n(1),\label{eq:kinetic-decoupling}\\
 \|u_n\|_{L_x^{2^*}}^{2^*}
&=\sum_{j=1}^J\|e^{it_n^j\Delta}\phi^j\|_{L_x^{2^*}}^{2^*}
   +\|w_n^J\|_{L_x^{2^*}}^{2^*}+o_n(1).
   \label{eq:potential-decoupling}
\end{align}
\end{lemma}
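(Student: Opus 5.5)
The plan is to follow the standard Keraani/Killip--Vi\c{s}an construction of the linear profile decomposition in $\dot H^1$, in three stages: inverse Strichartz, iterative extraction, then decoupling. The first stage is an inverse Strichartz (refined Strichartz) estimate. If $\{f_n\}$ is bounded by $A$ in $\dot H^1$ and $\liminf_n S_\R(e^{it\Delta}f_n)\ge \varepsilon$, then after passing to a subsequence there are $\phi\in\dot H^1\setminus\{0\}$, $(\lambda_n,x_n)$ and $t_n$ such that
\[
g_{\lambda_n,x_n}^{-1}e^{it_n\Delta}f_n\rightharpoonup\phi
\]
weakly in $\dot H^1$, with $\|\nabla\phi\|_{L^2}\gtrsim A(\varepsilon/A)^{C}$. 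This comes from a Besov-refined Strichartz inequality,
\[
\|e^{it\Delta}f\|_{L^{2(d+2)/(d-2)}_{t,x}}\lesssim \|f\|_{\dot H^1}^{1-\theta}\sup_N\|e^{it\Delta}P_Nf\|_{L^{2(d+2)/(d-2)}_{t,x}}^{\theta},
\]
followed by bilinear/Strichartz estimates that locate a frequency $N_n$ and a spacetime point $(t_n,x_n)$ where $|e^{it_n\Delta}P_{N_n}f_n(x_n)|\gtrsim N_n^{(d-2)/2}$. We then set $\lambda_n=N_n^{-1}$ and take a weak limit. Since these are exactly the cited results of \cite{KV1,MMMZ25}, I would quote this step rather than reprove it.

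The second stage is iteration. Set $w_n^0=u_n$. At step $J$, if $\limsup_n S_\R(e^{it\Delta}w_n^{J-1})=0$, stop. Otherwise, apply the first stage to $w_n^{J-1}$ to obtain $\phi^J$ and the parameters $(\lambda_n^J,x_n^J,t_n^J)$, and set
\[
w_n^J=w_n^{J-1}-g_n^Je^{it_n^J\Delta}\phi^J.
\]
Here $t_n^J$ absorbs the rescaled time, so $t_n^J\mapsto -t_n(\lambda_n^J)^{-2}$. By weak convergence, $\|w_n^J\|_{\dot H^1}^2=\|w_n^{J-1}\|_{\dot H^1}^2-\|\phi^J\|_{\dot H^1}^2+o(1)$. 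Summing gives the kinetic decoupling \eqref{eq:kinetic-decoupling}, and hence $\sum_j\|\phi^j\|_{\dot H^1}^2\le\limsup\|u_n\|_{\dot H^1}^2$. The lower bound on $\|\phi^J\|$ in terms of $\limsup S_\R(e^{it\Delta}w_n^{J-1})$ then forces \eqref{eq:linear-remainder-small}. After a further subsequence, either $t_n^j\to\pm\infty$ or $t_n^j\to t^j$ finite. In the finite case, replace $\phi^j$ by $e^{it^j\Delta}\phi^j$ and set $t_n^j\equiv0$; the error $g_n^j(e^{it_n^j\Delta}-e^{it^j\Delta})\phi^j$ tends to $0$ in $\dot H^1$ and is absorbed into $w_n^J$.

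The third stage proves orthogonality \eqref{eq:parameter-orthogonality} by induction, in the usual way. Suppose that for some $j<k$ the quantity in \eqref{eq:parameter-orthogonality} stayed bounded along a subsequence. Then $(g_n^k)^{-1}g_n^j$ together with the relative time shift would converge to a fixed symmetry, since the operators converge strongly. The weak limit defining $\phi^k$ would then be the weak limit of a transformed $w_n^{k-1}$. But $w_n^{k-1}$, transformed by the $j$-th parameters, converges weakly to $0$, because $\phi^j$ was removed and the later profiles are orthogonal to $j$ by induction. This gives $\phi^k=0$, a contradiction.

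The step I expect to be the main obstacle is the potential-energy decoupling \eqref{eq:potential-decoupling}. The plan is to first show $\|w_n^J\|_{L^{2^*}}$ and the profiles decouple pairwise, using \eqref{eq:parameter-orthogonality}. For $t_n^j\to\pm\infty$, the dispersive decay gives $\|e^{it_n^j\Delta}\phi^j\|_{L^{2^*}}\to0$, so these profiles contribute $o(1)$. For $t_n^j\equiv0$, we approximate by $C_c^\infty$ functions; orthogonality of scales or centers then makes cross terms $\int|g_n^j\phi^j|^{a}|g_n^k\phi^k|^{b}$ vanish. The term involving $w_n^J$ is handled by the elementary inequality
\[
\bigl||\sum a_j|^{2^*}-\sum|a_j|^{2^*}\bigr|\lesssim_J\sum_{j\ne k}|a_j||a_k|^{2^*-1},
\]
combined with Rellich-type local compactness. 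Specifically, $(g_n^j)^{-1}w_n^J\rightharpoonup0$ in $\dot H^1$, hence it converges to $0$ strongly in $L^2_{loc}$, and so in $L^{p}_{loc}$ for $p<2^*$, which kills the cross terms against a compactly supported profile.
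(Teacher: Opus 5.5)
Your proposal is correct and is essentially the standard Keraani/Killip--Vi\c{s}an argument that the paper takes over by citation from \cite{KV1,MMMZ25}: an inverse Strichartz inequality obtained from refined Strichartz, iterative extraction with $\dot H^1$ decoupling, and orthogonality of parameters by induction. (In that argument the concentration point is located by H\"older interpolation between the Strichartz $L^{2(d+2)/d}_{t,x}$ bound and $L^\infty_{t,x}$; no bilinear estimate is needed.) The only difference is in the potential-energy decoupling: you prove it globally from pairwise cross terms and Rellich compactness, which works because both exponents $1$ and $2^*-1$ appearing in the cross terms are below $2^*$, whereas Killip--Vi\c{s}an obtain it step by step via the Brezis--Lieb lemma inside the inverse Strichartz inequality.
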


We now define the nonlinear profiles
$U^j:I^j\times\R^d\to\C$ associated with $\phi^j$ and $t_n^j$ as
follows:
\begin{itemize}
\item If $t_n^j\equiv0$, then $U^j$ is the maximal-lifespan solution
with initial data $U^j(0)=\phi^j$.
\item If $t_n^j\to\infty$, then $U^j$ is the maximal-lifespan solution
that scatters forward in time to $e^{it\Delta}\phi^j$.
\item If $t_n^j\to-\infty$, then $U^j$ is the maximal-lifespan solution
that scatters backward in time to $e^{it\Delta}\phi^j$.
\end{itemize}
For each $j,n\geq1$, set
\begin{equation}\label{eq:rescaled-nonlinear-profile}
 U_n^j(t):=g_n^jU^j\bigl(t_n^j+\tfrac{t}{(\lambda_n^j)^2}\bigr),\quad
 I_n^j:=\bigl\{t\in\R:
 t_n^j+\tfrac{t}{(\lambda_n^j)^2}\in I^j\bigr\}.
\end{equation}

\begin{definition}[Almost periodic solution]
A solution $v:I\times\R^d\to\C$ to \eqref{1.1} is almost periodic modulo {the scaling-translation group $G$} if
there exist $N:I\to(0,\infty)$, $x:I\to\R^d$, and
$C:(0,\infty)\to(0,\infty)$ such that, for every $t\in I$ and
$\eta>0$,
\begin{align*}
 &\int_{|x-x(t)|>C(\eta)/N(t)}|\nabla v(t,x)|^2\,dx
 +\int_{|\xi|>C(\eta)N(t)}|\xi|^2|\widehat v(t,\xi)|^2\,d\xi
 <\eta.
\end{align*}
By Sobolev embedding, $C(\eta)$ may also be chosen so that
\[
 \int_{|x-x(t)|>C(\eta)/N(t)}
 |v(t,x)|^{2^*}\,dx<\eta.
\]
Equivalently, the orbit
 $\bigl\{g_{1/N(t),x(t)}^{-1}v(t):t\in I\bigr\}$
is precompact in $\dot H^1(\R^d)$.  In particular,
$\sup_{t\in I}\|\nabla v(t)\|_{L_x^2}<\infty$.
\end{definition}

\begin{lemma}[Exclusion of finite lifespan endpoints]\label{lem:finite-ap}
Every maximal-lifespan almost-periodic solution is global. 
\end{lemma}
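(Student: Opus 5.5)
We argue by contradiction, following the standard Kenig--Merle/Killip--Vi\c{s}an scheme. Let $v:I\times\R^d\to\C$ be a maximal-lifespan almost-periodic solution, and suppose $T_+:=\sup I<\infty$; the case $\inf I>-\infty$ follows by time reversal $v\mapsto\overline{v(-t)}$. We may assume $v\not\equiv0$. By Theorem~\ref{T:local}(ii), $S_{[t_0,T_+)}(v)=\infty$ for every $t_0\in I$.

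\textbf{Step 1: $N(t)\to\infty$ as $t\uparrow T_+$.} If not, take $t_n\uparrow T_+$ with $N(t_n)\to N_*<\infty$ (passing to a subsequence). By precompactness of $\{g^{-1}_{1/N(t_n),x(t_n)}v(t_n)\}$, a further subsequence satisfies $g^{-1}_{1/N(t_n),x(t_n)}v(t_n)\to\phi$ in $\dot H^1$, with $\phi\neq0$. Since $N(t_n)$ stays bounded and bounded away from zero, the local theory gives a solution with data $\phi$ on a fixed interval $[0,\tau]$ with finite scattering size. Stability (Lemma~\ref{pertu}) then shows that $v$ exists with finite scattering size on $[t_n,t_n+\tau N(t_n)^{-2}]$, an interval of length bounded below. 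For $n$ large this interval extends past $T_+$, a contradiction. The case $N(t_n)\to0$ is excluded the same way, since the extension interval then becomes even longer. Hence $N(t)\to\infty$.

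\textbf{Step 2: the solution vanishes in a weak sense.} Fix $R>0$. For $t$ near $T_+$, almost periodicity puts all but $\eta$ of the $\dot H^1$ and $L^{2^*}$ mass inside the ball $|x-x(t)|\le C(\eta)/N(t)$, whose radius tends to $0$. We first control $x(t)$: a standard argument shows $x(t)$ stays bounded as $t\uparrow T_+$. Indeed, $v(t)$ cannot concentrate at spatial infinity within finite time, because the localized mass $\int\psi_R|v|^2$ has time derivative $O(R^{-1}\|\nabla v\|_2\|v\|_{L^2(\mathrm{supp}\nabla\psi_R)})=O(1)$ by Hardy/Sobolev. After passing to a subsequence, $x(t_n)\to x_*$.

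Now use the localized mass $M_{R}(t):=\int\psi(\frac{x-x_*}{R})|v(t)|^2\,dx$, which is finite for $\dot H^1$ data by H\"older and Sobolev ($d\ge3$). It satisfies
\[
|\partial_tM_R(t)|\lesssim R^{-1}\|\nabla v\|_{L^2}\|v\|_{L^{2^*}}R^{\frac{d}{d}}\lesssim 1,
\]
uniformly in $t$. As $t\uparrow T_+$, concentration gives $M_R(t)\to 0$: the mass near $x_*$ inside a ball of radius $r\to0$ is bounded by $r^2\|v\|_{L^{2^*}}^2\to0$, and the remaining part has small $L^{2^*}$ norm on a ball of radius $\sim R$, hence $L^2$ mass at most $\eta^{2/2^*}R^2$. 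Integrating the derivative bound back from $T_+$ yields $M_R(t)\lesssim T_+-t+\eta R^2$. Sending $R\to\infty$ with the choice $R^2\eta\to0$, we get, for each fixed $t\in I$, that $\|v(t)\|_{L^2(|x|\le R)}^2\lesssim T_+-t+o(1)$ for all large $R$. Thus $v(t)\in L^2$ with $\|v(t)\|_{L^2}^2\lesssim T_+-t$.

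\textbf{Step 3: contradiction with mass conservation.} Since $v(t)\in L^2$, mass is conserved along the flow (justified via $H^1$ persistence). Letting $t\uparrow T_+$ in the bound $\|v(t)\|_{L^2}^2\lesssim T_+-t$ forces the conserved mass to vanish, so $v\equiv0$, contradicting our assumption.

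The main obstacle is Step 2: making the localized mass argument rigorous in $\dot H^1$ without a priori $L^2$ bounds, which includes controlling $x(t)$ near $T_+$. This should be manageable because $\|v\|_{L^{2^*}}$ is bounded, so $L^2$ mass on balls is finite.
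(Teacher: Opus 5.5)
Your route is the paper's. Step 1 (frequency scale blowing up at $T_+$ via precompactness plus stability) matches it. So do Steps 2--3: vanishing localized mass, the flux bound $|\partial_t\|v\chi_R\|_{L^2}^2|\lesssim\|\nabla v\|_{L^2}\|v\|_{L^{2^*}}\|\nabla\chi_R\|_{L^d}\lesssim1$, then $\|v(t)\|_{L^2}^2\lesssim T_+-t$ and mass conservation.

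The one step that does not hold up is your claim that $x(t)$ stays bounded as $t\uparrow T_+$. The justification via the $O(1)$ flux of localized mass cannot locate $x(t)$. The concentrating part of $v(t)$ carries only $L^2$ mass $\lesssim (C(\eta)/N(t))^2\|v\|_{L^{2^*}}^2\to0$. A bubble of vanishing mass could therefore drift without any detectable change in local mass, so your argument does not prove that claim.

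The step is also unnecessary, and the paper avoids it. Take a cutoff $\chi_R$ supported in a ball of radius $R$, with arbitrary center, and split relative to the moving center $x(t)$ itself:
\[
 \|v(t)\chi_R\|_{L^2}\lesssim \eta R\|v(t)\|_{L^{2^*}}+R\|v(t)\|_{L^{2^*}(|x-x(t)|>\eta R)} .
\]
The first term is H\"older on the ball $B(x(t),\eta R)$, wherever that ball sits. The second term tends to zero as $t\uparrow T_+$, because $\eta R\ge C(\eta')/N(t)$ eventually. So for each fixed $R$, $\|v(t)\chi_R\|_{L^2}\to0$: let $t\uparrow T_+$ first, then $\eta\to0$. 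There is no need to couple $\eta$ with $R$ as in your ``$R^2\eta\to0$''.

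Since $\|\nabla\chi_R\|_{L^d}$ is scale invariant, the flux bound is uniform in $R$. Integrating up to $T_+$ gives $\|v(t)\chi_R\|_{L^2}^2\le C(T_+-t)$ with $C$ independent of $R$, and you conclude as in your Step 3. If you delete the $x(t)$ detour and replace $x_*$ by an arbitrary center, your proof coincides with the paper's. What you flagged as the main obstacle then disappears.
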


\begin{proof} The proof is similar to \cite{kenig-merle1, DM08,MMMZ25}.
Let $v$ be an almost-periodic solution with maximal-lifespan $I=(T_-,T_+)$. It remains to show $T_+=\infty$; the proof that $T_-=-\infty$ is identical. Suppose that $T_+<\infty$.  We first claim that
\begin{equation}\label{eq:frequency-at-endpoint}
 \liminf_{t\uparrow T_+}N(t)=\infty.
\end{equation}
If \eqref{eq:frequency-at-endpoint} fails, then there exists
$t_n\uparrow T_+$ such that $\lim_{n\to\infty}N(t_n)<\infty$.  Define
\[
 v_n(t,x):=N(t_n)^{-\frac{d-2}{2}}
 v\bigl(t_n+\tfrac{t}{N(t_n)^2},
 x(t_n)+\tfrac{x}{N(t_n)}\bigr).
\]
By definition, $\{v_n(0)\}$ is precompact in $\dot H^1(\R^d)$. Passing to a
subsequence, we may assume that $v_n(0)\to v_0$ in $\dot H^1(\R^d)$.  Let
$V$ be the solution with $V(0)=v_0$.  The local theory and stability give a compact
interval $J\ni0$ such that
\[
 \limsup_{n\to\infty}S_J(v_n)<\infty.
\]
Since $t_n\uparrow T_+$ and $\lim_{n\to\infty}N(t_n)<\infty$, stability extends
$v$ beyond $T_+$. This contradiction proves
\eqref{eq:frequency-at-endpoint}.

Let $\chi_R$ be a smooth cutoff supported on $\{|x|\leq R\}$.  We shall prove
\begin{equation}\label{eq:local-mass-vanishes}
 \lim_{t\uparrow T_+}\|v(t)\chi_R\|_{L_x^2}=0,\;\forall\;R>0.
\end{equation}
Indeed, for $0<\eta\ll1$, H\"older's inequality and Sobolev embedding
give
\begin{align*}
 \|v(t)\chi_R\|_{L_x^2}
 &\leq \|v(t)\|_{L_x^2(|x-x(t)|\leq\eta R)}
 +\|v(t)\chi_R\|_{L_x^2(|x-x(t)|>\eta R)}\\
 &\lesssim \eta R\|v(t)\|_{L_x^{2^*}}
 +R\|v(t)\|_{L_x^{2^*}(|x-x(t)|>\eta R)}.
\end{align*}
Choosing $\eta$ sufficiently small and using almost periodicity with \eqref{eq:frequency-at-endpoint}, we obtain \eqref{eq:local-mass-vanishes}. From H\"older and Hardy's inequality, we have
\[
 \left|\frac{d}{dt}\|v(t)\chi_R\|_{L_x^2}^2\right|
 \lesssim  \| v(t)\|_{\dot H^1}
 \|v(t)\|_{L_x^{2^*}}
 \tfrac{\| \nabla\chi_R\|_{L_x^d}}{R}
 \lesssim \| v(t)\|_{\dot H^1}^2\|\nabla \chi\|_{L_x^d}\lesssim 1.
\]
This uniform upper bound yields that, for $C>0$ and $t<t_2<T_+$,
\[
 \|v(t)\chi_R\|_{L_x^2}^2
 \leq \|v(t_2)\chi_R\|_{L_x^2}^2+C(t_2-t).
\]
From \eqref{eq:local-mass-vanishes}, taking $t_2\uparrow T_+$ yields
\[
 \|v(t)\chi_R\|_{L_x^2}^2\leq C(T_+-t).
\]
Taking $R\to\infty$ shows that $v(t)\in L_x^2(\R^d)$ and
$
 \|v(t)\|_{L_x^2}^2\leq C(T_+-t).
$
Mass conservation yields $v\equiv0$. This contradiction completes the proof. 
\end{proof}

We next establish positivity of the profile energies. Recall that $\kappa_d=\bigl(\frac d{d-2}\bigr)^{\frac{d-2}{4}}$.
\begin{lemma}[{Coercivity under the kinetic-energy ceiling}]\label{lem:ceiling-coercivity}
Let $0<M<\kappa_d\|\nabla W\|_{L_x^2}$. Then, there exists $c_M>0$ such that for any $0\neq f\in \dot H^1(\R^d)$
\begin{equation}\label{eq:ceiling-coercivity}
 \|\nabla f\|_{L_x^2}\leq M
 \quad\Longrightarrow\quad
 E(f)\geq c_M\|\nabla f\|_{L_x^2}^2.
\end{equation}
If $\{u_n\}$ is bounded in $\dot H^1(\R^d)$, $\limsup_n\|\nabla u_n\|_{L_x^2}\leq M$, and
\begin{equation}\label{eq:lpd}
 u_n=\sum\nolimits_{j=1}^J g_n^j e^{it_n^j\Delta}\phi^j+w_n^J
\end{equation}
is a linear profile decomposition with associated nonlinear profiles
$U^j$, then
\begin{equation}\label{eq:energy-decoupling}
 E(u_n)=\sum_{j=1}^J E(U^j)+E(w_n^J)+o_n(1),
\end{equation}
where
\[
 E(U^j)=\lim_{n\to\infty}E(e^{it_n^j\Delta}\phi^j)
 =\begin{cases}
 E(\phi^j),&t_n^j\equiv0,\\
 \frac12\|\nabla\phi^j\|_{L_x^2}^2,&|t_n^j|\to\infty.
 \end{cases}
\]
In particular, $E(U^j)\geq0$ for every $j$, and
$\liminf_{n }E(w_n^J)\geq0$ for every $J$.
If, in addition, $\limsup _{n }E(u_n)\leq E(W)$, then
\begin{equation}\label{eq:at-most-one-supercritical-profile}
 \#\left\{j:\|\nabla\phi^j\|_{L_x^2}>\|\nabla W\|_{L_x^2}\right\}
 \leq1.
\end{equation}
\end{lemma}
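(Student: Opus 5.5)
First I will prove the coercivity bound \eqref{eq:ceiling-coercivity}, using only the sharp Sobolev inequality (Lemma~\ref{GNsharpz}). Set $y=\|\nabla f\|_{L_x^2}^2$ and $S=\|\nabla W\|_{L_x^2}^2=\|W\|_{L_x^{2^*}}^{2^*}$. Since $C_d^{2^*}=S^{-2/(d-2)}$, Lemma~\ref{GNsharpz} gives $\|f\|_{L_x^{2^*}}^{2^*}\le S^{-2/(d-2)}y^{d/(d-2)}$. Substituting,
\[
E(f)\ge \tfrac12 y\Bigl(1-\tfrac{d-2}{d}\,(y/S)^{\frac{2}{d-2}}\Bigr).
\]
The bracket is positive exactly when $y<S(\tfrac d{d-2})^{\frac{d-2}2}$, that is, when $\|\nabla f\|_{L_x^2}<\kappa_d\|\nabla W\|_{L_x^2}$. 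This is where $\kappa_d$ comes from. For $\|\nabla f\|_{L_x^2}\le M$ the bracket is at least $1-\tfrac{d-2}{d}(M^2/S)^{2/(d-2)}=:2c_M>0$. Note that this conclusion holds without any sign condition on $\delta(f)$.

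Next comes energy decoupling. Combining \eqref{eq:kinetic-decoupling} and \eqref{eq:potential-decoupling} gives
\[
E(u_n)=\sum_{j\le J}E(e^{it_n^j\Delta}\phi^j)+E(w_n^J)+o_n(1).
\]
I then identify the limit of each profile term. When $t_n^j\equiv0$ the term is $E(\phi^j)$. When $|t_n^j|\to\infty$, the dispersive estimate plus density gives $\|e^{it_n^j\Delta}\phi^j\|_{L_x^{2^*}}\to0$, so the term tends to $\tfrac12\|\nabla\phi^j\|_{L_x^2}^2$. The scattering construction of $U^j$ and energy conservation show that this limit equals $E(U^j)$, since $\|U^j(t)-e^{it\Delta}\phi^j\|_{\dot H^1}\to0$ as $t\to\pm\infty$.

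For nonnegativity, \eqref{eq:kinetic-decoupling} gives $\|\nabla\phi^j\|_{L_x^2}\le M$ and $\limsup_n\|\nabla w_n^J\|_{L_x^2}\le M$. Applying the coercivity bound then yields $E(\phi^j)\ge0$, hence $E(U^j)\ge 0$, and also $\liminf_n E(w_n^J)\ge0$. One point needs care here: $E(U^j)$ is defined through a limit, and in the $t_n^j\equiv0$ case it equals $E(\phi^j)$ exactly, so no issue arises.

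Finally I prove \eqref{eq:at-most-one-supercritical-profile}, which I expect to be the main obstacle. I would argue by contradiction: suppose two profiles $\phi^1,\phi^2$ satisfy $\|\nabla\phi^j\|_{L_x^2}>\|\nabla W\|_{L_x^2}$. Then kinetic decoupling gives $M^2\ge 2S$. But $\kappa_d^2=(\tfrac d{d-2})^{(d-2)/2}$, and this quantity is less than $2$ for $d\ge5$ only in some cases: for $d=5$ it is $(5/3)^{3/2}\approx2.15$. So kinetic counting alone does not suffice, and energy must be used. For $y\in(S,\kappa_d^2S)$ the lower bound $g(y):=\tfrac y2(1-\tfrac{d-2}{d}(y/S)^{2/(d-2)})$ is the function to control; $g$ is increasing up to $y=S$ and decreasing afterwards. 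So two supercritical profiles with kinetic energies $y_1,y_2$ satisfy $y_1+y_2\le M^2<\kappa_d^2S$, which forces each $y_i<(\kappa_d^2-1)S$. Moreover, since all other pieces have nonnegative energy,
\[
E(W)\ge g(y_1)+g(y_2)\ge 2\min\bigl(g(S),\,g((\kappa_d^2-1)S)\bigr).
\]
It then remains to check the elementary inequality $2g(y)>S/d=g(S)$ for $y\in[S,(\kappa_d^2-1)S]$, which gives a contradiction. If that check fails for some $d$, my fallback is the sharper route: use $y_1+y_2\le M^2$ jointly and minimize $g(y_1)+g(y_2)$ on that constraint set, using the concavity of $g$ so the minimum sits at an endpoint.
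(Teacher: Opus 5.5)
Your proposal follows the paper's proof. Your $g(y)$ is exactly $E(W)\psi(y/S)$ with $\psi(s)=\frac d2 s-\frac{d-2}{2}s^{\frac d{d-2}}$, the sharp-Sobolev lower bound the paper uses. Coercivity, energy decoupling, nonnegativity, and the contradiction from $E(U^1)+E(U^2)\le E(W)$ are argued the same way.

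The elementary inequality you leave unchecked is true, and it follows from the same concavity the paper uses. Since $\psi$ is concave with $\psi(1)=1$ and $\psi(\kappa_d^2)=0$, one has $\psi(s)\ge\frac{\kappa_d^2-s}{\kappa_d^2-1}$ on $[1,\kappa_d^2]$. Hence $2\psi(\kappa_d^2-1)\ge\frac{2}{\kappa_d^2-1}>1$, because $\kappa_d^2=\bigl(1+\frac2{d-2}\bigr)^{\frac{d-2}2}<e<3$.

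The paper avoids even this numerical fact by summing the chord bound under the joint constraint $s_1+s_2<\kappa_d^2$, which gives $\psi(s_1)+\psi(s_2)\ge\frac{2\kappa_d^2-s_1-s_2}{\kappa_d^2-1}>\frac{\kappa_d^2}{\kappa_d^2-1}>1$. Your fallback is equivalent. The concave function $g(y_1)+g(y_2)$ on the triangle $\{y_1,y_2\ge S,\ y_1+y_2\le M^2\}$ is minimized at a vertex, where it equals either $2E(W)$ or $E(W)+g(M^2-S)$. The latter exceeds $E(W)$ because $0<M^2-S<\kappa_d^2S$.

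Two small corrections:
\begin{enumerate}
\item $\kappa_d^2$ increases in $d$ and exceeds $2$ for every $d\ge5$ (it equals $2$ at $d=4$). So kinetic counting alone never suffices in the dimensions where the lemma is used, rather than failing only in some cases.
\item For $\liminf_n E(w_n^J)\ge0$ you only control $\limsup_n\|\nabla w_n^J\|_{L_x^2}\le M$. You should therefore apply the coercivity bound with some $M'\in(M,\kappa_d\|\nabla W\|_{L_x^2})$, as the paper does.
\end{enumerate}
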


\begin{proof}
Lemma~\ref{GNsharpz} and the Pohozaev identities yield
\[
 \|f\|_{L_x^{2^*}}^{2^*}
 \leq \|\nabla W\|_{L_x^2}^2
 \bigl(\tfrac{\|\nabla f\|_{L_x^2} }{\|\nabla W\|_{L_x^2} }\bigr)^{2^*}.
\]
This together with  $\|\nabla f\|_{L_x^2}\le M<\kappa_d\|\nabla W\|_{L_x^2}$ imply
\[
 E(f)\geq  
 \Big[\frac{1}{2}-\frac{1}{2^*}
 \Big(\tfrac{M}{\|\nabla W\|_{L_x^2}}\Big)^{\frac4{d-2}}\Big] \|\nabla f\|_{L_x^2}^2.
\]
The bracket is positive by the definition of $\kappa_d$, and \eqref{eq:ceiling-coercivity} follows. By Lemma \ref{lem:linear-profiles}, the kinetic-energy decoupling associated with \eqref{eq:lpd} gives
\[
 \|\nabla\phi^j\|_{L_x^2}\leq M,
 \qquad
 \limsup_{n\to\infty}\|\nabla w_n^J\|_{L_x^2}\leq M.
\]
Applying \eqref{eq:ceiling-coercivity} to $e^{it_n^j\Delta}\phi^j$
and passing to the limit give
\[
 E(U^j)\geq c_M\|\nabla\phi^j\|_{L_x^2}^2.
\]
The energy expansion \eqref{eq:energy-decoupling} follows from
\eqref{eq:kinetic-decoupling}--\eqref{eq:potential-decoupling}.
{For $E(w_n^J)$,
choose $M<M'<\kappa_d\|\nabla W\|_{L_x^2}$ and apply
\eqref{eq:ceiling-coercivity} with $M'$ for all sufficiently large
$n$.}

We now prove \eqref{eq:at-most-one-supercritical-profile}. 
Similar to \eqref{eq:ceiling-coercivity}, the sharp Sobolev inequality gives
\begin{align}\label{concavity0}
   \frac{E(f)}{E(W)}\geq
 \psi\Big(\frac{\|\nabla f\|_{L_x^2}^2}
 {\|\nabla W\|_{L_x^2}^2}\Big),  
\end{align}
where  $\psi(s):=\frac d2s-\frac{d-2}{2}s^{\frac d{d-2}}$ satisfying 
\[
 \psi(1)=1,\qquad \psi(\kappa_d^2)=0,\qquad
 \psi''(s)=-\frac d{d-2}s^{\frac{4-d}{d-2}}<0\quad(s>0).
\]
Thus concavity on $[1,\kappa_d^2]$ yields
\begin{equation}\label{concavity1}
     \psi(s)\geq\frac{\kappa_d^2-s}{\kappa_d^2-1},\quad \forall s\in[1,\kappa_d^2].
\end{equation}
Suppose that 
$\|\nabla\phi^j\|_{L_x^2}>\|\nabla W\|_{L_x^2}$, $j=1,2$, and denote
\[
 s_j:=\frac{\|\nabla\phi^j\|_{L_x^2}^2}
 {\|\nabla W\|_{L_x^2}^2},\qquad j=1,2.
\]
By \eqref{eq:kinetic-decoupling}, we have
\[
 1<s_j<\kappa_d^2 \;(j=1,2),\qquad
 s_1+s_2\leq\frac{M^2}{\|\nabla W\|_{L_x^2}^2}<\kappa_d^2.
\]
Applying \eqref{concavity0} and \eqref{concavity1} to $e^{it_n^j\Delta}\phi^j$ and passing to the limit give
\[
 \frac{E(U^j)}{E(W)}\geq
 \frac{\kappa_d^2-s_j}{\kappa_d^2-1},\qquad j=1,2.
\]
Since $E(U^j)\geq0$ for every $j\le J$, and
$\liminf_{n }E(w_n^J)\geq0$, \eqref{eq:energy-decoupling} and $\limsup_{n }E(u_n)\leq E(W)$ then yield $E(U^1)+E(U^2)\leq E(W)$. Hence,
\[
 1\geq\frac{E(U^1)+E(U^2)}{E(W)}
 \geq\frac{2\kappa_d^2-s_1-s_2}{\kappa_d^2-1}
 >\frac{\kappa_d^2}{\kappa_d^2-1}>1.
\]
This contradiction proves \eqref{eq:at-most-one-supercritical-profile}. The lemma is proved.
\end{proof}

\subsection{{A Palais-Smale argument in the interior of the upper cone and proof of Theorem~\ref{thm:general-subthreshold}(ii)}}\label{subsec:Thmsubthreshold}

We now prove Theorem~\ref{thm:general-subthreshold}(ii). We first extract a minimal-energy almost periodic solution, and then exclude it by a localized virial argument.

\begin{proposition}[Palais--Smale] 
\label{prop:critical-element}
Fix $M<\kappa_d\|\nabla W\|_{L_x^2}$.  Let $\mathcal B_M$ be the collection
of maximal-lifespan solutions $q$ to \eqref{1.1} {with
$I_q=(T_{q,-},T_{q,+})\ni0$}
satisfying
\[
{\|\nabla q(0)\|_{L_x^2}>\|\nabla W\|_{L_x^2},\quad E(q)<E(W),\quad
 \sup_{t\in[0,T_{q,+})}\|\nabla q(t)\|_{L_x^2}\leq M.
}\]
Set
\begin{equation}\label{eq:critical-energy}
 E_c:=\inf_{q\in\mathcal B_M}E(q).
\end{equation}
If $\mathcal B_M\neq\varnothing$, then there exists a nonzero global
solution $v:\R\times\R^d\to\C$ with the following properties:
\begin{enumerate}
 \item[(i)] $E(v)=E_c\in(0,E(W))$,
 $\|\nabla v(t)\|_{L_x^2}>\|\nabla W\|_{L_x^2}$ for $t\in\R$, and
 $\sup\limits_{t\in\R}\|\nabla v(t)\|_{L_x^2}\leq M$;
 \item[(ii)] $v$ is almost periodic modulo {symmetry group $G$} with frequency scale
 $N$ and spatial center $x$, and $N(t)\geq1$ for all $t\in\R$;
 \item[(iii)] there exists $c_0>0$ such that
 \begin{equation}\label{eq:critical-K-gap}
 K(v(t))\leq-c_0,\; \forall\;t\in\R.
 \end{equation}
\end{enumerate}
\end{proposition}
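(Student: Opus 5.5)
Throughout, take a minimizing sequence $q_n\in\mathcal B_M$ with $E(q_n)\downarrow E_c$. The first point is that $E_c>0$. Lemma~\ref{lem:ceiling-coercivity} gives $E(q)\geq c_M\|\nabla q(0)\|_{L_x^2}^2\geq c_M\|\nabla W\|_{L_x^2}^2$ for every $q\in\mathcal B_M$, so $E_c\geq c_M\|\nabla W\|_{L_x^2}^2>0$; also $E_c<E(W)$ trivially. Since each $q_n$ stays in the interior of the upper cone, Lemma~\ref{lem:subthreshold-branch} and \eqref{Ku} give the uniform gap $K(q_n(t))\leq -\frac{2}{d-2}\bigl(\|\nabla q_n(t)\|^2_{L_x^2}-dE(q_n)\bigr)\leq-\frac{2}{d-2}\bigl(\|\nabla W\|_{L_x^2}^2-dE(q_n)\bigr)$. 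This only degenerates if $E(q_n)\to E(W)$, and it does not, since $E_c<E(W)$. This gap will give (iii) once (i) holds, because $K(v)=2^*E_c-\frac2{d-2}\|\nabla v\|^2\leq-2^*(E(W)-E_c)$.

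Next I will choose times $\tau_n\in[0,T_{q_n,+})$ so that the translated sequences are forward-nonscattering "on both sides." Lemma~\ref{lem:threshold-branch}-type reasoning, namely $\|q_n(t)\|_{L^{2^*}}^{2^*}>\|W\|_{L^{2^*}}^{2^*}$ in the upper cone, shows that $q_n$ cannot scatter forward. If $T_{q_n,+}<\infty$, it blows up anyway, so $S_{[0,T_{q_n,+})}(q_n)=\infty$. Following Killip--Vi\c{s}an, I pick $\tau_n$ so that $S_{[0,\tau_n]}(q_n)\to\infty$ and $S_{[\tau_n,T_{q_n,+})}(q_n)\to\infty$, for instance by choosing $\tau_n$ with $S_{[0,\tau_n]}=n$. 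I then apply Lemma~\ref{lem:linear-profiles} to $u_n:=q_n(\tau_n)$, which is bounded by $M$. Lemma~\ref{lem:ceiling-coercivity} then gives $E(U^j)\geq c_M\|\nabla\phi^j\|^2$, the energy decoupling \eqref{eq:energy-decoupling}, and at most one profile with kinetic energy above $\|\nabla W\|$.

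The heart of the argument, and the main obstacle, is showing there is exactly one profile and no remainder. Every profile with $\|\nabla\phi^j\|\leq\|\nabla W\|$ and $E(U^j)<E(W)$ lies in the closed lower region. By Theorem~\ref{thm:KM}(i), together with the fact that equality of kinetic energies with subthreshold energy is impossible, $U^j$ is global and scattering with finite $S_\R(U^j)$. If there were no supercritical profile, or if the supercritical profile had energy strictly less than $E_c$ while still obeying the ceiling, stability (Lemma~\ref{pertu}) would bound $S(u_n)$ on both sides, contradicting the choice of $\tau_n$. The delicate point is to argue that a single supercritical profile $U^1$ with $E(U^1)<E_c$ must fail to belong to $\mathcal B_M$ only through violating the ceiling, and that this is impossible. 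Indeed, if $U^1$ had $\sup_t\|\nabla U^1\|>M$, then $\|\nabla q_n\|$ would exceed $M$ at some later time by decoupling of kinetic energy along the nonlinear approximation, which holds if the other profiles scatter; this contradicts $q_n\in\mathcal B_M$. I would first try to get this from the nonlinear profile decomposition $q_n(\tau_n+t)\approx\sum_jU^j_n(t)+e^{it\Delta}w_n^J$, valid as long as $U^1$ has finite scattering size on the relevant interval, with kinetic energy decoupling of orthogonal profiles. Hence $E(U^1)\geq E_c$, which forces $J^*=1$, $E(w_n^1)\to0$ and thus $w_n^1\to0$ in $\dot H^1$ by coercivity. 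The profile time $t_n^1$ must be $0$, since $t_n^1\to\pm\infty$ would make one side scatter. So $g_n^{-1}q_n(\tau_n)\to\phi^1$ in $\dot H^1$.

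Let $v$ be the solution with $v(0)=\phi^1$. Then $E(v)=E_c$ and $\|\nabla v(0)\|\geq\|\nabla W\|$, and strict inequality holds because energy is below $E(W)$. By stability, the ceiling $\sup_t\|\nabla v(t)\|\leq M$ holds on the whole lifespan, in both directions, by applying the argument at times in $[0,\tau_n]$ as well. Applying the same compactness argument to $v(t_k)$ for arbitrary $t_k$ shows that $v$ is almost periodic modulo $G$. Lemma~\ref{lem:finite-ap} then gives that $v$ is global. Finally, a standard rescaling/limiting argument, which passes to a limit along times where $N(t)$ is minimal and then rescales, allows one to arrange $N(t)\geq1$ on $\R$, giving (ii). Property (iii) follows from the gap computed in the first paragraph.
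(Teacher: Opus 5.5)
Your proposal is correct and follows essentially the same route as the paper: a minimizing sequence evaluated at times with divergent scattering size on both sides, then the profile decomposition with the $\kappa_d$-coercivity, which leaves a single supercritical profile. That profile inherits the ceiling through kinetic-energy decoupling along the nonlinear approximation, so it lies in $\mathcal B_M$ and has energy $E_c$; almost periodicity, globality via Lemma~\ref{lem:finite-ap}, and the Killip--Vi\c{s}an normalization then follow as in the paper. The differences are cosmetic. You exclude $t_n^1\to\pm\infty$ by two-sided nonscattering rather than by the paper's observation that $E(U^1)=\frac12\|\nabla\phi^1\|_{L_x^2}^2>E(W)$ in that case. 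Your remark that stability would bound $S(u_n)$ when the supercritical profile has energy below $E_c$ is a slip, since an upper-cone profile never scatters. The contradiction there is directly with the minimality of $E_c$, which is what your subsequent argument actually uses.
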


\begin{proof}
\begin{enumerate}
\item[(i)--(ii)] We first extract a nonlinear profile at the minimal energy
and establish compactness of its orbit. We then construct $v$ as a limit
of suitably rescaled time translates of this profile.
The variational dichotomy 
gives, for every
$q\in\mathcal B_M$ and $t\in I_q$,
 \begin{equation}\label{eq:supercritical-potential-gap}
 \|q(t)\|_{L_x^{2^*}}^{2^*}
 >\|W\|_{L_x^{2^*}}^{2^*}.
 \end{equation}
Thus no $q\in\mathcal B_M$ can scatter in any time direction. Consequently, every $q$ in $\mathcal B_M$ has infinite scattering size in both time directions.

Since $\mathcal{B}_M$ is nonempty,  $E_c<E(W)$ by \eqref{eq:critical-energy}. {Applying Lemma~\ref{lem:ceiling-coercivity} to $q(0)$ for any $q\in\mathcal B_M$, we have
\[
 E(q)=E(q(0))\geq c_M\|\nabla q(0)\|_{L_x^2}^2
 > c_M\|\nabla W\|_{L_x^2}^2>0.
\]
Taking the infimum on $B_M$ gives 
$$
E_c\ge c_M\|\nabla W\|_{L_x^2}^2>0.
$$}
We choose $q_n\in\mathcal B_M$ such that $E(q_n)\to E_c$. {Since every
$q_n$ has infinite forward scattering size, we may choose
$s_n\in(0,T_{q_n,+})$ such that}
\begin{equation}\label{eq:central-times}
{ S_{[0,s_n]}(q_n)\geq n,
 \qquad
 S_{[s_n,T_{q_n,+})}(q_n)=\infty.
}\end{equation}
Apply Lemma~\ref{lem:linear-profiles} to $q_n(s_n)$ and let $U^j$ be
the associated nonlinear profiles. Lemma~\ref{lem:ceiling-coercivity}
gives
\begin{equation}\label{eq:critical-energy-sum}
 E_c=\sum_{j=1}^J E(U^j)+E(w_n^J)+o_n(1), \quad E(U^j)\geq0,\;\liminf_{n\to\infty}E(w_n^J)\geq0.
\end{equation}

Set
\[
 \widetilde q_n^J(t):=\sum_{j=1}^J U_n^j(t)+e^{it\Delta}w_n^J.
\]
Suppose first that
\begin{equation}\label{eq:all-profiles-subcritical}
 \|\nabla\phi^j\|_{L_x^2}\leq\|\nabla W\|_{L_x^2}
 \qquad(\forall j\geq1).
\end{equation}
Since $0\le E(U^j)\leq E_c<E(W)$, the inequalities in
\eqref{eq:all-profiles-subcritical} are strict. Theorem~\ref{thm:KM}(i)
shows that every $U^j$ is global and scatters in both time directions. Similar to \cite{kenig-merle1}, 
energy decoupling, small-data theory, parameter orthogonality, and
\eqref{eq:linear-remainder-small} give
\begin{align*}
 &\sup_{J\geq J_0}\limsup_{n\to\infty}
 \left[S_{\R}(\widetilde q_n^J)
 +\|\widetilde q_n^J\|_{\dot S^1(\R)}\right]<\infty,\\
 &\lim_{J\to\infty}\limsup_{n\to\infty}
 \|\widetilde q_n^J(0)-q_n(s_n)\|_{\dot H^1}=0,\\
 &\lim_{J\to\infty}\limsup_{n\to\infty}
 \left\|(i\partial_t+\Delta)\widetilde q_n^J
 +|\widetilde q_n^J|^{\frac4{d-2}}\widetilde q_n^J
 \right\|_{\dot N^1(\R)}=0.
\end{align*}
 Together with Lemma~\ref{pertu}, 
 these yield
\[
 \sup_n S_{I_{q_n}}(q_n)<\infty,
\]
contradicting \eqref{eq:central-times}.  Hence, after relabeling, we may assume
\begin{equation}\label{eq:supercritical-main-profile}
 \|\nabla\phi^1\|_{L_x^2}>\|\nabla W\|_{L_x^2}.
\end{equation}

By \eqref{eq:at-most-one-supercritical-profile}, $\phi^1$ is the only
profile satisfying \eqref{eq:supercritical-main-profile}. For $j\geq2$,
we have $\|\nabla\phi^j\|_{L_x^2}<\|\nabla W\|_{L_x^2}$ and $U^j$ is global and scatters in both time directions as before.
In addition, $t_n^1\equiv0$. Otherwise, $t_n^1\to\pm\infty$ leads to
\[
 E(U^1)=\frac12\|\nabla\phi^1\|_{L_x^2}^2
 >\frac12\|\nabla W\|_{L_x^2}^2>E(W),
\]
contradicting \eqref{eq:critical-energy-sum}.

Let {$J_1=[a,b]\subset I_{U^1}$} be a compact interval containing zero and set
$J_{1,n}:=(\lambda_n^1)^2J_1$. The profile $U^1$ has finite scattering
norm on $J_1$, while all the other nonlinear profiles scatter on
$\R$. {We first claim that}
\[
{ s_n+(\lambda_n^1)^2a\geq0
 \qquad\text{for all sufficiently large }n.
}\]
{Otherwise, after passing to a subsequence, $a<0$ and}
\[
{ \left[-\frac{s_n}{(\lambda_n^1)^2},0\right]\subset[a,0].
}\]
{Stability on the translated interval $[-s_n,0]$ then gives a uniform
bound for $S_{[0,s_n]}(q_n)$, contradicting \eqref{eq:central-times}.
Stability on $J_{1,n}$ and the maximality of $I_{q_n}$ also give
$s_n+(\lambda_n^1)^2b<T_{q_n,+}$ for all sufficiently large $n$.
Thus $s_n+J_{1,n}\subset[0,T_{q_n,+})$ for all sufficiently large $n$, and}
\begin{equation}\label{eq:local-nonlinear-profile-approximation}
 \lim_{J\to\infty}\limsup_{n\to\infty}
 \|q_n(s_n+\cdot)-\widetilde q_n^J\|_{
 L_t^\infty\dot H_x^1(J_{1,n}\times\R^d)}=0.
\end{equation}
For each $t\in J_1$, kinetic-energy decoupling for the nonlinear
profiles (cf. \cite[Lemma 3.3]{KV1}) and \eqref{eq:local-nonlinear-profile-approximation} imply that
\begin{equation}\label{eq:evolved-kinetic-decoupling}
 \|\nabla U^1(t)\|_{L_x^2}
 \leq\liminf_{n\to\infty}
 \|\nabla q_n(s_n+(\lambda_n^1)^2t)\|_{L_x^2}
 \leq M.
\end{equation}
Since $J_1\Subset I_{U^1}$ is arbitrary, the same bound holds throughout
$I_{U^1}$. The variational dichotomy, $U^1(0)=\phi^1$, \eqref{eq:supercritical-main-profile} and the definition of $E_c$ give 
$$U^1\in\mathcal B_M, \quad E(U^1)\geq E_c.$$
Together with \eqref{eq:critical-energy-sum} and $t_n^1=0$, this yields
\[
E(U^1)=E(\phi^1)=E_c,\quad E(U^j)=0\;(j\geq2),\quad
 \lim_{J\to\infty}\limsup_{n\to\infty}E(w_n^J)=0.
\]
From coercivity \eqref{eq:ceiling-coercivity}, the original profile decomposition reduces to
\begin{equation}\label{eq:one-profile-reduction}
 q_n(s_n)=g_n^1\phi^1+o_n(1)
 \quad\hbox{in }\dot H^1(\R^d).
\end{equation}
Since $U^1\in\mathcal B_M$ and \eqref{eq:supercritical-potential-gap}, it blows up in both time directions.

We next use the minimality of $E_c$ to prove that $U^1$ is almost
periodic modulo {symmetry group $G$}. Let $\{\tau_n\}\subset I_{U^1}$ be arbitrary.
Since $U^1\in\mathcal B_M$ and $E(U^1)=E_c$, we have
\[
 S_{(\inf I_{U^1},\tau_n]}(U^1)
 =S_{[\tau_n,\sup I_{U^1})}(U^1)=\infty
 \qquad\text{for every }n.
\]
{Applying the same profile-decomposition argument to the sequence \(\{U^1(\tau_n)\}\), we obtain a nonlinear profile \(V\) with initial data in the interior of the upper cone, satisfying
\[
E(V)\leq E(U^1)=E_c<E(W).
\]
As in the preceding argument, the associated time-shift parameters can be taken to be zero.}
As in
\eqref{eq:evolved-kinetic-decoupling}, stability and kinetic-energy
decoupling give the bound $\|\nabla V(t)\|_{L_x^2}\leq M$ throughout its
maximal lifespan, so $V\in\mathcal B_M$. By the definition of $E_c$,
\[
 E_c\leq E(V)\leq E(U^1)=E_c.
\]
From energy decoupling, and the nonnegativity of the other profile energies
and the limiting remainder energy,  all other profiles have zero energy, and the remainder
energy tends to zero. The coercivity in Lemma~\ref{lem:ceiling-coercivity}
then gives, after passing to a subsequence, group elements $g_n\in G$
such that
\[
 g_nU^1(\tau_n)\to V(0)
 \quad\hbox{strongly in }\dot H^1(\R^d).
\]
Since $\{\tau_n\}$ was arbitrary, the orbit of $U^1$ is {precompact
modulo $G$}. Thus $U^1$ is almost periodic, and
Lemma~\ref{lem:finite-ap} gives $I_{U^1}=\R$.

We apply the normalization procedure from \cite{KV1}, following
\cite[Proof of Proposition~4.1]{MMMZ25}. This construction selects times
$r_n\in\R$ and group elements
$\widetilde g_n=g_{\widetilde\lambda_n,\widetilde x_n}\in G$ such that
\[
 \widetilde g_nU^1(r_n)\to v(0)
 \quad\hbox{strongly in }\dot H^1(\R^d),
\]
where $v:I_v\times\R^d\to\C$ is a nonzero maximal-lifespan
almost periodic solution. Lemma~\ref{lem:finite-ap} gives $I_v=\R$,
excluding the finite-lifespan alternative in this construction. In the
remaining alternatives, the frequency scale of the limiting solution
is bounded away from zero. Including a fixed scaling in
$\widetilde g_n$, we therefore obtain
\[
 N(t)\geq1\qquad(t\in\R).
\]
Moreover, stability gives, for every compact interval $J\subset\R$,
\[
 \lim_{n\to\infty}\sup_{t\in J}
 \bigl\|\widetilde g_nU^1(r_n+t/\widetilde\lambda_n^2)-v(t)
 \bigr\|_{\dot H^1}=0.
\]
Energy conservation and the invariance of the energy and the
$\dot H^1$ norm under $G$ therefore imply
\[
 E(v)=E_c,\qquad
 \|\nabla W\|_{L_x^2}\leq\|\nabla v(t)\|_{L_x^2}\leq M
 \qquad(t\in\R).
\]
Equality in the first kinetic inequality would imply $E(v)\geq E(W)$
by the sharp Sobolev inequality, contradicting $E_c<E(W)$.
Thus $v\in\mathcal B_M$, and items (i) and (ii) follow.

\item[(iii)] Finally, the non-scattering solution $v$ remains in $\mathcal B_M$.
Hence $\|\nabla v(t)\|_{L_x^2}>\|\nabla W\|_{L_x^2}$ for all $t$.  Since
$E(v)=E_c<E(W)$, the sharp Sobolev inequality and the usual energy
trapping argument give a constant $c_0=c_0(E_c,W)>0$ such that
\[
 K(v(t))\leq-c_0\qquad(t\in\R).
\]
\end{enumerate}
This proves the proposition.
\end{proof}

\begin{proposition}[Rigidity]\label{prop:ap-rigidity}
Let $v$ be the solution obtained in
Proposition~\ref{prop:critical-element}.  Then no such solution exists.
\end{proposition}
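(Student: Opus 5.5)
We argue by contradiction with a truncated virial argument centred at the spatial centre of $v$. The two inputs from Proposition~\ref{prop:critical-element} are the uniform negative gap \eqref{eq:critical-K-gap}, $K(v(t))\le -c_0$, and almost periodicity with $N(t)\geq 1$. The gap makes the localized virial derivative uniformly negative, while compactness keeps the truncated virial $M_R$ bounded by $R^2$; integrating over a long interval then gives the contradiction. The delicate point is that $v$ need not lie in $L^2$ ($d\ge5$, but $v$ is only in $\dot H^1$). We therefore work only with $M_R$ and the bound \eqref{eq:virial-M-bound}, $|M_R(v)|\lesssim R^2\|\nabla v\|_{L^2}^2\lesssim R^2M^2$, and never with $V_R$.

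\textbf{Step 1: control of the spatial centre.} We first show that $x(t)$ moves sublinearly, $|x(t)-x(0)|=o(t)$ as $t\to\infty$. Since $v$ is not in $L^2$, the standard momentum argument is unavailable. Instead we use $N(t)\ge1$ together with a localized momentum or center-of-energy argument as in \cite{KV1,MMMZ25}: we would prove $|x(t)-x(0)|\le \eta t$ for $t\geq T(\eta)$ via a truncated Galilean/momentum estimate. If the momentum of $v$ is nonzero, a Galilean boost cannot be used directly in the energy-critical setting. We expect the standard fact that almost periodic solutions with $N\ge1$ have zero momentum (from \cite{KV1}, via the Galilean energy reduction and minimality of $E_c$ within $\mathcal B_M$) to apply here. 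Indeed, a boost reduces the kinetic energy, which would preserve membership in $\mathcal B_M$ while strictly lowering the energy.

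\textbf{Step 2: localized virial.} We apply Lemma~\ref{lem:localized-virial} with $\varphi_R$ recentred at $x(0)$, obtaining
\[
\tfrac{d}{dt}M_R(v(t))=8K(v(t))+F_R(v(t))\le -8c_0+F_R(v(t)).
\]
By \eqref{eq:virial-tail-L2}, $F_R$ is bounded by exterior $\dot H^1$ and $L^{2^*}$ tails on $\{|x-x(0)|\ge R\}$; this is exactly where \eqref{eq:virial-tail-L2} is needed instead of \eqref{eq:virial-tail}, because it avoids the mass term. For $t\in[0,T]$ we have $|x(t)-x(0)|\le \eta T+C$ by Step 1, and $C(\eta')/N(t)\le C(\eta')$ since $N\ge1$. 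Choosing $R=2\eta T+C(\eta')+C$ therefore makes $|F_R(v(t))|\lesssim \eta'+(\eta')^{(d-2)/d}\le 4c_0$ for all $t\in[0,T]$.

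\textbf{Step 3: conclusion.} Integrating over $[0,T]$ gives
\[
4c_0T\le |M_R(v(T))|+|M_R(v(0))|\lesssim R^2M^2\lesssim (\eta T+C_{\eta'})^2M^2.
\]
We choose $\eta'$ first, then $\eta$ small. This alone is not sufficient, because the right-hand side is quadratic in $T$ while the left is linear. We fix it by taking $\eta$ small and $T$ moderately large: if $x(t)$ is bounded, the right-hand side is $O(1)$ and we are done. In general we need the sharper bound $|x(t)-x(0)|=o(\sqrt t)$, or we must replace $M_R$ by a Morawetz-type quantity. The standard remedy from \cite{KV1} is to first reduce to the case $\sup_t|x(t)|\lesssim_u 1$ after zero momentum, since for $N\ge1$ one gets $|x(t)|=o(t)$, and then to rerun the argument using $\int_0^T N(t)^{-2}$-weighted bounds. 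We expect Step 1, obtaining adequate control of $x(t)$ without finite mass, to be the main obstacle. What we would try first is to show that $x(t)$ is bounded for the $\dot H^1$ almost periodic solution with $N\geq1$, using the zero-momentum property together with the flux estimate of Lemma~\ref{lem:Hardy-flux} applied to the recentred Hardy mass.
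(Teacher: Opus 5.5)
\textbf{There is a genuine gap.} It is the one you flag yourself in Step 3, and the remedies you list do not close it. With only \eqref{eq:virial-M-bound}, the boundary term is $O(R^2)$. Since $R$ must dominate $|x(t)|$ on $[0,T]$, you would need $x(t)$ bounded or $x(t)=o(\sqrt t)$. Neither is available for a general almost periodic solution, and neither is needed. Step 1 is also inconsistent with your own premise: both the zero-momentum fact you quote from \cite{KV1} and the Galilean argument require finite mass. The boost is an exact symmetry of \eqref{1.1}. The only obstruction to using it in $\dot H^1$ is that $\nabla(e^{ix\cdot\xi_0}v)=e^{ix\cdot\xi_0}(i\xi_0v+\nabla v)$ needs $v\in L^2$; energy-criticality plays no role.

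\textbf{The missing idea.} Your premise that $v$ need not lie in $L^2$ is false. Because $d\ge5$ and $N(t)\ge1$, the negative-regularity theorem of \cite[Sections 6 and 7]{KV1}, adapted in \cite[Appendix~A]{MMMZ25}, gives $v\in L_t^\infty\dot H_x^{-\varepsilon_1}$ for some $\varepsilon_1>0$. Interpolating with $\dot H^1$ gives $v\in L_t^\infty L_x^2$, and after excluding the cascade scenario one may take $N(t)\equiv1$. This is \eqref{Neone}, and it repairs every step of your argument.
\begin{enumerate}
\item[(a)] The bound on $M_R$ becomes linear in $R$: $|M_R(v(t))|\lesssim R\|\nabla v\|_{L_x^2}\|v\|_{L_x^2}\le C_vR$.
\item[(b)] Mass and momentum are finite and conserved. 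The boost with $\xi_0=-P(v)/(2M(v))$ lowers the kinetic energy by $|P|^2/(4M)$ and the energy by $|P|^2/(8M)$, and it preserves non-scattering. By Theorem~\ref{thm:KM}(i) the boosted solution lies in $\mathcal B_M$ with energy below $E_c$, so $P(v)=0$.
\item[(c)] Zero momentum together with $L^2$ and $\dot H^1$ tightness around $x(t)$ gives $x(t)=o(t)$, which suffices precisely because of (a).
\end{enumerate}

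\textbf{Closing the argument.} Take $R=\varepsilon T$ and integrate \eqref{eq:localized-virial} using \eqref{eq:critical-K-gap}. This gives $8c_0T\le C_v\varepsilon T+\int_0^T|F_R(v(t))|\,dt$. Choose $T_{\varepsilon,\eta}$ so that $\varepsilon t-|x(t)|\ge C(\eta)$ for $t\ge T_{\varepsilon,\eta}$, and split the time integral there. By \eqref{eq:virial-tail-L2}, the integral is at most $C_vT_{\varepsilon,\eta}+C(\eta+\eta^{(d-2)/d})T$. Dividing by $T$ and letting $T\to\infty$ gives $8c_0\lesssim\varepsilon+\eta+\eta^{(d-2)/d}$, which is a contradiction once $\varepsilon$ and $\eta$ are small. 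Your suggested route through boundedness of $x(t)$ and a recentred Hardy flux is therefore not the mechanism that works here. Finite mass, obtained from negative regularity, is what converts the quadratic boundary term into a linear one.
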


\begin{proof}
Let $M$, $E_c$, and $\mathcal B_M$ be as in
Proposition~\ref{prop:critical-element}, and set
$c_0:=-\sup_{t\in\R}K(v(t))>0$. Following \cite[Appendix~A]{MMMZ25} and
\cite[Sections~6 and 7]{KV1}, for some $\varepsilon_1>0$ we have
\begin{align}\label{Neone}
 v\in L_t^\infty\dot H_x^{-\varepsilon_1}
 \cap L_t^\infty L_x^2,
 \qquad N(t)\equiv1.
\end{align}
Define the mass and momentum of $v$ by
\[
 M(v):=\int_{\R^d}|v(t,x)|^2\,dx,
 \qquad
 P(v):=2\Im\int_{\R^d}\overline{v(t,x)}\nabla v(t,x)\,dx.
\]
Both quantities are finite and conserved.  In addition, $P(v)=0$.
If not, let
\[
 \xi_0:=-\frac{P(v)}{2M(v)},\qquad
 \widetilde v(t,x):=e^{ix\cdot\xi_0-it|\xi_0|^2}
 v(t,x-2t\xi_0).
\]
Then
 \begin{equation}\label{eq:critical-galilean}
 \|\nabla\widetilde v(t)\|_{L_x^2}^2
 =\|\nabla v(t)\|_{L_x^2}^2-\frac{|P(v)|^2}{4M(v)},
 \quad
 E(\widetilde v)=E(v)-\frac{|P(v)|^2}{8M(v)}<E_c.
\end{equation}
As the Galilean transform preserves the scattering size,
$\widetilde v$ does not scatter in either direction.  By
Theorem~\ref{thm:KM}(i), it remains {in the interior of the upper cone}.
Equation \eqref{eq:critical-galilean} then gives
$\widetilde v\in\mathcal B_M$, contradicting the definition of $E_c$.
Thus $P(v)=0$.

From \eqref{Neone} and almost periodicity, for every
$\eta>0$, there exists a constant $C(\eta)>0$ such that
\begin{equation}\label{eq:critical-L2-compactness}
 \sup_{t\in\R}\int_{|x-x(t)|>C(\eta)}
 \bigl(|\nabla v(t,x)|^2+|v(t,x)|^2\bigr)\,dx<\eta.
\end{equation}
Together with the zero momentum $P(v)=0$, this yields\footnote{The proof is similar to \cite{DHR08,KV1} and thus omitted.}
\begin{equation}\label{eq:critical-center-sublinear}
 x(t)=o(t)\qquad(t\to\infty).
\end{equation}

Note that $|M_R(f)|\lesssim R\|\nabla f\|_{L_x^2}\|f\|_{L_x^2}$, for some $C_v>0$, we have
\[
 \sup_{t\in\R}|M_R(v(t))|\le C_v R.
\]
  Recall that $K(v(t))\le -c_0$.
Integrating \eqref{eq:localized-virial} on $[0,T]$ with $T>0$ gives
\begin{equation}\label{eq:highd-integrated-virial}
 8c_0T
 \leq C_v R+\int_0^T|F_R(v(t))|\,dt.
\end{equation}
Let $\varepsilon,\eta>0$ and choose $R=\varepsilon T$. By \eqref{eq:critical-center-sublinear}, there exists
$T_{\varepsilon,\eta}$ such that
\[
 \varepsilon t-|x(t)|\geq C(\eta),
 \quad \forall t\geq T_{\varepsilon,\eta}.
\]
Together with almost periodicity and \eqref{eq:virial-tail-L2}, this yields
\[
\int_0^T|F_R(v(t))|\,dt=\int_0^{T_{\varepsilon,\eta}}+\int_ {T_{\varepsilon,\eta}}^{T} \; |F_R(v(t))|\,dt
 \lesssim_v T_{\varepsilon,\eta}
 +\left(\eta+\eta^{\frac{d-2}{d}}\right)T.
\]
Inserting this into \eqref{eq:highd-integrated-virial}, dividing by
$T$, and letting $T\to\infty$ gives
\[
 8c_0\lesssim_v\varepsilon+\eta+\eta^{\frac{d-2}{d}}.
\]
We get a contradiction by taking $\varepsilon,\eta$ sufficiently small. This completes the proof of Proposition \ref{prop:ap-rigidity}.
\end{proof}

We are now ready to prove Theorem~\ref{thm:general-subthreshold}(ii).
\begin{proof}[{\bf Proof of Theorem~\ref{thm:general-subthreshold}(ii)}]
{By time reversal, it suffices to prove the assertion at $T_+$.
Suppose otherwise that}
\[
{ \limsup_{t\uparrow T_+}\|\nabla u(t)\|_{L_x^2}
 <\kappa_d\|\nabla W\|_{L_x^2}.
}\]
{Choose $t_0\in I$ and $M$ such that}
\[
{ \sup_{t\in[t_0,T_+)}\|\nabla u(t)\|_{L_x^2}\leq M
 <\kappa_d\|\nabla W\|_{L_x^2}.
}\]
{The time-translated solution $q(t):=u(t_0+t)$ belongs to
$\mathcal B_M$. Proposition~\ref{prop:critical-element} therefore}
produces the solution $v$, contradicting
Proposition~\ref{prop:ap-rigidity}. {Applying the forward assertion to
$\overline{u(-t)}$ gives the assertion at $T_-$. This completes the proof.}
\end{proof}

\subsection{Compactness on the upper threshold sheet}\label{subsec:Thmthreshold}
In this subsection, we prove Theorem~\ref{thm:general-threshold}(ii). We first establish sequential convergence to the ground-state manifold.
\begin{proposition}[Sequential convergence]\label{prop:sequential}
Let $u$ be a solution to \eqref{1.1} satisfying \eqref{hyp:bounded} and \eqref{hyp:ceiling}. Then $T_+=\infty$ and there exists a sequence $t_n\to\infty$ such that
\begin{equation}\label{eq:sequential}
 \delta(u(t_n))\to0.
\end{equation}
\end{proposition}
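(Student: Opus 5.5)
We prove Proposition~\ref{prop:sequential} by a compactness argument modeled on the proof of Proposition~\ref{prop:critical-element}, followed by a virial argument in the spirit of Proposition~\ref{prop:ap-rigidity}. Throughout, fix $M$ as in \eqref{hyp:ceiling}.

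\textbf{Step 1: compactness of the forward orbit.} We first show that the forward orbit $\{u(t):t\in[0,T_+)\}$ is precompact modulo $G$. Let $\tau_n\in[0,T_+)$ be arbitrary and apply Lemma~\ref{lem:linear-profiles} to $u(\tau_n)$. The relevant inputs are:
\begin{enumerate}
\item[(a)] $E(u(\tau_n))=E(W)$.
\item[(b)] $\|\nabla u(\tau_n)\|_{L^2_x}\le M$.
\item[(c)] Lemma~\ref{lem:ceiling-coercivity} gives nonnegative profile energies, nonnegative limiting remainder energy, and at most one profile with $\|\nabla\phi^j\|_{L^2_x}>\|\nabla W\|_{L^2_x}$.
\end{enumerate}

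Any profile with energy strictly below $E(W)$ is handled by its kinetic energy. If it lies below $\|\nabla W\|_{L^2_x}$, it scatters by Theorem~\ref{thm:KM}(i). If it lies above, it lies in the interior of the upper cone. As in \eqref{eq:evolved-kinetic-decoupling}, its kinetic energy stays $\le M<\kappa_d\|\nabla W\|_{L^2_x}$ on its forward lifespan, which contradicts Theorem~\ref{thm:general-subthreshold}(ii). The same applies to the backward direction if we use the limsup at both ends; otherwise we only need the forward statement, since we evolve forward from $\tau_n$.

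Hence, if $u(\tau_n)$ is not compact modulo $G$, every nonlinear profile has energy $<E(W)$ and scatters. Note that a profile with $t_n^j\to\pm\infty$ has energy $\tfrac12\|\nabla\phi^j\|^2\le\tfrac12 M^2$, and it must be checked below $\|\nabla W\|$. Stability (Lemma~\ref{pertu}) then bounds $S_{[\tau_n,T_+)}(u)$ uniformly. This contradicts Lemma~\ref{lem:threshold-branch}: $u$ cannot scatter forward, and if $T_+<\infty$ blow-up is forced.

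Otherwise there is a single profile carrying all the energy $E(W)$. By coercivity \eqref{eq:ceiling-coercivity}, the remainder and the other profiles vanish in $\dot H^1$, and $t_n^1\equiv0$ because a $\tfrac12\|\nabla\phi^1\|^2>E(W)$ contradiction rules out the other cases. Thus $g_n u(\tau_n)$ converges in $\dot H^1$, and $u$ is almost periodic on $[0,T_+)$.

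\textbf{Step 2: global existence.} The argument of Lemma~\ref{lem:finite-ap} applies to the forward half-line and gives $T_+=\infty$. Its ingredients are: if $T_+<\infty$, then $N(t)\to\infty$, local mass vanishes, and $u\equiv0$, which is absurd.

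\textbf{Step 3: $\delta(u(t_n))\to0$ by contradiction.} Suppose instead that $\delta(u(t))\ge c>0$ for all $t\ge0$. By \eqref{eq:Kdelta}, $K(u(t))\le-\frac{2c}{d-2}$. We now want to repeat the rigidity of Proposition~\ref{prop:ap-rigidity}, and here lies the main obstacle.

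We need $L^2$ control and $N(t)\equiv1$, or at least $N(t)\gtrsim1$ together with $x(t)=o(t)$. Only forward almost periodicity is available, and $u$ may have infinite mass. In dimensions $d\ge5$, I would first try to replace the orbit by a normalized limit as in \cite{KV1,MMMZ25}. One selects $r_n$ so that $g_n u(r_n)$ converges to a global almost-periodic threshold solution $\tilde u$ that still satisfies \eqref{hyp:ceiling} and has $\inf_t\delta(\tilde u(t))\ge c$. By lower semicontinuity, such a solution is almost periodic on all of $\mathbb R$ with $N\ge1$ (frequency cascade and soliton-like scenarios). The negative regularity argument then gives $\tilde u\in L^2$. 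The Galilean argument gives zero momentum: a nonzero momentum would lower the energy below $E(W)$ while keeping the solution in the upper interior under the ceiling, contradicting Theorem~\ref{thm:general-subthreshold}(ii). Hence $x(t)=o(t)$.

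The truncated virial \eqref{eq:localized-virial} with $R=\varepsilon T$, the bound $|M_R|\lesssim R$, and \eqref{eq:virial-tail-L2} then give
\[8\cdot\tfrac{2c}{d-2}\,T\le C R+o(T),\]
which is a contradiction.
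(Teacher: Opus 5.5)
Your proposal is correct and follows the paper's route. Profile decomposition with Lemma~\ref{lem:ceiling-coercivity}, Theorem~\ref{thm:KM}(i) and Theorem~\ref{thm:general-subthreshold}(ii) gives forward almost periodicity, and Lemma~\ref{lem:finite-ap} gives $T_+=\infty$; a uniform gap $\delta\ge c$ is then excluded via a normalized almost-periodic limit with $N\equiv1$, negative regularity, the Galilean zero-momentum argument, $x(t)=o(t)$, and the truncated virial with $R=\varepsilon T$. The differences are minor: using only forward kinetic control to exclude the supercritical subthreshold profile suffices, and the paper does your compressed Step 3 in two stages (first a limit $v$ along $\tau_n\to\infty$, then the KV1 normalization). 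In the first stage it is $S_{[0,\tau_n]}(u)\to\infty$ plus stability, not ``lower semicontinuity'', that makes $v$ almost periodic on all of $\mathbb{R}$ with $\delta(v)\ge c$ also at negative times.
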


We first establish global existence and compactness of the forward orbit and then prove Proposition~\ref{prop:sequential}.


\begin{proposition}[Global existence and compactness]\label{prop:forward-compactness}
Let $u$ be a solution to \eqref{1.1} satisfying \eqref{hyp:bounded} and \eqref{hyp:ceiling}.
Then $T_+=\infty$, and $u$ is almost periodic {modulo $G$}
on $[0,\infty)$.
\end{proposition}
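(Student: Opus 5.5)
The plan is to run a concentration-compactness argument on the forward orbit, using Theorem~\ref{thm:general-subthreshold}(ii) and Theorem~\ref{thm:KM}(i) to rule out profiles of energy below $E(W)$. By Lemma~\ref{lem:threshold-branch}, $u$ does not scatter forward: if $T_+=\infty$ then $S_{[0,\infty)}(u)=\infty$, and if $T_+<\infty$ the same holds by Theorem~\ref{T:local}.

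\textbf{Key step: a one-profile reduction.} Fix any sequence $\tau_n\in[0,T_+)$. I will show that, after passing to a subsequence, there are $g_n\in G$ with $g_n u(\tau_n)$ converging in $\dot H^1$. Apply Lemma~\ref{lem:linear-profiles} to $u(\tau_n)$. Since $\limsup_n\|\nabla u(\tau_n)\|_{L^2}\le M<\kappa_d\|\nabla W\|_{L^2}$, Lemma~\ref{lem:ceiling-coercivity} yields three facts:
\begin{itemize}
\item[(a)] the energy decoupling $E(W)=\sum_j E(U^j)+E(w_n^J)+o_n(1)$ holds with all terms asymptotically nonnegative;
\item[(b)] there is at most one profile with $\|\nabla\phi^j\|_{L^2}>\|\nabla W\|_{L^2}$;
\item[(c)] each profile energy satisfies $E(U^j)\ge c_M\|\nabla\phi^j\|^2_{L^2}$.
\end{itemize}

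\textbf{Case analysis on the profiles.}
\begin{itemize}
\item If every profile satisfies $\|\nabla\phi^j\|_{L^2}\le\|\nabla W\|_{L^2}$ and has $E(U^j)<E(W)$, then each $U^j$ scatters by Theorem~\ref{thm:KM}(i). Stability (Lemma~\ref{pertu}) then gives $S_{[\tau_n,T_+)}(u)<\infty$, which is a contradiction.
\item The sub-case of a profile with $\|\nabla\phi^j\|_{L^2}=\|\nabla W\|_{L^2}$ and $E(U^j)=E(W)$ is handled by the equality case of the sharp Sobolev inequality. In that case $\phi^j$ is a ground state. All other energies then vanish, so coercivity forces $u(\tau_n)\to$ a ground state modulo $G$, which is compact as desired.
\item If $\|\nabla\phi^1\|_{L^2}>\|\nabla W\|_{L^2}$ and $E(U^1)<E(W)$ (so $t_n^1\equiv0$, as in Proposition~\ref{prop:critical-element}), then $U^1$ lies in the interior of the upper cone. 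By Theorem~\ref{thm:general-subthreshold}(ii), $\limsup\|\nabla U^1(t)\|_{L^2}\ge\kappa_d\|\nabla W\|_{L^2}$ at its forward endpoint. However, as in \eqref{eq:evolved-kinetic-decoupling}, stability together with kinetic decoupling of nonlinear profiles gives $\|\nabla U^1(t)\|_{L^2}\le M$ on every compact subinterval of its forward lifespan corresponding to times in $[\tau_n,T_+)$. This is a contradiction. Forward blow-up of $U^1$ must be transferred to $u$, otherwise $u$ would have finite scattering size past $\tau_n$.
\end{itemize}
Hence $E(U^1)=E(W)$, all other profile energies are zero, and the remainder energy tends to zero. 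By coercivity (c), $u(\tau_n)=g_n^1\phi^1+o(1)$ in $\dot H^1$, which is the claimed precompactness modulo $G$.

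\textbf{Global existence.} Almost periodicity on $[0,T_+)$ with $T_+<\infty$ is excluded by the argument of Lemma~\ref{lem:finite-ap}. That proof uses only the forward endpoint: the frequency scale must blow up there, the local mass then vanishes, and mass conservation gives $u\equiv0$. This contradicts $E(u)=E(W)>0$, so $T_+=\infty$.

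\textbf{Main obstacle.} The delicate point is the third case. One must check that the profile $U^1$ really inherits the bound $\le M$ along its whole forward lifespan. This requires showing that the time window $s_n+(\lambda_n^1)^2J$ stays inside $[0,T_+)$, which is done as in the proof of Proposition~\ref{prop:critical-element}. It also requires handling the possibility $t_n^1\to\pm\infty$, which is ruled out because it would give $E(U^1)=\frac12\|\nabla\phi^1\|^2_{L^2}>E(W)$.
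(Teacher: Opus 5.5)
Your proposal is correct and follows essentially the same route as the paper: a profile decomposition of $u(\tau_n)$ under the kinetic ceiling, exclusion of the all-subcritical configuration by scattering and stability, and exclusion of a supercritical profile with $E(U^1)<E(W)$ via Theorem~\ref{thm:general-subthreshold}(ii), followed by the one-profile reduction with $t_n^1\equiv0$ and Lemma~\ref{lem:finite-ap} for $T_+=\infty$. Your small deviations are valid and, if anything, slightly tidier than the paper's presentation: you use only the forward window $\tau_n+(\lambda_n^1)^2[0,b]$ to get $\|\nabla U^1\|_{L_x^2}\le M$, and you observe that the proof of Lemma~\ref{lem:finite-ap} needs almost periodicity only near the forward endpoint.
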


\begin{proof}
Let $\{t_n\}\subset[0,T_+)$ be arbitrary. If, after passing to a
subsequence, $t_n\to t_*\in[0,T_+)$, then continuity gives
$u(t_n)\to u(t_*)$ in $\dot H^1(\R^d)$. Thus we may assume that
$t_n\to T_+$. By Theorem~\ref{T:local} and
Lemma~\ref{lem:threshold-branch},
\begin{equation}\label{eq:threshold-central-times}
 S_{[0,t_n]}(u)\to\infty,
 \qquad S_{[t_n,T_+)}(u)=\infty.
\end{equation}
Apply Lemma~\ref{lem:linear-profiles} to $u(t_n)$ and let $U^j$ be
the associated nonlinear profiles. Lemma~\ref{lem:ceiling-coercivity}
gives
\begin{equation}\label{eq:threshold-energy-sum}
 E(W)=\sum_{j=1}^J E(U^j)+E(w_n^J)+o_n(1),\quad
 E(U^j)\geq0,\;\liminf_{n\to\infty}E(w_n^J)\geq0.
\end{equation}

Set
\[
 \widetilde u_n^J(t):=\sum_{j=1}^J U_n^j(t)+e^{it\Delta}w_n^J.
\]
Suppose first that $E(U^j)<E(W)$ for every $j$. If
\begin{equation}\label{eq:threshold-all-profiles-subcritical}
 \|\nabla\phi^j\|_{L_x^2}\leq\|\nabla W\|_{L_x^2}
 \qquad(\forall j\geq1),
\end{equation}
then the inequalities in
\eqref{eq:threshold-all-profiles-subcritical} are strict.
Theorem~\ref{thm:KM}(i) shows that every $U^j$ is global and scatters
in both time directions. Similar to \cite{kenig-merle1}, energy
decoupling, small-data theory, parameter orthogonality, and
\eqref{eq:linear-remainder-small} give
\begin{align*}
 &\sup_{J\geq J_0}\limsup_{n\to\infty}
 \left[S_{\R}(\widetilde u_n^J)
 +\|\widetilde u_n^J\|_{\dot S^1(\R)}\right]<\infty,\\
 &\lim_{J\to\infty}\limsup_{n\to\infty}
 \|\widetilde u_n^J(0)-u(t_n)\|_{\dot H^1}=0,\\
 &\lim_{J\to\infty}\limsup_{n\to\infty}
 \left\|(i\partial_t+\Delta)\widetilde u_n^J
 +|\widetilde u_n^J|^{\frac4{d-2}}\widetilde u_n^J
 \right\|_{\dot N^1(\R)}=0.
\end{align*}
Together with Lemma~\ref{pertu}, these yield
\[
 S_{[0,T_+)}(u)<\infty,
\]
contradicting \eqref{eq:threshold-central-times}. Hence, after
relabeling, we may assume
\begin{equation}\label{eq:threshold-supercritical-main-profile}
 \|\nabla\phi^1\|_{L_x^2}>\|\nabla W\|_{L_x^2}.
\end{equation}

By \eqref{eq:at-most-one-supercritical-profile}, $\phi^1$ is the only
profile satisfying \eqref{eq:threshold-supercritical-main-profile}.
For $j\geq2$, we have
$\|\nabla\phi^j\|_{L_x^2}<\|\nabla W\|_{L_x^2}$ and $U^j$ is global
and scatters in both time directions as before. In addition,
$t_n^1\equiv0$. Otherwise, $t_n^1\to\pm\infty$ leads to
\[
 E(U^1)=\frac12\|\nabla\phi^1\|_{L_x^2}^2
 >\frac12\|\nabla W\|_{L_x^2}^2>E(W),
\]
contradicting \eqref{eq:threshold-energy-sum}.

Let $J_1=[a,b]\subset I_{U^1}$ be a compact interval containing zero
and set $J_{1,n}:=(\lambda_n^1)^2J_1$. The profile $U^1$ has finite
scattering norm on $J_1$, while all the other nonlinear profiles
scatter on $\R$. We first show that
\[
 t_n+(\lambda_n^1)^2a\geq0
 \qquad\text{for all sufficiently large }n.
\]
If this fails, then $a<0$ and, after passing to a subsequence,
\[
 0\leq\frac{t_n}{(\lambda_n^1)^2}<-a,\qquad
 \left[-\frac{t_n}{(\lambda_n^1)^2},0\right]\subset[a,0].
\]

Stability on $[-t_n,0]$ then gives a uniform bound for
$S_{[0,t_n]}(u)$, contradicting
\eqref{eq:threshold-central-times}.
A similar argument shows that
$t_n+(\lambda_n^1)^2b<T_+$ for all sufficiently large $n$.
Hence
$t_n+J_{1,n}\subset[0,T_+)$
for all sufficiently large $n$, and the nonlinear-profile
approximation yields
\begin{equation}\label{eq:threshold-local-profile-approximation}
 \lim_{J\to\infty}\limsup_{n\to\infty}
 \|u(t_n+\cdot)-\widetilde u_n^J\|_{
 L_t^\infty\dot H_x^1(J_{1,n}\times\R^d)}=0.
\end{equation}
For each $t\in J_1$, kinetic-energy decoupling for the nonlinear
profiles (cf. \cite[Lemma 3.3]{KV1}) and
\eqref{eq:threshold-local-profile-approximation} imply that
\begin{equation}\label{eq:threshold-evolved-kinetic-decoupling}
 \|\nabla U^1(t)\|_{L_x^2}
 \leq\liminf_{n\to\infty}
 \|\nabla u(t_n+(\lambda_n^1)^2t)\|_{L_x^2}
 \leq M.
\end{equation}
Since $J_1\Subset I_{U^1}$ is arbitrary, the same bound holds
throughout $I_{U^1}$. The variational dichotomy, $U^1(0)=\phi^1$,
and \eqref{eq:threshold-supercritical-main-profile} give
\[
 E(U^1)<E(W),\qquad
 \|\nabla U^1(t)\|_{L_x^2}>\|\nabla W\|_{L_x^2}
 \qquad(t\in I_{U^1}).
\]
This contradicts Theorem~\ref{thm:general-subthreshold}(ii).

Hence, after relabeling, $E(U^1)=E(W)$. Together with
\eqref{eq:threshold-energy-sum}, this yields
\[
 E(U^j)=0\;(j\geq2),\qquad
 \lim_{J\to\infty}\limsup_{n\to\infty}E(w_n^J)=0.
\]
From coercivity \eqref{eq:ceiling-coercivity}, the original profile
decomposition reduces to
\begin{equation}\label{eq:threshold-one-profile-reduction}
 u(t_n)=g_n^1e^{it_n^1\Delta}\phi^1+o_n(1)
 \quad\hbox{in }\dot H^1(\R^d).
\end{equation}
Since $\|\nabla u(t_n)\|_{L_x^2}>\|\nabla W\|_{L_x^2}$, we have
$\|\nabla\phi^1\|_{L_x^2}\geq\|\nabla W\|_{L_x^2}$.
In addition, $t_n^1\equiv0$. Otherwise, $t_n^1\to\pm\infty$ leads to
\[
 E(U^1)=\frac12\|\nabla\phi^1\|_{L_x^2}^2
 \geq\frac12\|\nabla W\|_{L_x^2}^2>E(W),
\]
contradicting \eqref{eq:threshold-energy-sum}. Consequently,
\[
 (g_n^1)^{-1}u(t_n)\to\phi^1
 \quad\hbox{strongly in }\dot H^1(\R^d).
\]
Since $\{t_n\}$ was arbitrary, the forward orbit of $u$ is precompact
modulo symmetries. Thus $u$ is almost periodic on $[0,T_+)$, and
Lemma~\ref{lem:finite-ap} gives $T_+=\infty$. This proves the proposition.
\end{proof}

\begin{proof}[{\bf Proof of Proposition~\ref{prop:sequential}}]
Suppose to the contrary that there exists $\delta_0>0$ such that
\begin{equation}\label{eq:delta-gap}
 \delta(u(t))\geq\delta_0\qquad(t\geq0).
\end{equation}
By Proposition~\ref{prop:forward-compactness}, $u$ is almost periodic
on $[0,\infty)$.  Choose $\tau_n\to\infty$.  After applying the
compactness parameters and passing to a subsequence, there are
group elements $g_n=g_{\lambda_n,x_n}\in G$ and
$v_0\in\dot H^1(\mathbb{R}^d)$ such that
\begin{equation}\label{eq:threshold-limit-data}
 g_nu(\tau_n)\to v_0
 \quad\hbox{strongly in }\dot H^1(\R^d).
\end{equation}
Let $v:I_v\times\R^d\to\C$ be the maximal-lifespan solution with
$v(0)=v_0$. By Lemma~\ref{lem:threshold-branch},
\[
 S_{[0,\tau_n]}(u)\to\infty,\qquad
 S_{[\tau_n,\infty)}(u)=\infty.
\]
Together with stability, these imply that every compact subinterval of
$I_v$ is contained in $(-\lambda_n^2\tau_n,\infty)$ for all sufficiently
large $n$. Proposition~\ref{prop:forward-compactness} and stability then
show that $v$ is almost periodic on $I_v$. Lemma~\ref{lem:finite-ap}
gives $I_v=\R$. Strong convergence on compact time intervals also yields
$E(v)=E(W)$ and $\sup_{s\in\R}\|\nabla v(s)\|_{L_x^2}\leq M$.
For every $s\in\R$, \eqref{eq:delta-gap} gives
\begin{equation}\label{eq:gap-passes-to-limit}
 \delta(v(s))\geq\delta_0,\qquad
 K(v(s))=-\frac2{d-2}\delta(v(s))
 \leq-\frac{2\delta_0}{d-2}
 \qquad(s\in\R).
\end{equation}
The variational dichotomy gives blow-up in both time directions. As in
the proof of Proposition~\ref{prop:critical-element}, the normalization
construction of \cite{KV1} (see also
\cite[Proof of Proposition~4.1]{MMMZ25}) selects times $r_n\in\R$ and
group elements $\widetilde g_n=g_{\widetilde\lambda_n,\widetilde x_n}$
such that
\[
 \widetilde g_nv(r_n)\to w(0)
 \quad\hbox{strongly in }\dot H^1(\R^d),
\]
where $w:I_w\times\R^d\to\C$ is a nonzero maximal-lifespan
almost periodic solution. Lemma~\ref{lem:finite-ap} gives $I_w=\R$,
excluding the finite-lifespan alternative. The remaining alternatives
have frequency scale bounded away from zero. Including a fixed scaling
in $\widetilde g_n$, we obtain
\[
 N(t)\geq1\qquad(t\in\R).
\]
For every compact interval $J\subset\R$, stability gives
\[
 \lim_{n\to\infty}\sup_{t\in J}
 \bigl\|\widetilde g_nv(r_n+t/\widetilde\lambda_n^2)-w(t)
 \bigr\|_{\dot H^1}=0.
\]
Consequently, the energy, the kinetic bound, and
\eqref{eq:gap-passes-to-limit} pass to $w$:
\[
 E(w)=E(W),\qquad
 \sup_{t\in\R}\|\nabla w(t)\|_{L_x^2}\leq M,\qquad
 K(w(t))\leq-\frac{2\delta_0}{d-2}\quad(t\in\R).
\]
In particular, $w$ lies on the {upper sheet of the threshold surface} and blows
up in both time directions.

Following \cite[Appendix~A]{MMMZ25} and
\cite[Sections~6 and 7]{KV1}, for some $\varepsilon_1>0$ we have
\[
 w\in L_t^\infty\dot H_x^{-\varepsilon_1}
 \cap L_t^\infty L_x^2,
 \qquad N(t)\equiv1.
\]
Define $M(w)$ and $P(w)$ as in the proof of
Proposition~\ref{prop:ap-rigidity}.  We claim that $P(w)=0$.  If not,
set
\[
 \xi_0:=-\frac{P(w)}{2M(w)},\qquad
 \widetilde w(t,x):=e^{ix\cdot\xi_0-it|\xi_0|^2}
 w(t,x-2t\xi_0).
\]
Then
\[
 \|\nabla\widetilde w(t)\|_{L_x^2}^2
 =\|\nabla w(t)\|_{L_x^2}^2-\frac{|P(w)|^2}{4M(w)},
 \qquad
 E(\widetilde w)=E(W)-\frac{|P(w)|^2}{8M(w)}<E(W).
\]
The solution $\widetilde w$ blows up in both time directions.  By
Theorem~\ref{thm:KM}(i), it lies {in the interior of the upper cone}.
It is therefore excluded by
Theorem~\ref{thm:general-subthreshold}(ii).  Hence $P(w)=0$.

Negative regularity and almost periodicity give the spatial tightness estimate \eqref{eq:critical-L2-compactness}, with $w$ in place of $v$.  The truncated-position
argument in the proof of Proposition~\ref{prop:ap-rigidity} then gives
\[
 x(t)=o(t)\qquad(t\to\infty).
\]
Note that $|M_R(f)|\lesssim R\|\nabla f\|_{L_x^2}\|f\|_{L_x^2}$, we have
\[
 \sup_{t\in\R}|M_R(w(t))|\lesssim_w R.
\]
Let $R=\varepsilon T$.  Integrating the localized virial identity on
$[0,T]$ and using \eqref{eq:gap-passes-to-limit} gives
\[
 \frac{16\delta_0}{d-2}T
 \leq C_w\varepsilon T+\int_0^T|F_R(w(t))|\,dt.
\]
As in the proof of Proposition~\ref{prop:ap-rigidity}, almost
periodicity and $x(t)=o(t)$ imply
\[
 \int_0^T|F_R(w(t))|\,dt
 \lesssim_w T_{\varepsilon,\eta}
 +\left(\eta+\eta^{\frac{d-2}{d}}\right)T.
\]
Dividing by $T$, letting $T\to\infty$, and then choosing
$\varepsilon,\eta$ small yields a contradiction.  This proves
\eqref{eq:sequential}.
\end{proof}

We now complete the proof of Theorem~\ref{thm:general-threshold}(ii). We begin with a lemma that controls the variation of $\delta$ on an interval by the scattering size of that interval.

\begin{lemma}\label{lem:delta-variation}
Let $u$ satisfy the assumptions of Theorem \ref{thm:general-threshold}(ii), and let $J\subset[0,\infty)$ be
an interval such that $S_J(u)\leq L$.  There exist $c(L,M)>0$ and
$C(L,M)\geq1$ such that
\begin{equation}\label{eq:delta-variation}
\inf_{t\in J}\delta(u(t))\leq c(L,M)
\quad\Longrightarrow\quad
\sup_{t\in J}\delta(u(t))\leq C(L,M)\inf_{t\in J}\delta(u(t)).
\end{equation}
\end{lemma}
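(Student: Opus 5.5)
Fix $t_0\in J$ with $\delta(u(t_0))$ close to $\inf_J\delta$, and compare $u$ on $J$ with the nearby ground state through the linearized flow. If $c(L,M)\le\delta_0$ is small, Lemma~\ref{modula} applies at $t_0$ and gives $u(t_0)=\widetilde W_0+h_0$, where $\widetilde W_0\in\mathcal W$ is a fixed static solution and $\|h_0\|_{\dot H^1}\sim\delta(u(t_0))$. By the scaling and translation invariance of all the quantities involved, we may normalize $\widetilde W_0=W$ and rescale $J$ accordingly; this changes neither $S_J(u)$ nor $\delta$.

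The core step is a uniform perturbative bound. Write $u=W+h$ on $J$. Then $h$ solves
\[
i\partial_t h+\Delta h=-\bigl(|W+h|^{\frac4{d-2}}(W+h)-W^{\frac{d+2}{d-2}}\bigr),
\]
whose right-hand side is linear in $h$ with coefficients built from $W$, plus superlinear terms. Because $S_J(W)=\infty$ on long intervals, the standard stability Lemma~\ref{pertu} cannot be applied with $\widetilde u=W$ directly. Instead I would use the hypothesis $S_J(u)\le L$ together with $\sup_J\|\nabla u\|_2\le M$. Partition $J$ into $O_{L}(1)$ subintervals $J_k$ on which $\|u\|_{L^{2(d+2)/(d-2)}_{t,x}(J_k)}$ is small. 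On each $J_k$, apply the Strichartz estimate (Proposition~\ref{prop1}) to the difference equation. The linear coefficient $|W|^{4/(d-2)}$ is controlled in $L^{(d+2)/2}_{t,x}$ only by $|J_k|$, so this bound is not uniform. The better route is to run the difference estimate between $u$ and the solution $\tilde v$ starting from $\widetilde W_0+0$. Since $\tilde v=W$ is stationary, that requires a bounded scattering size, which again fails.

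So I would switch to a contradiction/compactness argument. Suppose there are solutions $u_n$ satisfying the hypotheses, intervals $J_n$ with $S_{J_n}(u_n)\le L$, $\inf_{J_n}\delta(u_n)=\epsilon_n\to0$, but $\sup_{J_n}\delta(u_n)/\epsilon_n\to\infty$. By Lemma~\ref{modula} the modulated $u_n(t_{0,n})$ converges to $W$ in $\dot H^1$. Since $S(W)=\infty$ on any infinite interval while $S_{J_n}(u_n)\le L$, stability with $\widetilde u=W$ on the rescaled interval forces the rescaled length $|J_n|\lambda_n^2$ to stay bounded by some $T(L)$. Indeed, on an interval of length $T$ the quantity $S(W)$ grows linearly in $T$, and near-closeness to $W$ transfers this growth to $u_n$. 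With the rescaled length bounded by $T(L)$, Lemma~\ref{pertu} now applies with $\widetilde u=W$ on an interval of bounded length (finite $S$, energy bounded by $M$). Linearizing, i.e.\ applying Strichartz to $h_n$ over bounded time, gives $\sup_{J_n}\|h_n\|_{\dot H^1}\le C(T(L),M)\|h_n(t_{0,n})\|_{\dot H^1}$ by a Gronwall-type bound over $O(T)$ short steps. Combined with $\delta(u)\lesssim\|h\|_{\dot H^1}$ (the energy identity with $\|\nabla u\|^2-\|\nabla W\|^2=2(W,h)_{\dot H^1}+\|h\|^2_{\dot H^1}$) and $\|h(t_0)\|\sim\delta(u(t_0))$, this yields $\sup_J\delta\le C\inf_J\delta$, contradicting the assumed blow-up of the ratio.

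The main obstacle is the step bounding the rescaled length of $J$. It requires a quantitative lower bound of the form $S_{[0,T]}(u)\gtrsim T$ whenever $u$ stays $\dot H^1$-close to $W$ on $[0,T]$. A continuity argument gives this: as long as $\|h\|\le\eta$ we have $\|u\|_{L^{2^*}}\gtrsim1$, so $S$ grows at unit rate in rescaled time. Since $\delta$ is initially $\le c(L,M)$ and can grow by at most a factor $e^{CT}$ within rescaled time $T\lesssim L$, the smallness persists throughout when $c(L,M)$ is chosen small. This continuity/bootstrap closes the argument, and I would carry it out directly rather than by contradiction.
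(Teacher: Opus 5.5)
Your final, direct argument is correct, but it is organized differently from the paper's (the earlier linearization and contradiction sketches are superseded by it). The paper never freezes the frame. On the component of $\{t\in J:\delta(u(t))<\delta_0\}$ containing $t_0$, it combines $|\alpha'|\lesssim\lambda^2\delta\sim\lambda^2|\alpha|$ with the pointwise bound $\|u(t)\|_{L_x^p}^p\gtrsim\lambda(t)^2$, $p=\frac{2(d+2)}{d-2}$, obtained by H\"older on a fixed annulus after modulation. This gives $\bigl|\tfrac{d}{dt}\log|\alpha(t)|\bigr|\lesssim\lambda(t)^2$ and $\int\lambda(t)^2\,dt\lesssim S_J(u)\le L$ over that component. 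Integrating yields $\delta(u(t))\le Ce^{CL}\delta(u(t_0))$, and a continuity argument shows the component is all of $J$.

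You instead fix the symmetry parameters at $t_0$. You bound the rescaled length of $J$ by $O(L)$, because the scattering size accrues at unit rate near the static $W$. You then propagate $\|u(t)-W\|_{\dot H^1}\le e^{CL}\|u(t_0)-W\|_{\dot H^1}$ by a linearized Strichartz--Gronwall argument. Both routes give $C(L,M)\sim e^{CL}$.

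The paper's route is shorter and uses only the modulation estimates already available. It also handles a drifting scale automatically, since its clock is $\int\lambda^2\,dt$ rather than elapsed time. Yours is a generic perturbative principle, but it needs a Lipschitz stability estimate around $W$ on bounded time intervals. That is not Lemma~\ref{pertu} as stated, which is only of $\varepsilon$--$\delta$ type. It also requires checking that the derivative of the nonlinear remainder at $W$ is superlinear in $h$, despite the limited smoothness of $|u|^{\frac4{d-2}}u$. Alternatively, once stability keeps $u$ near the frozen $W$, you have $\lambda(t)\sim1$, and the modulation bound gives the linear growth directly; that is exactly the paper's mechanism.

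Two details in your write-up need correcting.

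\emph{The unit-rate growth of $S$.} A global bound $\|u(t)\|_{L^{2^*}}\gtrsim1$ does not make $S$ grow at unit rate. Under the $\dot H^1$-invariant dilation $f\mapsto\mu^{-\frac{d-2}{2}}f(\cdot/\mu)$, the quantity $\|f\|_{L^p}^p$ scales like $\mu^{-2}$. So a solution near the ground-state manifold at a large spatial scale has small $L^p_x$ norm. The correct input is localized: $\|u(t)-W\|_{\dot H^1}\le\eta$, with the frozen $W$, gives $\|u(t)\|_{L^{2^*}(|x|\le1)}\gtrsim1$. H\"older on the unit ball then gives $\|u(t)\|_{L^p_x}\gtrsim1$.

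\emph{The bootstrap quantity.} For the same reason, the bootstrap must be run on $\|u(t)-W\|_{\dot H^1}$, not on $\delta(u(t))$ as your last paragraph suggests. The quantity $\delta$ is symmetry-invariant, so its smallness does not prevent the scale from drifting, and scale drift would destroy the unit-rate lower bound.
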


\begin{proof}
Put $p:=2(d+2)/(d-2)$. We first observe that, whenever
$0<\delta(u(t))<\delta_0$,
\begin{equation}\label{eq:delta-variation-scale-size}
 \|u(t)\|_{L_x^p}^p\gtrsim\lambda(t)^2
\end{equation}
for almost every $t$. Indeed, by Lemma~\ref{modula} and a sufficiently
small choice of $\delta_0$, the normalized function
$u_{[\theta(t),\lambda(t),x(t)]}(t)=W+v(t)$ satisfies
\[
 \|W+v(t)\|_{L_x^2(1\leq|x|\leq2)}\geq c>0.
\]
H\"older's inequality on this fixed annulus and scaling give
\eqref{eq:delta-variation-scale-size}.

Choose $t_0\in J$ with
\[
 \delta(u(t_0))\leq\inf_{t\in J}\delta(u(t))+\varepsilon,
\]
where $\varepsilon>0$ is sufficiently small. Let $J_0$ be the connected
component of $\{t\in J:\delta(u(t))<\delta_0\}$ containing $t_0$. On $J_0$,
\eqref{equivq1}, \eqref{Mod2}, and
\eqref{eq:delta-variation-scale-size} imply
\[
 \left|\frac{d}{dt}\log|\alpha(t)|\right|\lesssim\lambda(t)^2,
 \qquad
 \int_{J_0}\lambda(t)^2\,dt\lesssim S_J(u)\leq L.
\]
Here $\alpha$ does not vanish, since $|\alpha|\sim\delta(u)>0$.
Integrating in either time direction from $t_0$, we obtain
\[
 \delta(u(t))\leq C\exp(CL)\delta(u(t_0)).
\]
Choose $c(L,M)>0$ so small that the right-hand side is less than
$\delta_0/2$ whenever $\delta(u(t_0))\leq2c(L,M)$.
By continuity, the component is all of $J$. Letting
$\varepsilon\downarrow0$ proves the lemma.
\end{proof}

We now prove Theorem~\ref{thm:general-threshold}(ii).
\begin{proof}[{\bf Proof of Theorem~\ref{thm:general-threshold}(ii)}]
By Proposition~\ref{prop:sequential}, there exists an increasing
sequence $t_n\to\infty$ such that $\delta(u(t_n))\to0$.  We divide
the proof into two cases:
\begin{enumerate}
\item[(i)] There exists $t_0>0$ such that
\[
 \sup_{t\in[t_0,\infty)}\delta(u(t))\leq\eta_*.
\]
\item[(ii)] There exists an increasing sequence $t_n^-\to\infty$ such
that
\[
 \delta(u(t_n^-))>\eta_*.
\]
\end{enumerate}

In case (i), Corollary~\ref{t2.3}, after a time translation, gives
$u=W^+$ modulo symmetries.

Suppose that case (ii) holds.  Passing to a subsequence, we may assume
\[
 t_n^-<t_n<t_{n+1}^-,\qquad \delta(u(t_n))<\eta_*.
\]
Set
\[
 t_n^+:=\inf\left\{t\in[t_n^-,t_n]:
 \sup_{\tau\in[t,t_n]}\delta(u(\tau))<\eta_*\right\}.
\]
Then $t_n^+\to\infty$ and
\begin{equation}\label{eq:exit-level}
 \delta(u(t_n^+))=\eta_*.
\end{equation}
Lemma~\ref{lem:delta-variation} shows that
\begin{equation}\label{eq:large-transition-norm}
 S_{[t_n^+,t_n]}(u)\to\infty.
\end{equation}

By Proposition~\ref{prop:forward-compactness}, there exist symmetries $g_n$ and $w_0\in\dot H^1(\mathbb{R}^d)$ such that
\begin{equation}\label{eq:conditional-data}
 g_nu(t_n^+)\to w_0\quad\text{in }\dot H^1(\R^d),
 \qquad \delta(w_0)=\eta_*.
\end{equation}
Write $g_n=g_{\lambda_n,x_n}$ and let $w$ be the solution
with $w(0)=w_0$.  Define
\[
 u_n(t,x)
 :=\lambda_n^{-\frac{d-2}{2}}
 u\left(t_n^++\frac{t}{\lambda_n^2},
        \frac{x-x_n}{\lambda_n}\right).
\]
Then $u_n(0)\to w_0$ in $\dot H^1(\R^d)$.  Set
\[
 \sigma_n:=\lambda_n^2(t_n-t_n^+),
 \qquad \ell_n:=\lambda_n^2t_n^+.
\]
By scale invariance and \eqref{eq:large-transition-norm},
\begin{equation}\label{eq:rescaled-transition}
 S_{[0,\sigma_n]}(u_n)
 =S_{[t_n^+,t_n]}(u)\to\infty.
\end{equation}

By Lemma~\ref{lem:threshold-branch},
$S_{[0,t_n^+]}(u)\to\infty$. The divergence of the scattering sizes
on both sides, together with stability, implies that every compact
subinterval of $I_w$ is contained in $(-\ell_n,\sigma_n)$ for all
sufficiently large $n$. Otherwise, stability on a fixed compact
interval would bound one of these divergent scattering sizes.
Proposition~\ref{prop:forward-compactness} and stability therefore
show that $w$ is almost periodic and
\[
 \sup_{t\in I_w}\|\nabla w(t)\|_{L_x^2}\leq M.
\]
Lemma~\ref{lem:finite-ap} gives $I_w=\R$. The same compact-interval
coverage now implies
\[
 \sigma_n\to\infty,\qquad
 \ell_n\to\infty.
\]
Thus $w$ is a global almost-periodic {solution on the upper threshold sheet}
which blows up in both time directions.  The definition of $t_n^+$ gives
\[
 \delta(w(t))\leq\eta_*\qquad(t\geq0).
\]
Corollary~\ref{t2.3} shows that $w$ coincides with $W^+$ modulo a fixed
symmetry. Since $d\geq5$, $W^+$ blows up in finite negative time, as
recalled in the introduction. This contradicts $I_w=\R$. Thus case
(ii) is impossible.
{For the final assertion, the two endpoint bounds yield
\eqref{hyp:ceiling} after suitable time translations, respectively
for $u$ and $\overline{u(-t)}$. The preceding classification then gives
$I=\R$ and $u=W^+$ modulo symmetries, contradicting the finite backward
lifespan of $W^+$.}
This completes the proof.
\end{proof}

\appendix

\section{Finite variance of the special solution}\label{Wplus}

In this section, we show that the special solution $W^+$ has finite variance if and only if $d\geq7$.

\begin{proposition} Let $W^{+}:I\times\R^d\to\C$ be the solution to \eqref{1.1} as described in the introduction. Then
    $W^{+}(t)\in \Sigma$ for any $t\in I$ if and only if $d\geq7$.
\end{proposition}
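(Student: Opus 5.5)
We need to show that $W^+(t)\in\Sigma$, i.e. $W^+(t)\in L^2$ and $|x|W^+(t)\in L^2$, exactly when $d\ge7$. The plan is to use the structure of $W^+$ from \cite{DM08}: $W^+(t)=W+e^{-e_0 t}\mathcal Y_+ + h(t)$, where $\mathcal Y_\pm$ are the eigenfunctions of the linearized operator $\mathcal L$ with eigenvalues $\pm e_0$, and $\|h(t)\|\lesssim e^{-2e_0t}$ in suitable weighted/Strichartz spaces, for $t\ge t_0$. The condition $W^+(t)\in\Sigma$ is independent of $t\in I$. For the ``if'' direction this follows from persistence of regularity and of the virial weight: mass is conserved, and $\frac{d}{dt}\|xu\|_{L^2}^2$ is controlled by $\|xu\|_{L^2}\|\nabla u\|_{L^2}$ on compact subintervals. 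For the ``only if'' direction, we need that $\Sigma$ membership at one time propagates both forward and backward, by the same argument. So it suffices to work at a single large time $t$.

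\emph{Necessity.} We have $W\sim |x|^{-(d-2)}$ at infinity, so $|x|W\in L^2$ iff $2(d-3)>d$, i.e.\ $d>6$. Hence for $d\le6$ we must show the correction $W^+(t)-W$ cannot cancel this tail. We argue by contradiction. If $W^+(t)\in\Sigma$ for some $t$, Lemma \ref{unib} applies to $u=W^+$ (global forward, upper threshold sheet). It gives a uniform bound $\sup_{t\ge0}\|xW^+(t)\|_{L^2}<\infty$ and mass conservation. By Proposition \ref{2WW} and Corollary \ref{t2.3}, $W^+(t)\to W$ in $\dot H^1$, hence a.e.\ along a subsequence. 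Fatou then gives $\|xW\|_{L^2}\le\liminf\|xW^+(t_n)\|_{L^2}<\infty$, which is impossible for $d\le6$. This direction is clean: it avoids any fine knowledge of $\mathcal Y_+$.

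\emph{Sufficiency ($d\ge7$).} Here $W\in\Sigma$, since $W\in L^2$ for $d\ge5$ and $|x|W\in L^2$ for $d\ge7$. It remains to show that $W^+(t)-W\in\Sigma$ for large $t$. First, $\mathcal Y_\pm$ decay exponentially: they solve an elliptic system $(\Delta - V)\mathcal Y = \pm e_0 \mathcal Y$ with decaying potential and $e_0>0$, so Agmon-type estimates give $e^{c|x|}$-weighted bounds. In particular $\mathcal Y_+\in\Sigma$. The main obstacle is the remainder $h(t)$. In \cite{DM08} it is constructed by a fixed point in $\dot H^1$-type spaces, not weighted ones. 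I would redo the contraction in the space with norm $\sup_{t\ge t_0}e^{2e_0t}\bigl(\|h\|_{\dot S^1}+\|h\|_{L^\infty L^2}+\|xh\|_{L^\infty L^2}\bigr)$, using the Duhamel formula from $t=+\infty$ for the linearized equation. The weighted estimate comes from the commutator $[x,e^{it\Delta}]=2it\nabla e^{it\Delta}$; the resulting polynomial growth in $t$ is absorbed by the exponential decay $e^{-2e_0 t}$. The nonlinear terms are quadratic in $e^{-e_0t}\mathcal Y_++h$ with coefficients involving powers of $W$, which are bounded. Alternatively, and more simply, I would use the virial argument of Lemma \ref{unib} in reverse: show directly that $\|xW^+(t)\|_{L^2}$ stays finite by a localized Gronwall argument on $V_R(W^+(t)-W)$. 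This relies on the exponential $\dot H^1$ convergence $\delta(W^+(t))\lesssim e^{-c_0t}$ to control the time derivative uniformly in $R$, then letting $R\to\infty$. Finally, propagation in $t$ extends the conclusion to all $t\in I$.
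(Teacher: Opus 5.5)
Your necessity argument is the paper's. Lemma~\ref{unib}(i) gives $\sup_{t\ge0}\|xW^+(t)\|_{L^2}<\infty$, and lower semicontinuity along $W^+(t_n)\to W$ then forces $|x|W\in L^2$. Whether you use Fatou along an a.e.\ convergent subsequence or, as the paper does, weak compactness in $\Sigma$ is immaterial. One small correction: the convergence $\|W^+(t)-W\|_{\dot H^1}\lesssim e^{-c_0t}$ is part of the definition of $W^+$, not a consequence of Proposition~\ref{2WW} or Corollary~\ref{t2.3}.

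For sufficiency, your second option is a genuinely different route, and it works.

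\emph{The paper's route.} It argues on $W^+$ itself. The modulated virial bound gives $\frac{d}{dt}M_R(W^+(t))\le-\frac{8}{d-2}\delta(W^+(t))$ for $t\ge t_0$ and $R\gg1$. Since $M_R(W^+(t))\to0$, this forces $M_R(W^+(t))\ge0$. Hence $V_R(W^+(t))$ increases to $V_R(W)$, and $\||x|W^+(t_0)\|_{L^2}\le\||x|W\|_{L^2}$. This needs no convergence rate, but it uses modulation and the upper-sheet sign.

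\emph{Your route.} Your flux estimate for $h:=W^+-W$ needs neither, but it closes only because of a cancellation you should make explicit. One has
\[
\frac{d}{dt}V_R(h)=M_R(h)-2\Im\int_{\R^d}\varphi_R\,\bar h\,N(h)\,dx,\qquad
N(h):=|W+h|^{\frac4{d-2}}(W+h)-W^{\frac{d+2}{d-2}}.
\]
A crude bound on the last term produces $\int\varphi_R|h|^{2^*}\,dx$, which is not controlled uniformly in $R$. The pure power, however, drops out under $\Im$:
\[
\Im\bigl(\bar h\,N(h)\bigr)=\bigl(|W+h|^{\frac4{d-2}}-W^{\frac4{d-2}}\bigr)\,W\,\Im\bar h .
\]
For $d\ge6$ this is $\lesssim W^{\frac4{d-2}}|h|^2$, and $\varphi_R^{1/2}W^{\frac4{d-2}}\lesssim|x|^{-1}$. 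Combine this with $|\nabla\varphi_R|^2\lesssim\varphi_R$ (Glaeser's inequality for the nonnegative $\varphi$), Cauchy--Schwarz and Hardy. This gives $\bigl|\frac{d}{dt}V_R(h)\bigr|\lesssim V_R(h)^{1/2}\|h\|_{\dot H^1}$ uniformly in $R$. Since $V_R(h(t))\to0$ as $t\to\infty$ for fixed $R$, integrating yields
\[
\||x|\,h(t_0)\|_{L^2}\lesssim\int_{t_0}^\infty\|W^+(t)-W\|_{\dot H^1}\,dt<\infty .
\]
This requires integrability of $\|W^+(t)-W\|_{\dot H^1}$. The bound $\delta(W^+(t))\lesssim e^{-c_0t}$ you quote is not enough, since it only measures distance to the orbit $\mathcal W$.

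\emph{Comparison.} Your route gives a quantitative, sign-independent variance bound for $W^+-W$; for $d\ge6$ it applies equally to $W^-$. The paper's route gives the clean comparison with $W$ without any rate.

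Drop your first option: as written it does not close. With the free Duhamel formula, the linear term $W^{\frac4{d-2}}h$ is not perturbative at decay rate $e^{-2e_0t}$; this is why \cite{DM08} pass to high-order approximate solutions. With the linearized flow instead, the commutator identity for $[x,e^{it\Delta}]$ is no longer available. Moreover, for $d\ge7$ the nonlinearity is not $C^2$, so the expansion itself needs extra care.
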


\begin{proof}

We first prove $W^+(t)\notin \Sigma$ when $d\leq 6$. We argue by contradiction. Suppose this fails, then as $W^+$ solves \eqref{1.1}, $W^{+}(t)\in \Sigma$ for all $t\in I$. Moreover, by Lemma~\ref{unib} (i),
\[
\sup_{t\geq0}\|xW^+(t)\|_{L_x^2}<\infty.
\]
Mass conservation, the preceding variance bound, and convergence in
$\dot H^1(\R^d)$ imply that $\{W^+(t):t\geq0\}$ is bounded in $\Sigma$.
Since $\Sigma$ is a Hilbert space, there are $t_n\to\infty$ and
$\phi\in\Sigma$ such that
\[
 W^+(t_n)\rightharpoonup\phi\qquad\text{weakly in }\Sigma.
\]

We recall that there exists $c_0>0$ such that
\begin{equation}\label{app1}
    \|W^+-W\|_{\dot{H}^1}\lesssim e^{-c_0t} \qtq{for all}t\geq0.
\end{equation}
Therefore, $\phi=W$, which contradicts the fact that $W\notin\Sigma$.

It remains to prove that $W^+(t)\in\Sigma$ when $d\geq7$.  It suffices
to find $t_0\in I$ such that $W^+(t_0)\in\Sigma$.

Let $V_R$ and $M_R$ be as in
\eqref{eq:localized-virial-definitions}.  By
\eqref{eq:localized-variance},
\[
 \frac{d}{dt}V_R(W^+(t))=M_R(W^+(t)).
\]
By \eqref{app1} and Lemma~\ref{modula}, the modulation parameters
stay close to those of $W$ for $t\geq t_0$, with $t_0\gg1$ fixed.
Applying \eqref{VED} to the modulated ground
state and using \eqref{equivq1} and
\eqref{thlocalv}, we obtain
\[
\frac{d}{dt}M_R(W^+(t))
\leq-\frac{8}{d-2}\delta(W^+(t))
\]
uniformly for all sufficiently large $R$ and all $t\geq t_0$.
Moreover, \eqref{rac0} gives, for fixed $R$,
\[
M_R(W^+(t))\to 0\qtq{as}t\to\infty.
\]
Hence, by the fundamental theorem of calculus,
\[
M_R(W^+(t))
\geq \int_t^{\infty}\frac{8}{d-2}\delta(W^+(s))\,ds>0,
\qquad t\geq t_0.
\]
Thus $V_R(W^+(t))$ is strictly increasing on $[t_0,\infty)$ for all
$R\gg1$.  For fixed $R\gg1$,
\[
V_R(W^+(t_0))\leq\lim_{t\to\infty}V_R(W^+(t))=V_R(W).
\]
Since $W\in\Sigma$ for $d\geq7$, letting $R\to\infty$ yields
\[
\int_{\R^d}|xW^+(t_0,x)|^2\,dx
\leq\liminf_{R\to\infty}V_R(W^+(t_0))
\leq\int_{\R^d}|xW(x)|^2\,dx<\infty,
\]
and hence
$W^+(t_0)\in\Sigma$ as desired.
\end{proof}


\begin{thebibliography}{99}

\bibitem{Au76} T. Aubin. \emph{\'{E}quations diff\'erentielles non lin\'eaires et probl\`eme de {Y}amabe concernant la courbure scalaire}. J. Math. Pures Appl. (9), \textbf{55}(3):269--296, 1976.

\bibitem{Bonica} V. Banica, \emph{Remarks on the blow-up for the Schr\"odinger equation with critical mass on a plane domain}, Ann. Sc. Norm. Super. Pisa Cl. Sci. (5) \textbf{3} (2004), no. 1, 139--170.

\bibitem{CFR22} L. Campos, L. G. Farah, and S. Roudenko, \emph{Threshold solutions for the nonlinear Schr\"odinger equation.} Rev. Mat. Iberoam. \textbf{38} (2022), no. 5, 1637--1708.

{\bibitem{CH} P. Chung and D. Han, \emph{Threshold scattering for the three-dimensional focusing energy-critical nonlinear Schr\"odinger equation.} Preprint, arXiv: 2610.10658.}

\bibitem{dodson1} B. Dodson, \emph{Global well-posedness and scattering for the focusing, cubic Schr\"odinger equation in dimension $ d = 4 $.} Ann. Sci. Ecole Norm. S., serie 4, \textbf{52} (2019), 139--180.

\bibitem{DHR08} T. Duyckaerts, J. Holmer, and S. Roudenko, \emph{Scattering for the non-radial 3D cubic nonlinear Schr\"odinger equation.} Math. Res. Lett. \textbf{15} (2008), 1233--1250. 

\bibitem{DM08} T. Duyckaerts and F. Merle, \emph{Dynamic of threshold solutions for energy-critical NLS.}
Geom. Funct. Anal. \textbf{18} (2009), no. 6, 1787--1840.

\bibitem{DR10} T. Duyckaerts and S. Roudenko, \emph{Threshold solutions for the focusing 3D cubic Schr\"odinger equation.} Rev. Mat. Iberoam. \textbf{26} (2010), no.~1, 1--56.


\bibitem{GinibreVelo} J. Ginibre and G. Velo, \emph{Smoothing properties and retarded estimates for some dispersive evolution equations.} Comm. Math. Phys. \textbf{144} (1992), 163--188.

\bibitem{KeelTao}  M. Keel and T. Tao, \emph{Endpoint Strichartz estimates.} Amer. J. Math. \textbf{120} (1998), no. 5, 955--980.

\bibitem{kenig-merle1} C. E. Kenig and F. Merle, \emph{Global well-posedness, scattering and blow up for the energy-critical, focusing, nonlinear Schr\"odinger equation in the radial case.} Invent. Math. \textbf{166} (2006), 645--675.

\bibitem{Keraani} S. Keraani, \emph{On the defect of compactness for the Strichartz estimates of the Schr\"odinger equations.} J. Differential Equations \textbf{175} (2001), no. 2, 353--392.



\bibitem{KVnote} R. Killip and M. Vi\c{s}an, \emph{Nonlinear Schr\"odinger equations at critical regularity.} Lecture notes prepared for Clay Mathematics Institute Summer School, Z$\ddot{u}$rich, Switzerland, 2008.

\bibitem{KV1} R. Killip and M. Vi\c{s}an, \emph{The focusing energy-critical nonlinear Schr\"odinger equation in dimensions five and higher.} Amer. J. Math., \textbf{132} (2010), 361--424.

\bibitem{LZ09} D. Li and X. Zhang, \emph{Dynamics for the energy critical nonlinear Schr\"odinger equation in high dimensions.} J. Funct. Anal. \textbf{256} (2009),  1928--1961.

\bibitem{MMMZ25} Z. Ma, C. Miao, J. Murphy, and J. Zheng, \emph{Dynamics of subcritical threshold solutions for the 4d energy-critical NLS.} To appear in Anal. PDE.

\bibitem{MR08} F. Merle and P. Rapha\"el, \emph{Blow up of the critical norm for some radial $L^2$ super critical nonlinear Schr\"odinger equations.} Amer. J. Math. \textbf{130} (2008), no. 4, 945--978.


\bibitem{Strichartz} R. Strichartz, \emph{Restrictions of Fourier transforms to quadratic surfaces and decay of solutions of wave equations.} Duke Math. J. \textbf{44} (1977), no. 3, 705--714.

\bibitem{SZ} Q. Su and Z. Zhao, \emph{Scattering below the ground state for the three-dimensional focusing energy-critical NLS.} Preprint, arXiv: 2609.30228.

\bibitem{Ta76} G. Talenti, Best constant in {S}obolev inequality.  Ann. Mat. Pura Appl. (4), \textbf{110} (1976), 
353--372.

\bibitem{Tao1} T. Tao and M. Vi\c{s}an, \emph{Stability of energy-critical nonlinear Schr\"odinger equations in high dimensions.} Electron. J. Differential Equations, \textbf{118} (2005), 1--28.

\bibitem{VisanOberwolfach}  M. Vi\c{s}an, \emph{Dispersive Equations.} In Dispersive equations and nonlinear waves. Oberwolfach Seminars, 45. Birkh\"auser/Springer, Basel, 2014. xii+312 pp.

\end{thebibliography}
\end{document}